\colorlet{darkblue}{blue!80!black}
\newtheorem*{rep@theorem}{\rep@title}
\newcommand{\newreptheorem}[2]{%
\newenvironment{rep#1}[1]{%
 \def\rep@title{#2 \ref{##1}}%
 \begin{rep@theorem}}%
 {\end{rep@theorem}}}
\numberwithin{equation}{section}
\newtheorem{theorem}{Theorem}[section]
\newtheorem{lemma}[theorem]{Lemma}
\newtheorem{proposition}[theorem]{Proposition}
\newtheorem{corollary}[theorem]{Corollary}
\newtheorem{conjecture}[theorem]{Conjecture}
\newtheorem{mainthm}{Theorem}
\theoremstyle{definition}
\newtheorem{remark}[theorem]{Remark}
\newtheorem{definition}[theorem]{Definition}
\newtheorem{example}[theorem]{Example}
\newtheorem{question}[theorem]{Question}
\def\beq{\begin{eqnarray*}}
\def\eeq{\end{eqnarray*}}
\def\Q{\mathbb{Q}}
\def\R{\mathbb{R}}
\def\Z{\mathbb{Z}}
\def\N{\mathbb{N}}
\def\incl{\hookrightarrow}
\def\to{\rightarrow}
\def\id{\mathrm{id}}
\def\x{\times}
\def\d{\partial}
\def\phi{\varphi}
\def\Imm{\mathrm{Imm}}
\def\Homeo{\mathrm{Homeo}}
\def\Tot{\operatorname{Tot}}
\def\P{\mathcal{P}}
\def\K{\mathcal{K}}
\def\T{\mathcal{T}}
\def\Map{\mathrm{Map}}
\def\bx{\mathbf{x}}
\def\bt{\mathbf{t}}
\def\bu{\mathbf{u}}
\def\bL{\mathbf{L}}
\def\bK{\mathbf{K}}
\def\la{\langle}
\def\ra{\rangle}
\def\K{\mathcal{K}}
\def\SS{\mathfrak{S}}
\def\I{\mathbb{I}}
\def\CC{\mathcal{C}}
\def\tC{\widetilde{C}}
\def\tAM{\widetilde{AM}}
\def\PCact{\mathbb{P}\mathcal{C}act}
\def\Cact{\mathcal{C}act}
\def\Ov{\mathcal{O}v}
\def\tb{\widetilde{b}}
\def\quot{q}
\def\barE{\overline{E}}
\def\barK{\overline{\mathcal{K}}}
    \title{Cubes, cacti, and framed long knots}
    \author{Robin Koytcheff}
    \author{Yongheng Zhang}
    \email{koytcheff@louisiana.edu}
    \email{yzhang@amherst.edu}
    \address{Department of Mathematics, University of Louisiana at Lafayette, Lafayette, LA, USA 70504}
    \address{Department of Mathematics \& Statistics, Amherst College, Amherst, MA, USA 01002}
    \keywords{operads of cacti, little cubes operads, spaces of knots, Goodwillie--Weiss functor calculus, Taylor towers, configuration spaces, homotopy commutativity}
\subjclass[2020]{Primary: 57K10, 57K45, 18F50, 18M75, 55P48. 
  Secondary: 55R80, 57R40. }
\begin{document}

\begin{abstract}
We define an action of the operad of projective spineless cacti on each stage of the Taylor tower for the space of framed $1$-dimensional long knots in any Euclidean space.  By mapping a subspace of the overlapping intervals operad to the subspace of normalized cacti, we prove a space-level compatibility of our action with Budney's little $2$-cubes action on the space of framed long knots itself.  
Our result improves upon previous joint work of the first author related to the conjecture that the Taylor tower for classical long knots is a universal Vassiliev invariant over the integers.  As a corollary, we reprove the nontriviality of a certain Browder bracket class first detected by Sakai.
\end{abstract}

\maketitle

\vspace{-1pc}
\tableofcontents

\section{Introduction}

This paper concerns spaces of knots and operads.  Studying a space $\K$ of knots in $\R^d$ generalizes knot theory to higher dimensions and from isotopy classes to the space level.  We study the Taylor tower, which is a sequence of spaces
$T_n\K$ arising from the functor calculus of Goodwillie and Weiss \cite{Weiss:1996, Weiss:1999, Goodwillie-Weiss}.  
It approximates $\K$ much like a Taylor series approximates a smooth function, via maps $\K \to T_n\K$ which fit into the commutative diagram below:
\[
\xymatrix@R=0.5pc{ 
& & & \K \ar[ddlll] \ar[ddl] \ar[dd] \ar@{}[ddrrr]^(.05){}="a"^(.5){}="b" \ar "a";"b" & & &\\
&&\dots  &&\hspace{-1pc} \dots &&\\
\ast \simeq T_0\K & \dots \ar[l] &T_{n-1}\K \ar[l] & T_n\K \ar[l] & &\dots \ar[ll]  &
}
\]
An operad $\P$ consists of a sequence of spaces $\P(m)$ which parametrize operations with $m$ inputs and one output.  Our work mainly concerns $E_2$-operads, which are closely related to $2$-fold loop spaces.  A space with an action of an $E_2$-operad has a multiplication that is homotopy commutative, though two such homotopies may not admit a homotopy between them.  We develop an $E_2$-operad action on each $T_n\K$ where $\K$ is the space of framed $1$-dimensional long knots in $\R^d$.

Budney, Conant, Scannell, and Sinha \cite{BCSS:2005} conjectured that at the level of path components, the evaluation map ${ev}_{n+1}$ from the space $\K$ of long knots in $\R^3$ to $T_{n+1}\K$ is a universal additive abelian-group-valued finite-type (a.k.a.~Vassiliev) invariant of order $n$.
Around the same time, Voli\'c \cite{Volic:FT} established a homological version of this statement over $\Q$.
Later, work of Budney, Conant, Sinha, and the first author \cite{BCKS:2017} established that ${ev}_{n+1}$ is an additive abelian-group-valued finite-type invariant of order $n$, and combining this result with the homotopy spectral sequence for the tower provided more evidence for the conjecture.  Shortly afterwards, Kosanovi\'c \cite{Kosanovic:EmbCalc} showed that each evaluation map is surjective on path components and proved the conjecture rationally, while Boavida de Brito and Horel \cite{Boavida-Horel:Galois} showed that it is true up to $p$-torsion for primes $p$ less than $n-1$.

The main result in that joint work of the first author \cite{BCKS:2017} required two key steps.  One step, known to Conant for some time, was to establish that the evaluation maps respect the equivalence relation on knots induced by the Vassiliev filtration.  The other was to verify that $\pi_0 T_n\K$ is an abelian group and that $\pi_0({ev}_n)$ is a monoid homomorphism.  The latter step was achieved by constructing an $E_1$-operad action on each stage $T_n\K$, compatible with $ev_n$, as well as a somewhat ad hoc homotopy to establish homotopy commutativity.  Technically, that last step involved framed long knots.  Here we improve upon that result by promoting that $E_1$-operad action and homotopy commutativity to a bona fide $E_2$-operad action.  

Our new action on the Taylor tower stages is of an operad of cacti, 
specifically the operad of spineless cacti, due to Kaufmann \cite{Kaufmann:2005}, which is equivalent to the little $2$-disks operad.  
Despite this equivalence, it is unlikely that there is an easily defined operad map directly between the cacti operad and the little $2$-disks operad (or the little $2$-cubes operad).  Operads of cacti and their relatives have appeared in connection with string topology \cite{Voronov:2005} and moduli spaces of Riemann surfaces \cite{KLP}.  They have also been used in or connected to solutions of multiple variants of Deligne's conjecture on the Hochschild complex of an associative algebra \cite{Kontsevich-Soibelman, McClure-Smith:Deligne, McClure-Smith:2004, Kaufmann:2008, Salvatore:2009, Kaufmann-Schwell, Ward:2012}.  

We write $\K_d^{fr}$ for the space of framed $1$-dimensional long knots in $\R^d$.  
See Definition \ref{D:framed-long-knots} for details.  See Section \ref{S:mapping-space-models} 
for a description of the model that we use for $T_n \K_d^{fr}$.  This is our first main result:

\begin{mainthm}
\label{T:A}
For each $n\geq 0$ and each $d\geq 3$, the operad $\PCact$ of projective spineless cacti acts on 
the $n$-th stage $T_n \K_d^{fr}$ of the Taylor tower for $\K_d^{fr}$.  
This action is compatible with the projections $T_n \K_d^{fr} \to T_{n-1} \K_d^{fr}$.
The multiplication, Browder operation, and Dyer--Lashof operations in homology resulting from the action are compatible with those on $\K_d^{fr}$ from Budney's action of the little $2$-cubes operad on $\K_d^{fr}$.
\end{mainthm}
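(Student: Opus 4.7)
The plan is to address the three assertions of the theorem in turn. For the first, I will work with the mapping-space model of $T_n \K_d^{fr}$ from Definition \ref{D:mapping-space-models}, whose points are compatible families of maps from truncated cosimplicial source spaces, parametrizing finite subsets of the unit interval $\I$, into target spaces built from configurations in $\R^d$ equipped with tangential data. A spineless cactus in $\PCact(m)$ may be interpreted as a prescription for cutting the parametrizing interval at $m-1$ points and reassembling the pieces along a tree pattern. I will define the $\PCact$-action on the mapping-space model by precomposition on the source: the cut-and-paste operation is compatible with the cosimplicial face and degeneracy maps (which correspond to forgetting or doubling marked points), and the projective normalization built into $\PCact$ makes operad associativity and $\Sigma_m$-equivariance hold strictly on the nose.

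The second assertion, compatibility with the Taylor tower projections $T_n \K_d^{fr} \to T_{n-1} \K_d^{fr}$, follows essentially immediately from the construction: the cactus action reparametrizes the cosimplicial source uniformly in degree, so it commutes with the truncation that defines the projections.

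The third assertion is where the main technical content lies, and it is where the overlapping intervals operad $\Ovint$ previewed in the abstract enters as a bridge. The strategy is to construct an operad map from a suitable subspace $\Ov' \subset \Ovint$ to the subspace of normalized spineless cacti in $\PCact$, and to verify that Budney's little $2$-cubes action on $\K_d^{fr}$ admits a compatible presentation through $\Ov'$ via a standard operad equivalence between a subspace of overlapping intervals and the little $2$-cubes operad $\mathcal{C}_2$. Together these will yield a square
\[
\xymatrix{
\Ov'(m) \times (\K_d^{fr})^m \ar[r] \ar[d] & \K_d^{fr} \ar[d]^{ev_n} \\
\PCact(m) \times (T_n \K_d^{fr})^m \ar[r] & T_n \K_d^{fr}
}
\]
commuting at least up to the appropriate natural homotopies. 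Since $\mathcal{C}_2$, $\Ov'$, and $\PCact$ are all $E_2$-operads, the multiplication, Browder bracket, and Dyer--Lashof operations on either homology are represented by the images of a single fixed set of operadic homology classes. Commutativity of the square then forces the stated compatibility of the induced operations in homology under $(ev_n)_*$.

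I expect the main obstacle to be the first step: verifying that the cactus cut-and-paste action on the cosimplicial source interacts correctly with the tangential data and the full cosimplicial structure of the mapping-space model, and that projective normalization does produce a strict operad action rather than one only up to a tower of higher homotopies. Once that construction is in place, producing the operad map $\Ov' \to \PCact$ and checking the homological compatibility should follow from relatively standard operadic manipulations together with the universal characterization of homology operations on $E_2$-spaces.
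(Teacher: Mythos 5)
There is a genuine gap, and it is in the step you yourself flag as the main obstacle: the definition of the action. ``Precomposition on the source'' cannot by itself define an arity-$m$ operation: given $m$ aligned maps $\phi^1,\dots,\phi^m$ and a cactus $C$, cutting and reparametrizing the interval only tells you which $\phi^\ell$ governs which part of the parameter simplex; it does not tell you how to merge the $m$ output configurations into a single point of the target. The missing ingredient is the operad structure on the target itself: the paper's action (Definition \ref{D:action}) sends $\bt\in\Delta^n$ to an iterated operadic insertion $\bigcirc_{T(C,\bt)}$ in the framed Kontsevich operad $\{\tC_k^{fr}\la\R^d\ra\}$, with the tree $T(C,\bt)$ extracted from the cactus and the position of $\bt$ on its lobes, and with the source reparametrizations $\rho_\ell$, $S_\ell$, $\bt^\ell$ only supplying the inputs to that insertion. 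Relatedly, one is forced to use the infinitesimal model $\tAM_n^{fr}$ rather than the spatial one: continuity of the action as marked points slide past local roots depends on the fact that the extreme cofaces in $\tC^{fr}_\bullet\la\R^d\ra$ are insertions at $\pm\infty$ (Remark \ref{R:AM-behavior-at-pm-1}), and checking well-definedness, continuity, and strict associativity/equivariance there is the content of Theorem \ref{T:action}; it does not come for free from ``projective normalization.''

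The bridge to Budney's action also does not go the way you propose. The paper's $\Ov^1$ is only a symmetric sequence, not a suboperad of $\Ov$, and there is no operad map from it (or from $\CC_2$) to cacti in sight; Theorem \ref{T:B} only provides arity-wise $\SS_m$-equivariant homotopy equivalences $\CC_2(m)\to\Ov(m)\hookleftarrow\Ov^1(m)\to\Cact^1(m)\hookrightarrow\PCact(m)$. Consequently the homological compatibility cannot be delegated to ``a single fixed set of operadic homology classes'' transported along operad maps: one must show, arity by arity, that the square comparing Budney's action on $\K^{fr}_d$ with the cactus action on $\tAM_n^{fr}$ commutes up to ($\SS_m$-invariant, for Dyer--Lashof) homotopy. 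That homotopy is the real technical core (Theorems \ref{T:browder-compatible-space-level} and \ref{T:dyer-lashof-compatible-space-level}): Budney's action composes knots at finite relative scale, while the cactus action inserts configurations at infinitesimal scale, and the comparison requires an explicit shrinking-to-infinitesimal homotopy $H$ (possible for compactified configurations but not for knots), together with corrections for the anisotropic scaling of the cubes action, a Gram--Schmidt homotopy for the frames, and a final reparametrization. None of this is ``standard operadic manipulation,'' and without it the third assertion of the theorem is not established. Your second assertion (compatibility with the tower projections) is indeed the easy part and matches the paper's Proposition \ref{P:proj-compatible}.
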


Fully parsing the compatibility in Theorem \ref{T:A} requires the next result, which we view as our second main theorem.  We write $\SS_m$ for the symmetric group on $m$ letters.

\begin{mainthm}
\label{T:B}
There is a symmetric sequence $\Ov^1$ of {\em normalized overlapping intervals} with a map from the little $2$-cubes operad $\CC_2$ and a map to the operad $\PCact$ of projective spineless cacti, both of which are 
$\SS_m$-equivariant homotopy equivalences in each arity $m$:
\[
\CC_2(m) \xrightarrow{\simeq} \Ov^1(m) \xrightarrow{\simeq} \PCact(m).
\]
\end{mainthm}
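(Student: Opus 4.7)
My strategy is to define $\Ov^1$ as a normalized subspace of the overlapping intervals operad $\Ov$ and construct each of the two maps by forgetting a piece of auxiliary data whose space of choices is contractible. Concretely, let $\Ov^1(m)$ consist of configurations of $m$ labeled closed subintervals of $[0,1]$ whose union equals all of $[0,1]$ (the ``normalization''). The intervals are allowed to overlap pairwise, and the combinatorial pattern of overlaps will encode the cactus tree under the map to $\PCact$. The two maps of Theorem \ref{T:B} are then built purely from this geometric picture.

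For the map $\CC_2(m) \to \Ov^1(m)$, I would send a configuration of little $2$-cubes to the tuple of its horizontal projections, then apply a canonical rescaling of the first coordinate so that the union of the projected intervals fills $[0,1]$. This is $\SS_m$-equivariant by inspection since the construction does not use the labels. To see it is a homotopy equivalence, I would analyze the fiber over a given interval configuration: it is the space of vertical placements $(y_i,h_i)$ such that the cubes $I_i \times [y_i, y_i+h_i]$ have disjoint interiors, cut out by linear inequalities indexed by the overlap graph. This fiber is convex on each combinatorial type and contractible overall, so one obtains a section by a canonical vertical-stacking rule (e.g.\ based on a greedy coloring of the interval graph that depends only on the labels), and the round-trip composition is homotopic to the identity via a straight-line homotopy in the vertical coordinate.

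For the map $\Ov^1(m) \to \PCact(m)$, I would send a configuration of overlapping intervals to the cactus obtained by viewing the intervals as arcs glued along the points of $[0,1]$, with the overlap pattern determining the tree of lobes and the lengths of the intervals giving the arc lengths on the lobes. The normalization of $\Ov^1$ corresponds precisely to the projective normalization on $\PCact$. To prove this map is an $\SS_m$-equivariant homotopy equivalence, I would stratify both sides by the combinatorial type of the overlap pattern and show that on each open stratum the map is an affine homeomorphism onto the corresponding stratum of $\PCact$, with matching identifications along boundary strata.

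The main obstacle I expect is the careful bookkeeping along boundary strata where two intervals coincide, share an endpoint, or swap order, since on the $\PCact$ side this corresponds to transitions between different tree shapes (lobes appearing or collapsing). One must verify that the stratum-wise affine identifications glue continuously and that the canonical vertical stacking from the first map extends continuously across these degenerations. Once these compatibilities are in place, both maps are $\SS_m$-equivariant by construction — no step permutes labels — and the global homotopy equivalence assembles from the stratum-wise bijections via a standard filtration argument.
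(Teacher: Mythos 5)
The decisive gap is in your second map. Your plan rests on the claim that $\Ov^1(m)\to\PCact(m)$ is, stratum by stratum, an affine homeomorphism, and this is false for any rule of the kind you describe. A cactus cannot see data that is hidden underneath other intervals: already for $m=2$ with your covering normalization, the two distinct classes $[(\I,[a,b]),\sigma]$ in which the nested interval $L_2=[a,b]$ lies above or below $L_1=\I$ have identical intervals and identical overlap pattern, so your rule ("overlap pattern determines the tree, interval lengths give lobe lengths") assigns them the same cactus, while the topmost-interval rule would give the hidden lobe length zero and is not even defined there; similarly the endpoint of an interval concealed beneath another can move without changing the image. So the stratumwise maps are not jointly injective, fibers are positive-dimensional in general, and a "standard filtration argument from stratumwise bijections" cannot assemble an equivalence. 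This is not an artifact of your looser normalization: even with the much more rigid, tree-indexed normalization of Definition \ref{D:normalized-overlapping} (designed so that an interval must grow to full overlap before sliding past another), the point-preimages of the projection $p_m$ to cacti are positive-dimensional polyhedra — already $2$-dimensional over a corolla in arity $3$ (Figure \ref{F:cells-m=3}). The actual proof must therefore supply what your outline omits: a precise projection using only the visible gaps, landing in normalized cacti $\Cact^1(m)$ and then included into $\PCact(m)$ (Definition \ref{D:proj-to-cacti}), a regular CW structure making $p_m$ cellular (Theorem \ref{T:proj-is-cellular}), an inductive proof that all point-preimages are contractible (Lemma \ref{L:fibers-contractible}), and M.~Cohen's theorem that a simplicial map of finite complexes with contractible point-inverses is a simple homotopy equivalence (Theorem \ref{T:proj-is-htpy-eqv}). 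Note also that your "lobe length $=$ interval length" prescription is not obviously well defined or continuous: one must decide where the hidden length of a partially covered interval sits on its lobe and to which lobe it attaches as intervals slide past one another, which is exactly the delicate point.

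Your first map also needs repair, though the issues are less fatal. An affine rescaling of the first coordinate cannot make a disconnected union of horizontal projections equal to $\I$; you need a gap-collapsing reparametrization (which, for your covering normalization, is in fact well behaved, since collapsing complementary gaps never creates new overlaps and so is well defined on $\Ov(m)$-classes). But your proposed proof that the map is an equivalence — a section by a "greedy vertical-stacking rule depending only on the labels" plus contractibility of fibers — does not work: such a stacking rule is discontinuous across changes of the overlap graph, and contractible fibers alone do not yield a homotopy equivalence without additional structure. The paper does not attempt a direct formula here at all: it composes Budney's known equivalence $\CC_2(m)\to\Ov(m)$ with a retraction $r_m:\Ov(m)\to\Ov^1(m)$ obtained indirectly, by identifying the relevant $\SS_m$-quotients as $K(B_m,1)$'s, matching $\pi_1$-generators, applying the Whitehead theorem, and lifting through the covering $\Ov(m)\to\Ov(m)/\SS_m$ (Theorem \ref{T:Ov^1-eqv-to-Ov}). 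The underlying difficulty your outline does not confront is that naive expansion or stacking formulas are not well defined on $\Ov(m)$, because the relative heights of intervals with disjoint interiors are forgotten in the equivalence relation.
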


In more detail, for each $m$ we will have $\SS_m$-equivariant homotopy equivalences
\begin{equation}
\label{eq:equivalences-of-operad-spaces}
\CC_2(m) \xtwoheadrightarrow{} \Ov(m) \xhookleftarrow{} \Ov^1(m) \xtwoheadrightarrow{} \Cact^1(m) \xhookrightarrow{} \PCact(m)
\end{equation}
where $\Ov$ is the operad of overlapping intervals introduced by Budney, and where $\Cact^1$ denotes Kaufmann's symmetric sequence of normalized spineless cacti.  The two outer maps above and the fact that they are equivalences are not new, so the content of Theorem \ref{T:B} is that there is a symmetric sequence $\Ov^1$ with outgoing maps as in diagram \eqref{eq:equivalences-of-operad-spaces} which are $\SS_m$-equivariant homotopy equivalences.  
It is a purely operadic statement and may be of independent interest from this perspective.  
Indeed, Question \ref{Q:infinity-operad} concerns potential higher structures on $\Ov^1$.  In addition, the connection between cacti and overlapping intervals from diagram \eqref{eq:equivalences-of-operad-spaces} allows us to formulate Conjecture \ref{conj:action-on-Omega-G} on two potentially homotopic actions of $E_2$-operads on the based loop space $\Omega G$ of a topological group $G$. 
Beware that neither $\{\Ov^1(m)\}_{m\in \N}$ nor $\{\Cact^1(m)\}_{m\in \N}$ forms an operad.  
The purpose of the former is to be able to compare 
 the two actions in Theorem \ref{T:A} in any fixed arity.

Although the definition of the action in Theorem \ref{T:A} may seem unwieldy, a basic conceptual idea underlies it.  Trees appear in both operad composition and elements of the operad of cacti.  Each Taylor tower stage can be modeled by a space of maps between (suitably compactified) configuration spaces.  These spaces form an operad called the Kontsevich operad.  The map of configuration spaces resulting from the action of a cactus is defined by setting its output to be an iterated operad composition in the Kontsevich operad using the tree underlying the cactus.   
The difficulty in defining an action of $\CC_2$ or $\Ov$ on the Taylor tower is due to the fact that elements of these operads do not give rise to trees.  
The most important idea in proving the compatibility in Theorem \ref{T:A} is a homotopy that shrinks to infinitesimal size, which cannot be done at the level of knots but can be done at the level of compactified configuration spaces.  

As an application of Theorem \ref{T:A}, we recover a result of Sakai \cite{Sakai:Nontrivalent} that the Browder bracket of certain cycles in $\K_d^{fr}$ is nontrivial.  
Though one could possibly circumvent our action of cacti in defining a compatible bracket on $T_n\K_d^{fr}$ and thus in proving Theorem \ref{T:C},
we think the action of cacti provides a natural 
framework for our proof.

\begin{mainthm}
\label{T:C}
Let $d \geq 3$ be odd.  Let $e  \in H_{d-3}\K_d^{fr}$ and $f \in H_{2(d-3)}\K_d^{fr}$ be cycles which map to bracket expressions $[x_1,x_2]$ and $[x_1, x_3][x_2, x_4]$ in the homology spectral sequence for the cosimplicial model for $\K_d^{fr}$.  Then the Browder bracket $[e,f] \in H_{3d-8}\K_d^{fr}$ is nontrivial.
\end{mainthm}

Actually, Sakai's proof applies only to $d>3$, and he treats the case $d=3$ separately \cite{Sakai:Integral}.  In contrast, our proof involves only a small nuance for $d=3$.  On the other hand, for $d=3$, this result is merely the nontriviality of the Gramain loop of any long knot $f$ with nonzero Casson knot invariant, and the Gramain loop has long been known to be nontrivial for any nontrivial long knot \cite{Gramain:1977}.

Finally, we mention potentially related work.
Salvatore \cite{Salvatore:Knots} defined an $E_2$-action on a model of 
$T_\infty\K_d^{fr}$, which for $d\geq 4$ is equivalent to $\K_d^{fr}$.   
He did this by building on work of Sinha \cite{Sinha:Operads} on long knots modulo immersions, which in turn relied on work of McClure and Smith \cite{McClure-Smith:2004}.  
We leave for potential future work the conjecture that Salvatore's action agrees with Budney's; 
Theorem \ref{T:A} gives hope for a proof, since the McClure--Smith constructions are related to cacti.  
Similarly, connections between our work and double deloopings of the space of long knots modulo immersions by Dwyer and Hess \cite{Dwyer-Hess:Knots} and Turchin \cite{Turchin:Delooping} (the latter of which also applies to its Taylor tower stages) are not explored here.


\subsection{Organization of the paper}
%
In Section \ref{S:operads}, we review trees and operads; the operad of overlapping little cubes and its action on spaces of framed long knots; and symmetric sequences and operads of spineless cacti, including a cellular structure on their spaces, which uses the link between cacti and trees.
In Section \ref{S:Taylor-tower}, we review the mapping space models that we use for the $n$-th Taylor tower stage.  
We describe a map from the spatial to the infinitesimal model, which refines the previous joint work of the first author \cite{BCKS:2017}.  We use this map to define the evaluation map to the infinitesimal model.
In Section \ref{S:action-def}, we establish part of Theorem \ref{T:A} by defining an action of cacti on each Taylor tower stage and proving that it satisfies the required conditions.
In Section \ref{S:normalized-overlapping}, we prove Theorem \ref{T:B} by first defining the space of normalized overlapping intervals and a map to the space of normalized cacti.  We then show that this map and the inclusion into the space of overlapping intervals are equivalences.  We pose questions related to this structure.
In Section \ref{S:compatibility}, we complete the proof of Theorem \ref{T:A} by using the action maps and the maps from Theorem \ref{T:B}.  We show that the relevant diagrams commute up to homotopy by constructing such  homotopies.  
In Section \ref{S:applications}, we prove Theorem \ref{T:C} which recovers the result of Sakai that a certain Browder bracket in the space of framed long knots is nontrivial.  
In Section \ref{S:future}, we briefly outline how one might generalize that proof to other examples.

\subsection{Acknowledgments}
The first author was supported by the Louisiana Board of Regents Support Fund, Contract Number LEQSF(2019-22)-RD-A-22 and by the National Science Foundation, Award No.~DMS-2405370.  
He thanks Dev Sinha for directing him to and sharing the PhD thesis of Pelatt.
The second author benefitted from conversations with Michael Ching, David Gepner, Philip Hackney, Ralph Kaufmann, Jason Lucas, James McClure, Jeremy Miller, Alexandru Suciu, Botong Wang, and Benjamin Ward.  Graphics were produced using Inkscape and Mathematica.  We thank an anonymous referee for helpful suggestions that have improved the quality of the paper.

\section{Operads and related structures}
\label{S:operads}

We now review the operads most central to this work.  
In Section \ref{S:trees-and-operads}, we start with trees, which appear in both operad structures and elements of the operad of cacti; we then describe the connection between operads and trees.  The overlapping intervals operad and its action on the space of framed long knots are reviewed in Section \ref{S:ov-int-knots}.   Several variants of spaces of spineless cacti are described in Section \ref{S:spineless-cacti}, and two approaches to them are covered.  Cellular structures on spaces of cacti are reviewed in Section \ref{S:CW-structure-on-cacti}, where a certain tree is seen to underlie any cactus.

\subsection{Planted trees and operads}
\label{S:trees-and-operads}

\subsubsection{Planted trees}
\label{S:trees}

A {\em tree} is a finite contractible graph.  
A {\em rooted tree} is a tree $T$ with a choice of distinguished vertex $r$, called the {\em root}.  
One can view a rooted tree as a directed graph by directing each edge away from $r$.  
We consider rooted trees with a linear ordering of the set of outgoing edges at each vertex.
This data is the same as an isotopy class of embedding the tree into the plane.  
We call such a tree a {\em planted tree}, and we draw such trees in the upper half-plane with the root lying on the horizontal axis.  
Some authors use the terms {\em ordered tree} or {\em planar rooted tree} instead.

Let $v$, $v_1$, and $v_2$ be vertices in a planted tree $T$.  We say $v_2$ lies {\em above} $v_1$ (and $v_1$ lies {\em below} $v_2$) if the path from $r$ to $v_2$ passes through $v_1$.  We say $v_2$ lies {\em directly above} $v_1$ (and $v_1$ lies {\em directly below} $v_2$) if there is an edge directed from $v_1$ to $v_2$.  If $v$ has an outgoing edge to $v_1$ ordered before an outgoing edge to $v_2$, we say $v_1$ lies to the {\em left} of $v_2$ (and $v_2$ lies to the {\em right} of $v_1$).  A {\em branch} starting at $v$ is the subgraph of $T$ on the vertices whose path from the root goes through $v$.  The {\em arity} of $v$, denoted $|v|$, is the number of outgoing edges from $v$.  Planted trees will appear in two contexts below, each with its own set of extra data or requirements.

\subsubsection{Operads}
\label{S:operads-sub}

We assume familiarity with operads and operad actions. 
We view a (topological) operad $\P$ as a collection of spaces $\{\P(m)\}_{m \in \N}$ with a right action of the symmetric group $\SS_m$.  We write $\circ_i$ for the {\em insertion} or {\em partial composition} maps $\P(m) \x \P(k) \to \P(m+k-1)$, where $i \in \{1\dots, m\}$.  
We need to also view an operad as a functor from a category of trees.

\begin{definition}
\label{D:upsilon}
Let $\Upsilon$ be the following category.
An object is a planted tree with a univalent root and a designation of some non-root univalent vertices as {\em leaves}; all other vertices (including any remaining univalent vertices) are called {\em internal vertices}.
A morphism collapses any number of edges not incident to the root or any leaf.
\end{definition}

An operad $\P$ gives rise to a functor $\P: \Upsilon \to \T op$ which encodes all the information of $\P$ except for the symmetric group actions.   
On objects, it is given by
\[
\P: T \mapsto \prod_{i} \P(k_i)
\]
where the product is taken over internal vertices $i$ of $T$, and where $k_i = |i|$.  
A morphism that collapses a single edge is sent to the product of an insertion operation $\circ_j$ with identity maps on the factors associated to vertices not involved in the collapse.  
For any tree $T$ with $m$ leaves, there is a morphism from $T$ to the {\em corolla} $\gamma_m$, which is the tree with  a root, one internal vertex, and $m$ leaves.  It is unique by the associativity condition on an operad.
Define $\bigcirc_T$ as the associated iterated composition associated to this morphism:
\begin{equation}
\label{Eq:big-circ-T}
(T \to \gamma_m) \overset{\P}{\longmapsto}  \left( \bigcirc_T: \prod_i \P(k_i) \to \P(m)\right).
\end{equation}

More details on operads can be found in general references on the subject \cite{goils, MarklShniderStasheff, McClure-Smith:Intro, Fresse:HtpyOfOperads}, as well as in references more specific to our focus \cite{Sinha:Operads, Salvatore:2009, Budney:Cubes, Budney:Splicing}.

\subsection{The overlapping intervals operad and its action on long knots}
\label{S:ov-int-knots}

We assume familiarity with the little $d$-cubes operad $\CC_d$ and the fact that an action of it induces the Browder operation and Dyer--Lashof operations.  We review Budney's \cite{Budney:Splicing} operad $\CC_d'$ of overlapping $d$-cubes for $d=1$, where we use instead the notation $\Ov$, as well as its action on $\K_d^{fr}$.  Let $\I:=[-1,1]$.

\begin{definition}
\label{D:ov-int}
The operad $\Ov$ of {\em overlapping intervals} is defined as follows. 
The space $\Ov(m)$ consists of equivalence classes $[\mathbf{L}, \sigma]=[(L_1, \dots, L_m), \sigma]$ where $\sigma \in \SS_m$ and each $L_i$ is an orientation-preserving affine-linear embedding $\I \incl \I$.
The equivalence relation is that
\[
((L_1, \dots, L_m), \sigma) \sim ((J_1, \dots, J_m), \tau)
\]
\begin{quote}
if for all $i\in \{1,\dots, m\}$, $L_i= J_i$ and for all $i,j\in \{1,\dots,m\}$, $L_i(\mathring{\I}) \cap L_j(\mathring{\I}) \neq \varnothing$ implies that $\sigma^{-1}(i) < \sigma^{-1}(j)$ if and only if $\tau^{-1}(i) < \tau^{-1}(j)$.
\end{quote}
The right action of $\sigma \in \SS_m$  on $\Ov(m)$ is given by $\sigma [(L_1, \dots, L_m), \tau] := [(L_{\sigma(1)}, \dots, L_{\sigma(m)}), \sigma^{-1} \circ \tau]$.
Roughly, composition in $\Ov$ is given by composition of the embeddings of intervals and composition of the block permutation of $k_1 + \dots + k_m$ given by the element of $\SS_m$ with the permutations in $\SS_{k_i}$.
In precise terms, composition in $\Ov$ is defined by 
\begin{align*}
[(L_i)_{i=1}^m, \sigma] \x [(J^1_i)_{i=1}^{k_1}, \tau_1] \x \dots \x [(J^m_i)_{i=1}^{k_m}, \tau_m] \longmapsto 
[(L_i \circ J^i_j)_{(i,j) = (1,1)}^{(m, k_i)}, \  \beta]
\end{align*}
where $\beta$ is the permutation defined for $1\leq i \leq m$ and $1 \leq j \leq k_i$ by 
\[
\beta^{-1} \left( \sum_{\ell < i} k_i  + j \right) := 
\sum_{\ell < \sigma^{-1}(i)} k_{\sigma(\ell)}  + \tau_i^{-1}(j).
\]
\end{definition}

One may imagine the $L_i$ in $[(L_1,\dots,L_m), \sigma]$ as little $2$-cubes with infinitesimal thickness but different heights when they overlap, like playing cards lying on a table.  We think of $\sigma$ as mapping heights to labels of intervals.  Thus for any $[(L_1, \dots, L_m), \sigma] \in \Ov(m)$, we say that $L_i$ and $L_j$ are at the \emph{same height} if $[(L_1, \dots, L_m), \sigma] = [(L_1, \dots, L_m), (i \ j) \circ \sigma]$.  If $L_i$ and $L_j$ are at the same height, then their interiors are disjoint, and if $\sigma^{-1}(k)$ is between $\sigma^{-1}(i)$ and $\sigma^{-1}(j)$, then $L_k$ is at the same height as $L_i$ and $L_j$.  We say that $L_i$ is at the \emph{lowest height} if $L_i$ is at the same height as $L_{\sigma(1)}$.  Budney \cite[Proposition 2.6]{Budney:Splicing} shows that there is an equivalence of operads $\CC_2 \to \Ov$ roughly given by projecting 2-cubes $L_i$ to the first coordinate and using their heights to determine a permutation $\sigma$.

By an abuse of notation, for any $L: \I \to \I$ we will sometimes denote the image of $L$ by the same symbol.  
Similarly, we will sometimes also use $L$ to denote its unique extension to an affine-linear map $\R\to \R$.

\begin{definition}
\label{D:framed-long-knots}
Define the {\em space $\K_d^{fr}$ of  framed $1$-dimensional long knots in $\R^d$} as the space of smooth self-embeddings of $\R \x D^{d-1}$ which are the identity outside of $\I \x D^{d-1}$.
Equip $\K_d^{fr}$ with the $C^\infty$-topology.  We call the first coordinate axis the {\em long axis}.
\end{definition}

Some authors define the space of framed long knots as the subspace of elements $(f, (\alpha_t)_{t\in \I}) \in \K_d \x \Omega SO(d)$ such that the first vector of $\alpha_t$ is the unit derivative of $f$ at $t$.  A linearization argument shows that this space is homotopy equivalent to $\K_d^{fr}$.

For a single little interval $L: \I \to \I$ and a framed long knot $f \in \K_d^{fr}$, define 
\begin{equation}
\label{Eq:single-interval-action}
L \cdot f := (L \x \mathrm{id}_{D^{d-1}}) \circ f \circ (L \x \mathrm{id}_{D^{d-1}})^{-1}.
\end{equation}
Budney's $\Ov$-action on $\K_d^{fr}$ is then given by 
\begin{align}
\label{Eq:ov-action-on-knots}
\begin{split}
\Ov(m) \x (\K_d^{fr})^m & \xrightarrow{A_m} \K_d^{fr} \\
\bigl([(L_1, \dots, L_m), \sigma], (f_1, \dots, f_m)\bigr) & \longmapsto  
\bigl(L_{\sigma(1)} \cdot f_{\sigma(1)}\bigr) \circ \dots \circ \bigl(L_{\sigma(m)} \cdot f_{\sigma(m)}\bigr).
\end{split}
\end{align}
This action is well conveyed by Budney's illustrations \cite[Fig.~7]{Budney:Cubes} \cite[Examples 2.4 and 2.5]{Budney:Splicing}.

\subsection{Spineless cacti}
\label{S:spineless-cacti}

We now review symmetric sequences of spineless cacti, normalized spineless cacti, and projective spineless cacti, which were introduced by Kaufmann \cite{Kaufmann:2005, Kaufmann:2007}, building on work of Voronov \cite{Voronov:2005}. 
In Section \ref{S:symm-seq-cacti}, we discuss them informally and intuitively, fixing some terminology along the way.  
In Section \ref{S:partitions-and-coend}, we give two alternative definitions due to Salvatore \cite{Salvatore:2009}, which facilitates further definitions and proofs.  They are in terms of partitions and coendomorphisms of the circle respectively.  They apply to normalized and projective cacti, which are the only variants we use directly.  


\subsubsection{Symmetric sequences of spineless cacti}  
\label{S:symm-seq-cacti}

A {\em (spineless) cactus} $C$ is a tree-like arrangement of $m$ labeled topological circles in the plane, called {\em lobes}.
It has a basepoint $b$ called the {\em global root} which we draw as a square vertex.  We call a point where lobes intersect an {\em intersection point} and draw these as round vertices.  Each lobe has a length partitioned among the arcs between either two intersection points or an intersection point and $b$.  The image of a loop traversing each circle in a fixed direction is called the {\em perimeter} of $C$.  An operadic insertion map $\circ_i$ is roughly given by inserting a cactus into a lobe, as in Figure \ref{F:cact-comp}.

\begin{figure}[h!]
	\includegraphics[scale=0.35]{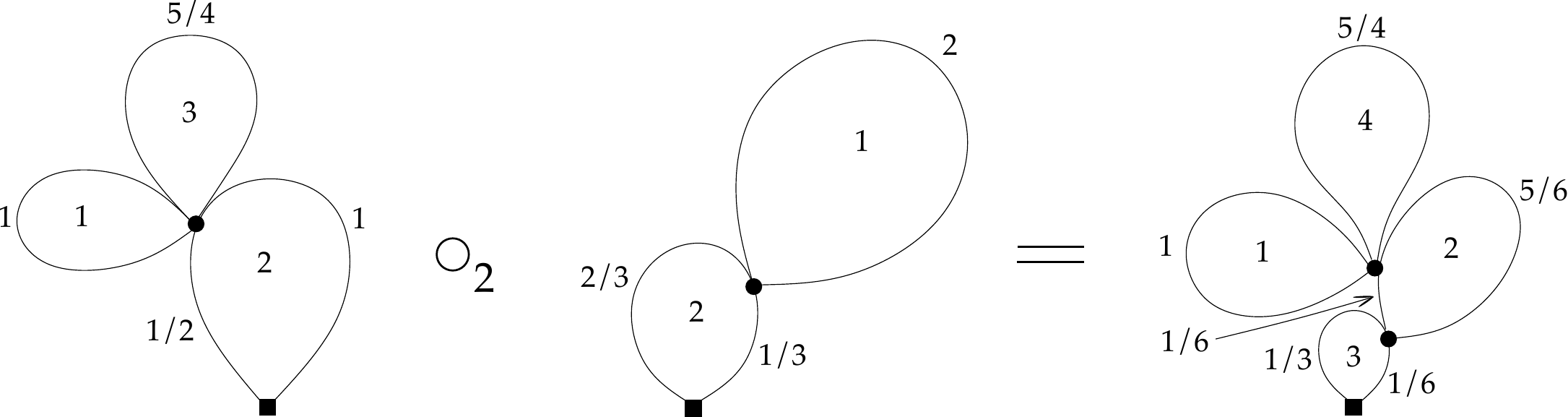}
	\caption{Shown above is an example of operadic insertion $\circ_2: \Cact(3)\times \Cact(2)\rightarrow \Cact(4)$.  Working modulo the total length of the perimeter of each cactus, we obtain the operation $\circ_2: \PCact(3)\times \PCact(2)\rightarrow \PCact(4)$. }
	\label{F:cact-comp}
\end{figure}

We consider three symmetric sequences $\{\Cact^1(m)\}_{m\in \N}$, $\{\Cact(m)\}_{m\in \N}$, and $\{\PCact(m)\}_{m \in \N}$ of spineless cacti, where the first and third are more central to our results, while the second serves as an intermediary.  The differences lie in the restrictions on the lengths of the lobes:
\begin{itemize}  
\item 
In an element of $\Cact^1(m)$, called a {\em normalized (spineless) cactus}, each lobe has length 1.  
Here composition is not associative, 
though $\Cact^1$ can be equipped with the structure of an infinity operad \cite{BCLRRW}, i.e., a structure where the operad relations hold only up to coherent homotopy.
\item 
In an element of $\Cact(m)$, called a {\em (spineless) cactus}, the lengths of the lobes are arbitrary positive real numbers.  
Here composition is associative, but any element in $\Cact(1)$ acts as an identity only on the right.  
\item 
The space $\PCact(m)$, whose elements are called {\em projective (spineless) cacti}, is the quotient of $\Cact(m)$ by the action of $(0,\infty)$ that rescales the lengths of all $n$ lobes.  Equivalently, one can define $\PCact(m)$ by requiring the sum of the $m$ lengths to be a fixed positive number.  
These spaces form an operad.
\end{itemize}
The spaces $\Cact(m)$ and $\PCact(m)$ are topologized to allow continuous variation of the length of each lobe and continuous variations of the partition of that length among the arcs.  In particular, with $\R_+:=(0,\infty)$, we have 
$\Cact(m) \cong \Cact^1(m) \x \R_+^m$, where a point in $\R_+^m$ specifies the lengths of the lobes.
We also have $\PCact(m) \cong \Cact^1(m) \x {\mathring\Delta}^{m-1}$, where a point in ${\mathring\Delta}^{m-1} = \{(t_1,\dots,t_m): t_i > 0 \text{ and } t_1 + \dots + t_m=1\}$ specifies the lengths of the lobes.  
For any of these three spaces, a permutation $\sigma \in \SS_m$ acts on the right by changing the labels $1,\dots, m$ on the lobes to $\sigma^{-1}(1), \dots, \sigma^{-1}(m)$.  
It can be viewed as pre-composing a bijection from $\{1,\dots,m\}$ to the lobes by $\sigma$, much like for the actions on cubes and overlapping intervals.

\subsubsection{Cacti via partitions of $S^1$ and $\mathrm{CoEnd}_{S^1}(n)$}
\label{S:partitions-and-coend}
Another approach is to view these spaces inside the coendomorphism operad $\mathrm{CoEnd}_{S^1}$.  Salvatore \cite{Salvatore:2009} described $\Cact^1(n)$ as a space of partitions of $S^1$ into $n$ compact $1$-manifolds (with boundary) of equal length, with pairwise disjoint interiors.  
A normalized cactus gives rise to such a partition by first rescaling it to have total length 1 and then traversing the perimeter in a fixed direction, labeling each point by the label of the lobe at the corresponding time.  Conversely, to get a normalized cactus from such a partition of $S^1$, one scales the circle to have circumference $n$ and then traces out arcs of lobes according to the labels of the $1$-manifolds encountered as one traverses the circle.

Salvatore then describes an embedding $\Cact^1(m) \incl \mathrm{CoEnd}_{S^1}(m):= \Map(S^1, (S^1)^{\x m})$.  Viewing an element of  $\Cact^1(m)$ as a partition of $S^1$ into $1$-manifolds labeled $1, \dots, m$, we send it to the map $S^1 \to (S^1)^{\x m}$ which on part $i$ is constant on all factors except the $i$-th one; on that factor it is a degree-1 map of constant speed.  
Following the idea of \cite[Proposition 4.5]{Salvatore:2009}, we can define $\Cact^1(m)$ as the subspace of maps $g:S^1 \to (S^1)^{\x m}$ such that if $\pi_i:(S^1)^{\x m}\to S^1$ is the projection onto the $i$-th factor, then 
\begin{enumerate}
\item 
for each $i$, $\pi_i \circ g$ is a piecewise-linear, degree-1 map where each piece is either constant or has speed $m$;
\item 
there is a partition of $S^1$ into compact connected $1$-manifolds intersecting only at boundary points such that on each part,  
$\pi_i \circ g$ is constant for all indices $i$ except one, called the {\em special index}; and 
\item 
the list of special indices encountered when traversing $S^1$ contains no sublist of the form $(i,j,i,j)$ with $i\neq j$.
\end{enumerate}

Similarly, $\PCact(m)$ can be defined first as a space of partitions of $S^1$ into compact $1$-manifolds with pairwise disjoint interiors, but now with possibly different lengths.  
This space embeds into $\mathrm{CoEnd}_{S^1}(m)$.  
We can equivalently define $\PCact(m)$ as the subspace of $g:S^1 \to (S^1)^{\x m}$ satisfying the conditions (2) and (3) that we set for $\Cact^1(m)$, but with condition (1) replaced by the following weaker condition:
\begin{itemize}
\item[($1'$)]
for each $i$, there exists $v_i\in (0,\infty)$ such that $\pi_i \circ g$ is a piecewise-linear, degree-1 map where each piece is either constant or has speed $v_i$.
\end{itemize}
A key advantage of defining $\PCact$ in terms of $\mathrm{CoEnd}_{S^1}$ is that the operad structure on $\mathrm{CoEnd}_{S^1}$ can be used to define one on $\PCact$.  
Salvatore writes $\mathcal{F}$ for $\Cact^1$ and $\mathcal{C}'$ for an operad homeomorphic to $\PCact$.

\subsection{B/w planted trees and cellular structures on spaces of spineless cacti}
\label{S:CW-structure-on-cacti}

We now describe cell structures on these operads using certain planted trees, as in the work of Kaufmann and Schwell \cite{Kaufmann:2005, Kaufmann:2007, Kaufmann-Schwell}; see also the work of Kaufmann and the second author \cite{Kaufmann-Zhang:2017}.   

\begin{definition}[Category of b/w trees] 
\label{D:cat-of-bw-trees}
A {\em b/w planted tree} (or more briefly a {\em b/w tree}) is a planted tree $T$ with a bipartition of its vertex set $V(T)$ into a set $B(T)$ of black vertex and a set $W(T)$ of white vertices such that the root is a black vertex and every other univalent vertex is white.  
In such a tree, a {\em leaf} is any non-root univalent vertex.

Let $\T_m$  be the following category:
\begin{itemize}
\item 
The set of objects, denoted by abuse of notation as $\T_m$, consists of b/w trees $T$ with white vertices $w_1, \dots, w_m$.  For brevity, we may sometimes refer to a white vertex $w_i$ by just its index $i$.
\item 
The morphisms are defined as follows.  Let $T \in \T_m$, and let $w\in W(T)$.  Let $e_0$ be the edge entering $w$, and let $e_1, \dots, e_{|w|}$ be the outgoing edges from $w$, arranged in the order that $T$ is equipped with.  We obtain a new tree $T'$ by gluing cyclically consecutive edges $e_i$ and $e_{i+1}$ (and the black vertices that they join $w$ to), where subscripts are read modulo $|w|+1$.  If $i=0$ or $i=|w|$, then the resulting edge enters $w$, while otherwise it leaves $w$.  We call such a gluing an {\em angle collapse}, and we write $T' \angle T$.  More generally, for any $T,T' \in \T_n$ a morphism $T' \prec T$ is a composition of angle collapses.  
\end{itemize}
\end{definition}

Examples of b/w trees are shown in Figures \ref{F:TCt} and \ref{F:19tree}.  In keeping with our conventions for drawing cacti, we draw the root as a square vertex, while the remaining black (and all white) vertices are round.

\begin{definition}
For any $j\in \{0,\dots, m-1\}$, define 
\[
\T_m^j:=\{ T \in \T_n : |w_1| + \dots + |w_m|=j\}.
\]
We call an element of $\T_m^j$ a {\em $j$-dimensional tree}.
\end{definition}

Thus $\T_m = \coprod_{j=0}^{m-1} \T_m^j$.  An angle collapse lowers the dimension of a tree by 1, so $\prec$ is a partial order on the set of objects in $\T_m$.   A $0$-dimensional b/w tree is called a {\em corolla}. \\

We now describe a regular CW complex structure on $\Cact^1(m)$ by presenting it as the colimit of a functor $\mathcal{E}_m$ from the poset category $(\mathcal{T}_m,\prec)$ to the category of topological spaces.  The $j$-dimensional cells of $\Cact^1(m)$ are indexed by elements of $\T^j_m$, and the gluing data is encoded by angle collapses $T' \angle T$.

\begin{definition}[Cellular structure on $\Cact^1(n)$]
\label{D:cell-functor}
Define a functor $\mathcal{E}_n$ as follows.
\begin{enumerate}
\item On objects $T \in \mathcal{T}^j_n$, set $\mathcal{E}_m(T):=\Delta^{|w_1|} \times \cdots \times \Delta^{|w_m|}$.  
\item On morphisms, if $T' \angle T$, and $T'$ is obtained from $T$ by the angle collapse at the white vertex $w_k$ of the edge pair $(e_i,e_{i+1})$ (where $i \in \{0,1,\cdots,|w_k|\}$ and indices are read modulo $|w_k|+1$), then  
\[
\mathcal{E}_n(T' \angle T)=\mathrm{id}_{\Delta^{|w_1|}}\times \cdots\times \mathrm{id}_{\Delta^{|w_{k-1}|}}\times d^i\times \mathrm{id}_{\Delta^{|w_{k+1}|}} \times\cdots\times \mathrm{id}_{\Delta^{|w_m|}},
\]
where $d^i$ is the $i$-th coface map:
\begin{equation*}
\begin{array}{ccrcl}
d^i& : & \Delta^{|w_k|-1} & \longrightarrow & \Delta^{|w_k|}\\
     &  & (t_1,\cdots,t_{|w_k|}) & \longmapsto & (t_1,\cdots,t_{i},0,t_{i+1},\cdots,t_{|w_k|}).\\
\end{array}
\end{equation*}
\end{enumerate}
\end{definition}

\begin{proposition}[Cellular structure on $\Cact^1(m)$]
\label{P:cell-structure}
For each $m\geq 1$, there is a homeomorphism 
\[
\Cact^1(m) \cong \mathrm{colim}_{\T_m}\mathcal{E}_m.
\]
\end{proposition}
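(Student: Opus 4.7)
The strategy is to construct, for each $T\in \T_m$, a continuous realization map
$\Phi_T\co \Delta^{|w_1|}\times\cdots\times\Delta^{|w_m|}\to \Cact^1(m)$
and show that these assemble into the claimed homeomorphism from the colimit. Given a point $(\vec{t}_1,\dots,\vec{t}_m)$ in the domain, I would build a normalized cactus with underlying combinatorial type $T$ as follows: each white vertex $w_k$ becomes a unit-length circle (a lobe) labeled $k$; each non-root black vertex becomes an intersection point at which the lobes corresponding to the incident white vertices meet; and the black root becomes the global basepoint. The incoming edge and the $|w_k|$ outgoing edges at $w_k$ then give $|w_k|+1$ special points on the $k$-th lobe whose cyclic order matches the cyclic order of the edges; the $|w_k|+1$ arcs between them are assigned the lengths prescribed by $\vec{t}_k\in \Delta^{|w_k|}$. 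The planar structure of $T$ determines the isotopy class of the planar embedding, so $\Phi_T$ is well-defined and continuous.

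To see the $\Phi_T$ descend to the colimit, I would check that an angle collapse $T'\angle T$ at the white vertex $w_k$ of the cyclic edge pair $(e_i,e_{i+1})$ corresponds geometrically to forcing the arc between the associated special points on the $k$-th lobe to have length zero. In coordinates this is exactly the coface map $d^i$ of Definition~\ref{D:cell-functor}, so $\Phi_T\circ \mathcal{E}_m(T'\angle T)=\Phi_{T'}$. The universal property of the colimit then produces a continuous map $\Phi\co \mathrm{colim}_{\T_m}\mathcal{E}_m\to \Cact^1(m)$.

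Next I would verify that $\Phi$ is a bijection. A normalized cactus all of whose arcs have positive length lies in the image of the interior of a unique cell, obtained by reading off its b/w tree from the intersection pattern (lobes give white vertices, intersection points and the global root give black vertices, cyclic orders are inherited from the planar embedding) and reading off its arc lengths from its metric. A cactus with some zero-length arcs is represented by multiple cells, but those cells are identified via angle collapses in precisely the equivalence relation defining the colimit. Finally, $\mathrm{colim}_{\T_m}\mathcal{E}_m$ is a finite CW complex and hence compact, while $\Cact^1(m)$ is Hausdorff (for instance by the embedding into $\Map(S^1,(S^1)^m)$ described in Section~\ref{S:partitions-and-coend}), so the continuous bijection $\Phi$ is a homeomorphism.

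The main obstacle I anticipate is the verification that the face identifications in the colimit match arc-length degenerations in $\Cact^1(m)$ on the nose --- no more and no less. Concretely, one must carefully align the cyclic order of edges at each white vertex with the cyclic order of arcs on the corresponding lobe, and check that each angle collapse corresponds to a unique degeneration of the cactus, so that no spurious additional identifications are needed beyond those already encoded by $\mathcal{E}_m$. The mixed case in which the collapsed angle involves the incoming edge $e_0$ (so that the merged edge inherits the status of entering $w_k$, per Definition~\ref{D:cat-of-bw-trees}) deserves particular care, as does the possibility that collapsing several arcs produces a black vertex of higher arity where three or more lobes meet at a single point.
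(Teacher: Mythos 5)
Your construction is essentially the paper's own argument: the paper's proof (given only as a sketch) likewise identifies white vertices with lobes, black vertices with the global root and intersection points, and the simplex coordinates with the arc lengths along each lobe, with angle collapses corresponding exactly to the coface maps. The extra details you supply --- assembling the cellwise realization maps via the universal property of the colimit, and upgrading the continuous bijection to a homeomorphism using compactness of the finite colimit and Hausdorffness of $\Cact^1(m)$ via the embedding into $\mathrm{CoEnd}_{S^1}(m)$ --- go beyond the paper's sketch but follow the same route.
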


\begin{proof}[Sketch of proof]
To see that the right-hand side agrees with the definition of $\Cact^1(m)$ in terms of lobes as in Section \ref{S:symm-seq-cacti}, one identifies white vertices with lobes and black vertices with the basepoint and any intersection points.  If one views a point in a simplex as an ordered tuple of numbers $t_i$ that sum to 1, then these $t_i$ specify the lengths of the arcs along each lobe, traveling in a fixed direction from the local root of the lobe.  
Alternatively, in terms of partitions of $S^1$ as in Section \ref{S:partitions-and-coend}, the coordinates $t_1, \dots, t_{|w_k|}$ specify the lengths of the components of the compact $1$-manifold labeled by $k$.  
\end{proof}

Because $\PCact(m) \cong \Cact^1(m) \x \mathring\Delta^{m-1}$, we obtain a cellular structure on $\PCact(m)$ from the one on $\Cact^1(m)$.
In particular, we can use the following cell structure on 
$
\mathring{\Delta}^{m-1}=\{(t_1,\dots,t_m) : t_i> 0, \ t_1+\dots+t_m=1\}.
$
For each $i=1,\dots,m$, and $j=m, m+1, \dots$, consider the hyperplanes $t_i=\frac{1}{j}$. They divide $\mathring{\Delta}^{m-1}$ into regions which are the top dimensional cells.  
Each such hyperplane or an iterated intersection of them is divided by the other hyperplanes into regions which are the lower-dimensional cells.

\begin{definition}
\label{D:underlying-tree}
A cactus $C$ in $\Cact^1(m)$ (respectively $\PCact(m)$) has $T \in \T_m$ as its {\em underlying} b/w tree if $C$ corresponds to a point in the interior of $\mathcal{E}(T)$ (respectively the product of the interior of $\mathcal{E}(T)$ with $\mathring{\Delta}^{m-1}$).
\end{definition}

In other words, $C$ may belong to closed cells $\mathcal{E}(T)$ for various $T$, and the underlying b/w tree of $C$ corresponds to the closed cell of lowest dimension.  It is also the initial tree among these $T$, meaning that it has the fewest edges.
We may sometimes adopt terminology for trees when discussing cacti.

\begin{remark}
\label{R:eqv-of-cacti-as-K(B,1)-spaces}
%
There is a $\SS_m$-equivariant inclusion $\iota_m:\mathcal{C}act^1(m)\hookrightarrow \PCact(m)$ which sends a normalized cactus to a cactus in which the length of each lobe is $\frac{1}{m}$. 
The $\SS_m$-action on $\PCact(m)$ corresponds to the diagonal action on $\Cact^1(m) \x \mathring\Delta^{m-1}$. 
Via this inclusion, $\Cact^1(m)$ is a deformation retract of $\PCact(m)$. 
\end{remark}

\section{Models for the Taylor tower for spaces of long knots}
\label{S:Taylor-tower}

In Section \ref{S:configs-and-models}, we review the models we use for the Taylor tower stages.  First we recall the simplicial compactifications of (framed) configuration spaces and their operadic insertion maps.  These are used to build cosimplicial models for spaces of knots, out of which the mapping space models are constructed.  We consider both spatial and infinitesimal variants of these models.  In Section \ref{S:spatial-to-infinitesimal}, we carefully define a map from the spatial to the infinitesimal mapping space model, refining previous joint work of the first author \cite{BCKS:2017} to better suit our purposes and provide more conceptual clarity.  This map is needed because the spatial mapping space model admits a map from the space of knots, described in Section \ref{S:eval-maps}, whereas the infinitesimal mapping space model will admit an action of $\PCact$.

\subsection{Mapping space models for Taylor tower stages}
\label{S:configs-and-models}

\subsubsection{Compactified configuartion spaces and the Kontsevich operad}
\label{S:cpt-config} 
We recall various configuration spaces and maps between them, primarily following Sinha's work \cite{Sinha:Operads, Sinha:Cptfn}.
Our notations agree with previously used ones \cite{BCKS:2017} except where noted:
\begin{itemize}
\item $C_n(X)$ is the configuration space of distinct ordered $n$-tuples in a space $X$.  
\item $C_n\la M \ra$ is the simplicial compactification of $C_n(M)$, due to Sinha \cite{Sinha:Cptfn}, where $M$ is a manifold $M$, possibly with boundary and embedded in $\R^d$.  This space records the directions of collision when multiple points are at the same location in $M$.
\item $C_n^{fr}\la \R^d \ra := C_n\la \R^d \ra \x GL(d)^n$, and $C_n^{fr}\la \I^d \ra := C_n\la \R^d \ra \x GL(d)^n$.  
We primarily use $GL(d)$ instead of $O(d)$ \cite{BCKS:2017} because it facilitates the comparison of our cactus action to the cubes action on $\K_d^{fr}$.  In Section \ref{S:applications}, we use $O(d)$ instead to avail ourselves of its action on $S^{d-1}$.
\item $C_n\la \I^d, \d \ra$ is the subspace of $C_{n+2}\la \I^d, \d \ra$ where the first and last point are at the boundary points along the long axis.  Using a similar definition, $C_n\la \I, \d \ra \cong C_n \la \I \ra \cong \Delta^n$.  
\item
$C_n^{fr} \la \I^d, \d \ra$ is the subspace of $C_{n+2}^{fr}\la \I^d, \d \ra$ where the first and last point are at the boundary points along the long axis and the first and last frames are the identity element in $GL(d)$.
\item 
$\tC_n^{fr} \la \R^d \ra$ is the result of forgetting locations of points from $C_n^{fr} \la \R^d \ra$; alternatively, it is the result of taking the quotient by translation and scaling.  Defining $\tC_n^{fr} \la \I^d \ra$ similarly, we have 
$\tC_n^{fr} \la \I^d \ra \cong \tC_n^{fr} \la \R^d \ra$.  We call elements of these space {\em infinitesimal} framed configurations, whereas elements of the configuration spaces above are called {\em spatial} (framed) configurations.
\item 
There are maps 
$\circ_i : C_m^{fr} \la \I^d, \d \ra \x C_n^{fr} \la \I^d, \d \ra \longrightarrow C_{n+m-1}^{fr} \la \I^d, \d \ra$
and
$\circ_i : \tC_m^{fr} \la \R^d \ra \x \tC_n^{fr} \la \R^d \ra \longrightarrow \tC_{n+m-1}^{fr} \la \R^d \ra$ roughly given by inserting one configuration into the $i$-point of the other at infinitesimal scale and using the framing at the $i$-th point.  These maps make $\{\tC_m^{fr} \la \R^d \ra\}_{m \in \N}$ into an operad, called the {\em framed Kontsevich operad}.  
In either the spatial or infinitesimal setting, they are well illustrated by Sinha's pictures \cite[Figure 3.7]{Sinha:Cptfn} \cite[Figure 4.6]{Sinha:Operads} \cite[Figure 4.3]{Sinha:Top}.
\end{itemize}

For an element of a space above,
we use the term $i${\em-th configuration point} to mean all the required data involving the index $i$, that is, a location, a collection of direction vectors, and if applicable, a frame.

\subsubsection{Cosimplicial models for spaces of long knots}
\label{S:cosimplicial-models}

The mapping space models which will use for the Taylor tower stages come from cosimplicial models for spaces of long knots.  Cosimplicial models for spaces of long knots were first defined in work of Sinha \cite{Sinha:Top},
and the framed version was first defined by Salvatore \cite{Salvatore:Knots}. 
They are most easily described in the context of $\overline{\K}_d$, the space of long knots modulo immersions, meaning the homotopy fiber of the inclusion $\K_d \to \Imm_d$ which sends a long embedding to a long immersion.  

The cosimplicial space  $C_\bullet \la \I^d, \d \ra$ for $\overline{\K}_d$ has  $C_n \la \I^d, \d \ra$ as its $n$-th entry.  Each coface map $d^i: C_{n} \la \I^d, \d \ra \to C_{n+1} \la \I^d, \d \ra$, $i=0,\dots, n+1$, is given by doubling the $i$-th configuration point in the positive direction of the first coordinate axis.  Each codegeneracy map $s^i: C_n \la \I^d, \d \ra \to C_{n-1} \la \I^d, \d \ra$, $i=1,\dots, n$, is given by forgetting the $i$-th point.  
There are cosimplicial models $C'_\bullet\la \I^d, \d \ra$ and $C^{fr}_\bullet\la \I^d, \d \ra$ for $\K_d$ and $\K^{fr}_d$ respectively.  They are defined similarly, but with the additional data of a tangent vector in $S^{d-1}$ (respectively a frame in $GL(d)$ or $O(d)$) at each configuration point.  Each coface map adds a new point in the direction of the tangent vector (respectively the first vector in the frame).  We refer to these cosimplicial spaces as {\em spatial cosimplicial models}.

The reason for that terminology is that $\overline{\K}_d$ and $\K_d^{fr}$ can also be modeled by cosimplicial spaces $\tC_\bullet \la \R^d \ra$ and $\tC^{fr}_\bullet \la \R^d \ra$, respectively, which we call {\em infinitesimal cosimplicial models}.  The spatial and infinitesimal models for $\overline{\K}_d$ and $\K_d^{fr}$ admit an operadic description of the coface maps.  In the spatial model for $\overline{\K}_d$, $d^i$ is given by inserting $b$ into the $i$-th point, where $b$ is the image in $C_2\la \I^d \ra$, respectively, of the single point in $C_0\la \I^d, \d \ra$.  In the infinitesimal model for $\overline{\K}_d$, $d^1, \dots, d^{n}$ are defined using a similar element $\tb$ in $\tC_2\la \R^d \ra$, while $d^0$ and $d^{n+1}$ are given by inserting the $n$-point configuration into the first and last point of $b$ respectively.  For $\K_d^{fr}$, a similar description applies.

\begin{remark}
\label{R:AM-behavior-at-pm-1}
The coface maps $d^0$ and $d^n$ in the infinitesimal model can be viewed as insertions into $(\mp \infty, 0,\dots,0)$ respectively.  That is, each is the limit of insertion maps into an added basepoint which approaches one of these two points.  The equalities between $d^0$ and $d^n$ and these limits are crucial for the continuity of our action and the compatibility with the evaluation map on knots.
\end{remark}

\subsubsection{Mapping space models for spaces of long knots}
\label{S:mapping-space-models}

The mapping space models arise from Sinha's cosimplicial models \cite{Sinha:Top} and 
have been used to study finite-type knot invariants \cite{BCSS:2005, BCKS:2017}.  
Let $[n]:=\{0,1,\dots, n\}$, and let $\P_\nu[n]$ be the category whose objects are the nonempty subsets of $[n]$ and whose morphisms are inclusions.  The  {\em spatial mapping space model} ${AM}_n^{fr}$ (respectively the {\em infinitesimal mapping space model} $\widetilde{AM}_n^{fr}$) is the homotopy limit of the functor $\P_\nu[n] \to \mathcal{T}op$ obtained by pre-composing $\ C^{fr}_\bullet \la \I^d, \d \ra$ (respectively $\ \tC^{fr}_\bullet \la \R^d \ra$) by a canonical functor $\P_\nu[n] \to \Delta$.

The key point for us is the concrete descriptions of these models that follow from this definition.
Namely, $AM_n^{fr}$ is the subspace
\begin{equation}
\label{eq:AM-as-subspace-of-product}
AM_n^{fr} \subset \prod_{S: \, \varnothing \neq S \subseteq [n]} \Map(\Delta^{|S|-1}, C^{fr}_{|S|-1} \la \R^d \ra)
\end{equation}
of maps $(\phi_S)_{\varnothing \neq S\subseteq [n]}$ such that for every $T\subset S$ with $|T|=|S|-1$, we have $\phi_S \circ d^i = d^i \circ \phi_T$.

Since each $d^i$ is injective, an element in $AM_n^{fr}$ is determined by  $\phi := \phi_{[n]}: \Delta^n \to C_n^{fr} \la \R^d \ra$.  Thus we will sometimes view $AM_n^{fr}$ as a subspace of $\Map(\Delta^n, C_n^{fr}\la \R^d \ra)$, equipped with the compact-open topology.  

For projection maps in the tower, we can use the inclusion $\delta^n: [n-1] \incl [n]$.
By considering only those $\phi_S$ such that $S \subset [n-1]$, we obtain a map 
\[
AM_n^{fr} \to AM_{n-1}^{fr}
\]
which we view as a projection.  Using any other inclusion $\delta^i: [n-1]\to [n]$ in this definition yields a homotopic map because all the $d^i: \Delta^{n-1} \to \Delta^n$ are homotopic. 

A discussion analogous to that in the previous two paragraphs apply just as well to $\tAM_n^{fr}$.

We have both $AM_n^{fr} \simeq T_n \K_d^{fr}$ and $\tAM_n^{fr} \simeq T_n \K_d^{fr}$.  A proof for $AM_n^{fr}$ can be given by adapting Sinha's work \cite[Theorem 5.4]{Sinha:Top} to the framed setting, by replacing tangent vectors by frames.  For $\tAM_n^{fr}$, one must also replace spatial configurations by infinitesimal ones; see Proposition \ref{P:quot-is-equiv} below.  We need $\widetilde{AM}_n^{fr}$ for our cactus operad action, while ${AM}_n^{fr}$ is more closely related to long knots via the evaluation map.

\subsection{From the spatial to the infinitesimal mapping space model}
\label{S:spatial-to-infinitesimal}

The quotient map from $C_n \la \I^d, \d \ra$ to $\tC_n \la \I^d \ra = \tC_n \la \R^d\ra$ does not induce a map $AM_n^{fr} \to \widetilde{AM}_n^{fr}$.  
Indeed, consider the restriction of any $\phi \in AM_n^{fr}$ to the face $t_1=-1$: here the unit vector direction from the first configuration point $\phi(\bt)_1$ to any configuration point $\phi(\bt)_j$ with $1< j < n$ might not be $(1,0,\dots,0)$.  The same is true of the restriction of $\phi$ to the face $t_n=1$ and the unit vector  to $\phi(\bt)_n$ from any point $\phi(\bt)_j$ with $1 < j <n$.

Nonetheless, there is a map $\quot: AM_n^{fr} \to \widetilde{AM}_n^{fr}.$
We will define it by first enlarging the output configurations of a map $\phi\in AM_n^{fr}$ to include points on the long axis in $[-2,2]$, and then shrinking the original configuration to infinitesimal size so that the above mentioned vectors become horizontal.  

A map similar to $q$ was defined previously \cite{BCKS:2017} by including configuration points on the long axis all the way to $\pm \infty$ rather than shrinking the configuration.  
Although our exposition here is more elaborate, it provides conceptual consistency with a homotopy that we will use in Section \ref{S:compatibility} to establish the compatibility of our cactus action with the  $\CC_2$-action on $\K^{fr}_d$.
It also allows us to define our cactus action on aligned maps without going back and forth between the spatial and infinitesimal variants; see Remark \ref{R:conf-it}.

We start by defining a smooth map $H: \R^d \x [0,1) \to \R^d$ that encodes the shrinking process.  
To ensure that points can continuously exit or enter the infinitesimal configuration (by becoming arbitrarily far away within it), we interpolate between quadratic and linear rates of approach towards the origin.
More precisely, define $H$ for $\bx = (x_1,\dots,x_d) \in \R^d$ and $t\in [0,1)$ by 
\begin{equation}
\label{Eq:shrinking-H}
H(\bx,t) := 
\dfrac{(1-t)(1-\nu(\bx)t)}{1 - \mu(\bx)t} \bx 
\end{equation}
where $\mu$ and $\nu$ are smooth cutoff functions such that 
\begin{align*}
\mu(\bx)=
\left\{
\begin{array}{ll}
0
& \text{ if }  \lVert \bx \rVert \leq \sqrt{2} \\
1
 & \text{ if } 2 \leq \lVert \bx \rVert \\
\end{array}
\right.
\qquad \text{ and } \qquad
\nu(\bx)= 
\left\{
\begin{array}{ll}
1  & \text{ if }    \lVert \bx \rVert \leq 2 \\
0 & \text{ if }  4 \leq \lVert \bx \rVert .
\end{array}
\right.
\end{align*}
For example, we can use 
$\lambda(x) = \left\{ \begin{array}{ll} e^{-1/x^2} \text{ if } x>0 \\ 0 \text{ if } x \leq 0\end{array} \right.$ and
\begin{equation*}
\begin{split}
\mu(\bx) = \dfrac{\lambda(\lVert \bx \rVert - \sqrt{2})}{\lambda(\lVert \bx \rVert - \sqrt{2}) + \lambda(2 - \lVert \bx \rVert)}
\end{split}
\qquad \qquad
\begin{split}
\nu(\bx) = \dfrac{\lambda(4 - \lVert \bx \rVert)}{\lambda(4 - \lVert \bx \rVert) + \lambda(\lVert \bx \rVert - 2)}.
\end{split}
\end{equation*}
The bound $\sqrt{2}$ corresponds to the maximum distance from the origin to a point in $\I \x D^{d-1}$, which will be important when we combine this map with framed long knots in Section \ref{S:eval-maps} and especially in Section \ref{S:compatibility}.
Trajectories of various points under $H$ are illustrated in Figure \ref{F:H-trajectories}.

\begin{figure}[h!]
\includegraphics[width=\textwidth, height=3.5cm]{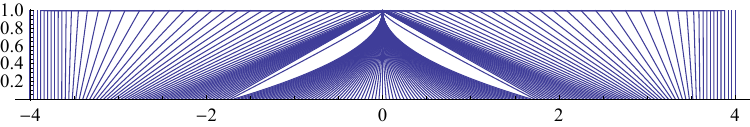}
\caption{
Above are trajectories of various points $x \in [-4, 4]$ under the map $H(x,t)$ for $t\in [0,1)$, where $d=1$.  The $x$-axis is the horizontal axis, the $t$-axis is the vertical axis.
}
\label{F:H-trajectories}
\end{figure}

For each $t\in [0,1)$, $H(-,t)$ is an embedding.  
Let  $H_n(-,t): C_n^{fr}\la \R^d\ra \to C_n^{fr}\la \R^{d}\ra$ be the map which applies $H(-,t)$ to each configuration point and applies the derivative $DH(-,t)$ followed by the Gram--Schmidt retraction $GL(d) \to O(d)$ to each frame.
Because each $H(-,t)$ is a scaling map, the effect on a frame is independent of $t$.  (The Gram--Schmidt process ensures that the frames do not approach the zero matrix.)
Although the limit as $t\to 1$ of $H(-,t)$ is not an embedding, $\lim_{t\to 1}H_n(-,t)$ exists, and it will be our shrinking map.  

We sketch the proof of continuity by describing the effect on a framed $n$-point configuration.  The independence of the effect on frames from $t$ means that we just have to consider the configuration points.
A point $\bx$ with $\lVert \bx \rVert>2$ is simply sent to $\lim_{t\to 1}H(\bx,t)$, which lies outside the origin.  
Points $\bx$ with $\lVert \bx \rVert < 2$ are sent to points in an infinitesimal configuration at the origin.  
The data needed for an infinitesimal configuration is a configuration modulo translation and scaling, but we describe a representative before taking this quotient.  
In this representative configuration, a point $\bx$ with $\lVert \bx \rVert \leq \sqrt{2}$ is sent to $\bx/\sqrt{2}$, while more generally $\bx$ with $\sqrt{2} \leq \lVert \bx \rVert < 2$ is sent to a point at distance $\lVert \bx \rVert/(1-\mu(\bx))$ from the origin along the ray in the direction of $\bx$.
The key point is that we ensure continuity by mapping the disk of radius $\sqrt{2}$ to the unit disk and the open disk of radius 2 onto all of $\R^d$ in this infinitesimal configuration.  A point $\bx$ with $\lVert \bx \rVert =2$ is mapped to a point at infinity in the direction of $\bx$ in the infinitesimal configuration.  
Thus a point can travel into or out of the infinitesimal configuration in a continuous path.

\begin{remark}
One can check that for example the map 
\[(\bx, t) \mapsto \bigl((1-\mu(\bx))(1-t)^2 + \mu(\bx)\nu(\bx)(1-t) + (1-\nu(\bx)) \bigr) \bx
\]
with $p$ and $q$ as above does {\em not} induce a continuous map on $C_n^{fr} \la\R^d\ra$.  This is because it fails the required continuity property noted for configuration points $\bx$ with $\sqrt{2} \leq \lVert \bx \rVert \leq 2$, even though it defines a smooth embedding for each $t\in [0,1)$ and interpolates between quadratic, linear, and constant functions of $t$ as $\lVert \bx \rVert$ varies.
\end{remark}

Next, any $\phi \in AM_n^{fr}$ extends to a map 
\[
\overline{\phi}: C_n\la \R \ra  \to C_n^{fr}\la{\R^d} \ra
\]
as follows. For $t\in \R$, define 
\[
\overline{t}=
\left\{
\begin{array}{ll}
-1 & \text{ if } t\leq -1 \\
t & \text{ if } t\in \I \\
1 & \text{ if } 1 \leq t
\end{array}
\right.
\]  
Then $\overline{\phi}(-\infty < t_1 \leq \dots \leq t_n < \infty) \in C_n^{fr}\la {\R^d} \ra$ is the class obtained from $\phi(\overline{t_1}, \dots, \overline{t_n})$ by replacing the $i$-th configuration point by $(t_i, 0, \dots,0)$ for all $i$ such that $t_i  \notin \I$.  
If $t_i=t_j \in \R\setminus\I$ and $i<j$, 
then the vector $v_{ij}$ specifying the direction of collision from point $i$ to point $j$ is $(1,0,\dots, 0)$.  

We are now ready to map from the spatial to the infinitesimal model.  Let $\tau: C_n \la \I \ra \to C_n \la \R \ra$ be the map induced by the embedding $\I \to \R$ given by $t \mapsto 2t$.

\begin{definition}
\label{D:AM-quot}
Define 
\begin{align*}
\quot: AM_n^{fr} &\to \widetilde{AM}_n^{fr}\\
\phi &\mapsto \quot(\phi)
\end{align*}
by setting $\quot(\phi)$ to be the composite 
\[
C_n \la \I \ra \overset{\tau}{\longrightarrow} C_n \la \R \ra \overset{\overline{\phi}}{\longrightarrow} C_n^{fr}\la {\R^d} \ra
\xrightarrow{\lim_{t\to 1} H_n(-,t)}
C_n^{fr}\la {\R^d} \ra 
\twoheadrightarrow
\tC_n^{fr} \la \R^d \ra.
\]
\end{definition}

A collection of times in $[-1/2, 1/2]$ is sent by $q(\phi)$ to a configuration in the image of $\phi$.  Times $t_i$ with $1/2 \leq |t_i| \leq 1$ are sent to configuration points on the long axis, and for $t_i=\pm 1$, the corresponding configuration points appear infinitely far from configuration points coming from the image of $\phi$.

\begin{proposition}
\label{P:quot-is-equiv}
The map $\quot$ is a homotopy equivalence.
\end{proposition}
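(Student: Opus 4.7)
The plan is to exhibit $q$ as a map of homotopy limits induced by a levelwise weak equivalence of diagrams $\P_\nu[n] \to \T op$, and then invoke homotopy-invariance of $\mathrm{holim}$. Concretely, I would first verify that the recipe for $q$ respects the restriction-to-subset structure of aligned maps: for each $\phi \in AM_n^{fr}$ and each $\varnothing \neq S \subseteq [n]$, applying the steps of Definition \ref{D:AM-quot} (reparametrization $\tau$, extension $\overline{\phi}$, shrinking limit $\lim_{t\to 1}H_{|S|-1}(-,t)$, and projection to $\tilde{C}_{|S|-1}^{fr}\la\R^d\ra$) to $\phi_S$ yields precisely the $S$-component of $q(\phi)$. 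This amounts to checking that each step intertwines the spatial and infinitesimal coface maps. The interior cofaces $d^i$ for $0 < i < n$ present no problem, since inserting $b$ and then shrinking agrees with first shrinking and then inserting $\tb$. The nontrivial cases are $d^0$ and $d^n$: in the spatial cosimplicial object they are insertions of the distinguished boundary points of $\I^d$, whereas in the infinitesimal one they are the limit insertions at $(\mp\infty,0,\dots,0)$ of Remark \ref{R:AM-behavior-at-pm-1}. The cutoff thresholds $\sqrt{2}$ and $2$ in the construction of $H$ are engineered so that configuration points approaching $\pm 1$ on the long axis flow continuously to points at infinity in the infinitesimal configuration, matching the limit-insertion interpretation.

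Next, I would check that the resulting levelwise map $C_k^{fr}\la \I^d, \d\ra \to \tC_k^{fr}\la\R^d\ra$ is a weak equivalence for each $k$. This decomposes into: (i) the equivalence $C_k\la\R^d\ra \simeq \tC_k\la\R^d\ra$ of Remark \ref{R:inf-configs}; (ii) lifting across the $GL(d)^k$-fibration that defines the framed compactifications from their unframed counterparts; and (iii) the observation that fixing the two distinguished boundary points (together with their standard frames) in $C_k\la \I^d, \d\ra$ contributes only contractible factors, so the natural map from $C_k^{fr}\la \I^d, \d\ra$ to the framed infinitesimal compactification of $k$-point configurations in $\R^d$ is an equivalence. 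Since both $\P_\nu[n]$-indexed diagrams are Reedy fibrant in the sense needed for this homotopy-limit model, a levelwise weak equivalence of diagrams induces a weak equivalence of homotopy limits, giving that $q$ is a homotopy equivalence.

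The main obstacle is the verification in the first step, especially continuity of $\lim_{t\to 1}H_k(-,t)$ as a map on the spatial compactification and its intertwining of $d^0$ and $d^n$ with their infinitesimal analogues. The smoothness of the interpolation between quadratic and linear rates of approach to the origin in \eqref{Eq:shrinking-H} was built precisely so that a configuration point with $\lVert\bx\rVert < \sqrt{2}$ is carried into the unit ball of the infinitesimal configuration while a point with $\lVert\bx\rVert \to 2$ is carried to a direction at infinity, matching the requirement that points entering through $t_i = \pm 1$ realize the limit insertions at $\mp\infty$. Unpacking the definitions on each boundary face of $\Delta^{|S|-1}$ and comparing to the formulas for the cosimplicial coface maps in both $C_\bullet^{fr}\la\I^d,\d\ra$ and $\tC_\bullet^{fr}\la\R^d\ra$ is what remains to make the argument rigorous.
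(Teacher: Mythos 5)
There is a genuine gap at the heart of your strategy. You propose to exhibit $\quot$ as the map of homotopy limits induced by a levelwise weak equivalence of the two $\P_\nu[n]$-diagrams and then invoke homotopy invariance of $\holim$. But $\quot$ is not induced by any natural transformation $C^{fr}_{\bullet}\la \I^d,\d\ra \circ \mathcal{G}_n \Rightarrow \tC^{fr}_{\bullet}\la\R^d\ra \circ \mathcal{G}_n$: its definition involves the domain reparametrization $\tau$ and the extension $\overline{\phi}$, so the value $\quot(\phi)(\bt)$ depends both on $\phi$ evaluated at a clamped point $(\overline{2t_1},\dots,\overline{2t_n})$ and on the coordinates $t_i$ themselves (which are used to place points on the long axis before shrinking). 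This is not post-composition of $\phi_S$ with fixed maps $C^{fr}_{|S|-1}\la\I^d,\d\ra \to \tC^{fr}_{|S|-1}\la\R^d\ra$. Indeed, the only natural levelwise candidate --- the quotient to infinitesimal configurations (equivalently, the shrinking map applied directly, since all configuration points lie within radius $\sqrt{2}$) --- fails to commute with the outer cofaces $d^0$ and $d^n$; this is stated explicitly at the beginning of Section \ref{S:spatial-to-infinitesimal} and is the entire reason $\quot$ has its roundabout definition. Your ``intertwining'' check for $d^0$ and $d^n$ really only verifies that $\quot(\phi)$ satisfies the alignment squares \eqref{Eq:aligned-square}, i.e.\ that $\quot$ is well defined into $\tAM_n^{fr}$ and compatible with the face restrictions; it does not produce a natural transformation (or even a zigzag of such) to which levelwise-equivalence-implies-equivalence-of-$\holim$ can be applied. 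So the final step of your argument has nothing to act on, and the proposition does not follow.

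By contrast, the paper's proof sidesteps this: it observes that $\quot$ differs from the map $\iota$ of \cite[Definition 3.7]{BCKS:2017} only by a homeomorphism of $C_n\la\R\ra$ coming from a reparametrization of $\R$, hence is homotopic to $\iota$, and then cites \cite[Proposition 3.8]{BCKS:2017} that $\iota$ is a homotopy equivalence; the levelwise equivalence $C_n\la\I^d,\d\ra \to \tC_n\la\I^d\ra$ that you invoke in part (iii) enters only as one ingredient of that cited proof. To make your direct route work you would need an actual comparison of diagrams --- e.g.\ an intermediate cosimplicial object receiving genuine natural transformations from both sides, together with an argument that $\quot$ agrees up to homotopy with the induced zigzag --- which amounts to reproving the BCKS result rather than a formal application of $\holim$ invariance.
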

\begin{proof}[Sketch of proof]
The map $q$ is homotopic to the map $\iota$ in previous joint work of the first author \cite[Definition 3.7]{BCKS:2017} because $\quot$ and $\iota$ differ by a homeomorphism of $C_n\la \R\ra$ induced by a reparametrization $\R \to \R$.
The map $\iota$ is a homotopy equivalence \cite[Proposition 3.8]{BCKS:2017}.  
A key ingredient in the proof is that the quotient $C_n \la\I^d, \d\ra \to \tC_n\la \I^d\ra$ is an equivalence,
as implied by work of Sinha \cite[Theorem 4.2]{Sinha:Operads}.
Since $\iota$ is a homotopy equivalence, so is $\quot$.
\end{proof}

\subsection{The evaluation map}
\label{S:eval-maps}

Recall from Definition \ref{D:framed-long-knots} that $\K_d^{fr}$ is the space of framed long knots in $\R^d$.
The derivative $Df(x)$ of $f\in \K_d^{fr}$ at any $x$ in the domain of $f$ is an element of $GL(d)$, using standard trivializations of the tangent bundles of the domain and codomain.  
Let $GS: GL(d) \to GL(d)$ be the composition of the Gram--Schmidt retraction followed by the inclusion $O(d) \to GL(d)$.

\begin{definition}
\label{D:ev-map}
We define the {\em evaluation map} 
\begin{align*}
 \K_d^{fr} &\to AM_n^{fr}\\
  f &\mapsto ev_n(f)
\end{align*}
by letting $ev_n(f)$ be the unique continuous extension of the map 
\begin{align*}
\mathring{\Delta}^n &\to C_n(\I^d) \x GL(d)^n\\
(-1 < t_1 < \dots < t_n < 1) &\mapsto 
\bigl( f(t_1,0^{d-1}), \dots, f(t_n, 0^{d-1}); \, GS(Df(t_1, 0^{d-1})), \dots, GS(Df(t_n, 0^{d-1}))  \bigr)
\end{align*}
to a map $\Delta^n \to C_n^{fr} \la \I^d, \d \ra$.
\end{definition}  

We may now compare knots to infinitesimal aligned maps via the composite $\quot \circ ev_n: \K_d^{fr} \to \widetilde{AM}_n^{fr}$.

\section{The cactus operad action on the Taylor tower for the space of framed long knots}
\label{S:action-def}

In this Section, we will prove part of Theorem \ref{T:A}. We define an action of the operad of spineless projective cacti on each Taylor tower stage in Definition \ref{D:action}.  Theorem \ref{T:action} is that this is a well defined operad action.  
In Proposition \ref{P:proj-compatible}, we prove compatibility with the projection maps in the tower.
The proof of the remainder of Theorem \ref{T:A} is given in Section \ref{S:compatibility} and requires Theorem \ref{T:B}, which will be proven in Section \ref{S:normalized-overlapping}.

\subsection{The definition of the action}

More precisely, we will now construct for each $m\geq 1$ continuous maps
\[
\alpha_m:\PCact(m)\times \left(\widetilde{AM}^{fr}_n\right)^m\to \widetilde{AM}^{fr}_n
\]
and show that they satisfy the defining conditions of an operad action.

Let $C\in \PCact(m)$ and $\phi^1, \dots, \phi^m \in \widetilde{AM}^{fr}_n$, where each $\phi^i=\{\phi^i_S\}_{\varnothing \neq S\subseteq [n]}$.   Let $\bt=(t_1, \dots, t_n)\in \Delta^n$, where each $t_i \in \I$ and $t_1 \leq \dots \leq t_n$.  Our present task is to produce out of these three pieces of data an equivalence class of configurations in $\tC_n^{fr}\la \R^d \ra$.

To do so, we need to set a handful of auxiliary definitions.  Specifically, for each $\ell \in \{1,\dots, m\}$ we will define the following objects in the order listed: 
\begin{itemize}
\item a nonempty subset $S_\ell(C,\bt) \subseteq [n]$, 
\item an element $\bt^{C,\ell} \in \Delta^{|S_\ell(C,\bt)|-1}$,
\item a continuous, monotone non-decreasing surjection $\rho_\ell(C):\I \to \I$, and 
\item an object $T(C,\bt)$ of the category of trees $\Upsilon$ from which operads induce functors.
\end{itemize}

In turn, some preliminary notation and discussion facilitate these definitions.
Fix a representative of $C$ by requiring the total lengths of its lobes to be the length of $\I$.  Then identify the coordinates $t_1, \dots, t_n \in \I$ with points on the perimeter of $C$, by starting at the root and traversing it clockwise (so as to coincide with the positive direction in $\I$ along the horizontal axis).

For a lobe $\ell \in \{1, \dots, m\}$, let $w$ be the number of lobes directly above $\ell$.
We start by defining positive integers $k_1 \leq \dots \leq k_{w+1}$ and nonnegative integers $j_1 \leq \dots \leq j_{w+1}$ depending on $\ell$.
If $w=0$, let $k_1$ be the index such that $t_{k_1}$ is the last $t_i$ encountered before lobe $\ell$, and let $j_1=0$.
Otherwise, let $k_p$ and $j_p$ be the indices such that for each $q\in \{1,\dots, w+1\}$, 
$t_{k_p+1}\leq \dots\leq  t_{k_p+j_p}$ are the $t_i$ lying on lobe $\ell$ and between the $(p-1)$-th and $p$-th local roots on $\ell$, where we consider the first and second occurrences of the local root of $\ell$ itself as the $0$-th and $(w+1)$-th local root.
That is, the $t_i$ on lobe $\ell$ are
\begin{equation}
\label{Eq:t_i-on-lobe-ell}
t_{k_1+1}\leq \dots\leq  t_{k_1+j_1}\bigg| 
t_{k_2+1}\leq \dots\leq t_{k_2+j_2}\bigg| 
\dots\bigg| 
t_{k_{w+1}+1}\leq \dots\leq t_{k_{w+1}+j_{w+1}}
\end{equation}
where each vertical bar corresponds to a lobe directly above $\ell$.  
Any $t_i$ at local roots pose some ambiguity, so we fix the following convention: 
we do not include any $t_i$ at the local roots of lobes directly above $\ell$ or to the right of $\ell$, but we include any $t_i$ at the first appearance of the local root of $\ell$ and, if there are no lobes to the right of $\ell$, any $t_i$ at its last appearance.  Informally, we put $t_i$ on lobes as high up and as far to the right as possible.

Beware that a ``part'' in the ``partitioned'' list \eqref{Eq:t_i-on-lobe-ell} can be empty; this happens when $j_p=0$, meaning that there are $t_i$ above $\ell$ on adjacent branches with no $t_i$ on $\ell$ between them.
The scenario where $k_{q} = k_p+j_p$ for a consecutive list of indices $q$ starting at $p+1$ happens when there are consecutive lobes above $\ell$ with no $t_i$ on them.

For the last preliminary notation, let $s_1,\dots,s_w$ be the coordinates of the first instances of the local roots of lobes above $\ell$.

Intuitively, with respect to the aligned map $\phi^\ell$ corresponding to lobe $\ell$, the $t_i$ in $\bt^{C,\ell}$ together with the local roots will be the actual input to $\phi^\ell$; those $t_i$ before them are at left infinity in the domain; those $t_i$ after them will be at right infinity in the domain; and each group of points of the form $t_{k_p+j_p+1}\leq \dots \leq t_{k_{p+1}}$ will have collided at the $p$-th local root that lies on $\ell$.

\subsubsection{The nonempty subset $S_\ell(C,\bt) \subset [n]$}
Define
\[
S_\ell(C,\bt):= \{k_1,k_1+1,\dots,k_1+j_1, \ k_2,k_2+1,\dots,k_2+j_2, \  \dots, \ k_{w+1},k_{w+1}+1,\cdots,k_{w+1}+j_{w+1}\}
\]
where we view $S_\ell(C,\bt)$ as a set, not a list, so some indices above may be repeated.
Since $S_\ell(C,\bt)$ contains at least $k_1$, it is a nonempty subset of $[n]$.
Conceptually, $S_\ell(C,\bt)$ is the subset of $[n]$ indexing the face of $\Delta^n$ where the $t_i$ on a branch starting at lobe $\ell$ have collided at a single point and where the $t_i$ to the left (respectively right) below lobe $\ell$ have collided with the left (respectively right) endpoint of $\I$.  See Figure \ref{F:mickey} for an example where $S_\ell(C,\bt)$ is listed for a fixed $C$ and $\bt$ and all values of $\ell$.

\begin{figure}[h]
	\raisebox{-8pc}{\includegraphics[scale=0.45]{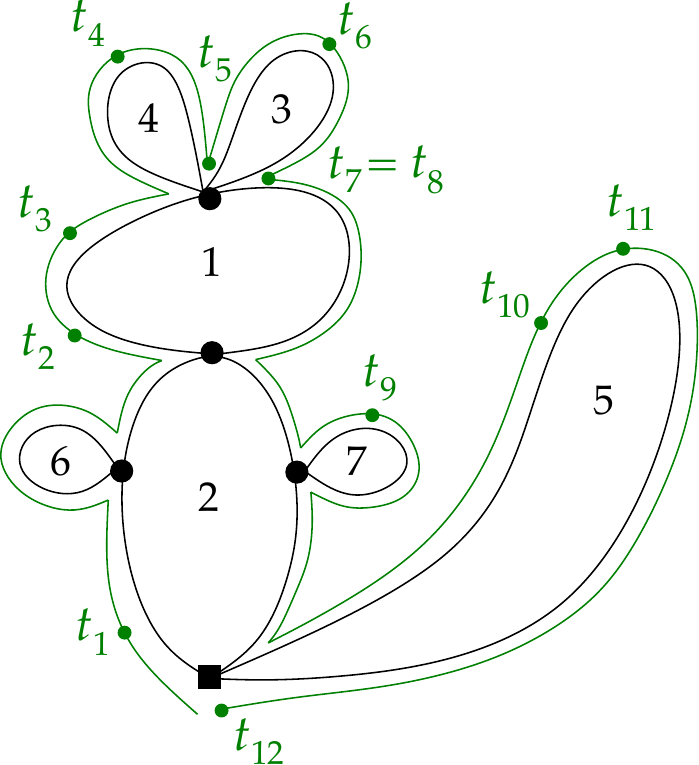}}
\qquad \qquad		
\begin{tabular}{l | l}
	$\ell$ & $S_\ell(C,\bt)$ \\
	\hline
	1 & \{1,2,3,4,8\} \\
	2 & \{0,1,8\} \\	
	3 & \{4,5,6,7,8\} \\
	4 & \{3,4\} \\
	5 & \{9,10,11,12\} \\
	6 & \{1\} \\
	7 & \{8,9\} 
\end{tabular}
	\caption{Above is an example of the calculation of $S_\ell(C,\bt)$ for the cactus $C\in \PCact(5)$ and $\bt \in \Delta^{12}$ pictured.  Recall that our convention is to put $t_i$ at local roots on the lobe ``as high up and far to the right as possible.''}
	\label{F:mickey}
\end{figure}

\subsubsection{The list of times $\bt^{C,\ell} \in \Delta^{|S_\ell(C, \bt)|-1}$}
Let $\bt^{C,\ell}$ be the list of $t_i$ shown in formula \eqref{Eq:t_i-on-lobe-ell} together with the coordinates of the local roots $s_1, \dots, s_w$, in order.
Explicitly, 
\[
\mathbf{t}^{C,\ell} := (t_{k_1+1}, \dots, t_{k_1+j_1}, \, s_1, \, 
t_{k_2+1}, \dots, t_{k_2+j_2}, \, s_2, \, \dots, \,
s_w, \, t_{k_{w+1}+1}, \dots, t_{k_{w+1}+j_{w+1}}).
\]
Thus the number of entries in $\bt^{C, \ell}$ is the cardinality of $S_\ell(C, \bt)$.

\subsubsection{The lobe parametrizations $\rho_\ell(C): \I \to \I$}
\label{S:lobe-param}
Keeping in mind the parametrization of the perimeter of $C$ by $\I$, we define $\rho_\ell(C): \I \to \I$ as the map that fixes the endpoints of $\I$, is constant on every lobe other than $\ell$, and is affine-linear with the same slope on the remaining subintervals of $\I$ (which correspond to the arcs along $\ell$).  In terms of the coendomorphism $g:S^1 \to (S^1)^{\x m}$ associated to $C$ as in Section \ref{S:partitions-and-coend}, the map $S^1 \to S^1$ induced by $\rho_\ell(C)$ is $\pi_\ell \circ g$.  
By abuse of notation, we also write $\rho_\ell(C)$ for the induced map $\Delta^n \to \Delta^n$ on non-decreasing $n$-tuples of points in $\I$.

\begin{figure}[h!]
\includegraphics[scale=0.6]{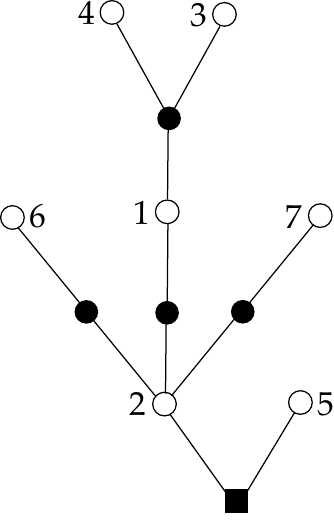} \qquad \qquad
\includegraphics[scale=0.6]{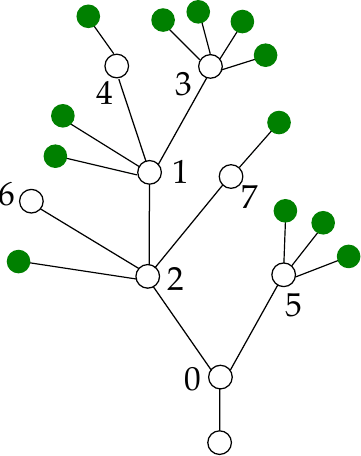}
\caption{On the left is the underlying b/w tree of the cactus $C$ from Figure \ref{F:mickey}, and on the right is the associated tree $T(C,\bt)$ in $\Upsilon$, where $\bt$ is the element of $\Delta^{12}$ shown in  Figure \ref{F:mickey}.}
\label{F:TCt}
\end{figure}

\subsubsection{The tree $T(C,\bt)$}
Define a tree $T(C,\bt) \in \mathrm{Ob}(\Upsilon)$, where $\Upsilon$ is the category from Definition \ref{D:upsilon}, by the following procedure.
Start with the b/w tree underlying the cactus $C$, as in Definition \ref{D:underlying-tree}.  Collapse every edge directed from a white vertex (upwards) to a black vertex so that the only remaining black vertex is the root.  Label each white vertex by the element of $\{1,\dots,m\}$ that labels the corresponding lobe.  Label the root by 0, and add a univalent vertex below it.  Finally, for each white vertex $\ell$, add an outgoing edge from $\ell$ to a new leaf for every $t_i$ lying on lobe $\ell$, in the order according to the appearance of $t_i$ with respect to local roots of higher lobes.  
See Figure \ref{F:TCt} for an example of $T(C,\bt)$ for a fixed $C$ and $\bt$.

The result $T(C,\bt)$ is a planted tree with internal vertices labeled by $0,\dots,m$ and $n$ leaves.  Therefore for any operad $\P$, one can associate to $T(C,\bt)$ the $(m+1)$-fold product of spaces $\prod_{\ell=0}^m\P(q_\ell)$, where $q_\ell$ is the number of outgoing edges from the vertex $\ell$ in $T(C,\bt)$.  Furthermore, the map $T(C,\bt)\to \gamma_n$ gives rise to a map $\bigcirc_{T(C,\bt)}:\prod_{i=0}^m\P(q_i) \to \P(n)$, as defined in formula \eqref{Eq:big-circ-T}.

\medskip

For any integer $p\geq1$, let $\mathbf{e}^p := (d^0)^p(\varnothing) \in \tC_p^{fr} \la \R^d \ra$, the configuration where the direction between any two points is one of $(\pm 1, 0, \dots, 0)$, and the frame at each point is $I_d$.  It can be obtained by the $p$-fold application of any coface maps to the empty configuration $\varnothing$.

For the global basepoint $b(C)$ of $C$, write $|b(C)|$ for the number of outgoing edges from the corresponding black root vertex.
 
\begin{definition}[Action of projective cacti on framed infinitesimal aligned maps]
\label{D:action}
For $m\geq 1$, $C\in \PCact(m)$, $\phi^1, \dots, \phi^m \in \tAM_n^{fr}$, and $\bt \in \Delta^n$, define
\begin{empheq}[box=\fbox]{align}
\label{Eq:Action}
\begin{split}
\alpha_m(C; \, \phi^1,\dots,\phi^m) (\bt)
:=& 
\bigcirc_{T} \left(
\mathbf{e}^{|b|}, 
\phi^1_{S_1}\left(\rho_1 \left(\bt^1\right)\right),\dots,
\phi^m_{S_m}\left(\rho_m \left(\bt^m\right)\right)
\right)\\
:=&
\bigcirc_{T(C, \bt)} \left(
\mathbf{e}^{|b(C)|}, 
\phi^1_{S_1(C,\bt)}\left(\rho_1(C) \left(\bt^{C,1}\right)\right),\dots,
\phi^m_{S_m(C,\bt)}\left(\rho_m(C) \left(\bt^{C,m}\right)\right)
\right).
\end{split}
\end{empheq}
In the first formula, we omit the dependence of various quantities on $C$ and $\bt$ for the sake of readibility, while in the second one, we include them for the sake of transparency.
\end{definition}

Applying the definition of $\bigcirc_{T(C,\bt)}$  to the right-hand side of formula \eqref{Eq:Action} yields an expression of the form
\begin{equation}
\label{Eq:ellth-term}
\cdots \left[ (\cdots ((\phi^\ell_{S_\ell}(\rho_\ell (\bt^\ell))\circ_{j_1+1} \cdots )\circ_{j_1+j_2+2} \cdots) \cdots) \circ_{j_1+\dots+j_{w}+w}  \cdots \right] \ \cdots
\end{equation}
where each ellipsis directly after a composition symbol $\circ_j$ stands for a configuration resulting from some of the remaining $\phi^1, \dots, \phi^m$ or an iterated composition of such configurations.  
More succinctly, we rewrite equation \eqref{Eq:Action} as
\begin{equation}
\label{Eq:ellth-term-xes}
\bx = \cdots \ \left[(\cdots ((\bx^\ell\circ_{j_1+1} \cdots )\circ_{j_1+j_2+2} \cdots) \cdots) \circ_{j_1+\dots +j_{w}+w}  \cdots \right] \ \cdots
\end{equation}
where each $\bx^\ell$ is a configuration of as many points as there are entries in $\bt^\ell$.  

When $m=2$, which is the setting of the multiplication and Browder bracket in homology studied in Section \ref{S:compatibility},  the formulas are more tractable.  
If both lobes of $C$ have their local root at the global basepoint $b$ and lobe 1 is to the left of lobe 2, then 
\begin{equation}
\label{Eq:arity-2-a}
\alpha_2(C; \phi^1, \phi^2)(\bt) = \Bigl(\mathbf{e}^2 \circ_2  \left(\phi^2_{S_2}(\rho_2 (\bt^2)\right) \Bigr) \circ_1 \left(\phi^1_{S_1}(\rho_1 (\bt^1)\right).
\end{equation}
If only lobe 1 has its local root at $b$, then  
\begin{equation}
\label{Eq:arity-2-a}
\alpha_2(C; \phi^1, \phi^2)(\bt) = 
\mathbf{e}^1 \circ_1 \Bigl(\phi^1_{S_1}(\rho_1 (\bt^1)) \circ_{j_1+1} \left(\phi^2_{S_2}(\rho_2 (\bt^2)\right) \Bigr).
\end{equation}
The other two possibilities for $m=2$ are accounted for by permuting the indices 1 and 2 on the right-hand sides above.

\begin{remark}
\label{R:conf-it}
The action $\alpha_2$ of 2-lobed cacti is related to the action $\alpha_2': \CC_1(1)^2 \x (\tAM_n^{fr})^2 \to \tAM_n^{fr}$ of pairs of intervals in previous joint work of the first author \cite[Definition 4.8]{BCKS:2017}.  
To explain the relationship, we use the spaces $\Ov^1(m)$ defined in Section \ref{S:Ov^1-defn} or rather just $\Ov^1(2)$, which is homeomorphic to a circle, as shown in Figure \ref{F:arity2}.  There is an inclusion $[\pi, 2\pi] \incl \CC_1(1)^2$, which first maps $[\pi, 2\pi]$ to $\Ov^1(2)$ as the bottom half of that circle.
Via the projection $\Ov^1(2) \to \Cact^1(2)$ illustrated in Figure \ref{F:arity2},
we can compare the actions induced by $\alpha_2$ and $\alpha_2'$ on the interval $[\pi, 2\pi]$.

These actions will differ by a homotopy on neighborhoods of the boundary points of the intervals $L_1, L_2 \in \CC_1(1)$. 
This is because $\alpha_2'$ involves the ``core'' $L_i^\circ$ of $L_i$ and a map $e_{L_i}$ that stretches $\I$ to fill in a gap resulting from shrinking $L_i$ to a point.  These elements appear because $\alpha_2'$ first maps infinitesimal configurations back to spatial ones, using \emph{infinitesimally thickened} configurations in $\R^3$ whose spatial coordinates all lie on the long axis.  In contrast, $\alpha_2$ stays purely within the infinitesimal realm, avoiding spatial and infinitesimally thickened configurations, which we find conceptually cleaner.  
The homotopy $H$ given by formula \eqref{Eq:shrinking-H} is similar to $e_{L_i}$, but it appears only in the map $q \circ ev_n : \K_d^{fr} \to \tAM_n^{fr}$ and the compatibility of the cactus and cubes actions (where we also use the cores $L_i^\circ$).
\end{remark}

\subsection{Proof of the action's required properties}

\begin{theorem}
\label{T:action}
Formula \eqref{Eq:Action} defines an action of the operad $\PCact$ on the space $\widetilde{AM}_n^{fr}$.  
\end{theorem}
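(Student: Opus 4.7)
The plan is to prove Theorem \ref{T:action} in two stages: first, verifying that for each input $(C;\phi^1,\dots,\phi^m)$ the formula \eqref{Eq:Action} produces a valid element of $\widetilde{AM}_n^{fr}$ and that $\alpha_m$ is continuous in all its arguments; second, verifying the identity, $\SS_m$-equivariance, and associativity axioms of Definition \ref{D:operad-action}.

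Well-definedness requires both continuity and the aligned condition. Continuity in the inputs $\phi^i$ is immediate from continuity of the Kontsevich operadic composition $\bigcirc_{T(C,\bt)}$. Continuity in $C$ and in $\bt$ is the substantive point, because the discrete data $(T(C,\bt), S_\ell(C,\bt), \bt^{C,\ell})$ changes as $\bt$ crosses a transition stratum of $\Delta^n$ (a coordinate $t_i$ passes a local root or the global basepoint, or two adjacent $t_i$'s coincide) or as $C$ crosses a cell boundary in $\PCact(m)$, i.e., the underlying b/w tree undergoes an angle collapse. At each such transition, one coordinate of $\rho_\ell(C)(\bt^{C,\ell})$ degenerates, placing the vector on a face of $\Delta^{|S_\ell(C,\bt)|-1}$; the aligned-map condition \eqref{Eq:aligned-square} for $\phi^\ell$ rewrites $\phi^\ell_{S_\ell}$ on that face as $d^j \circ \phi^\ell_{S_\ell'}$ for the corresponding smaller subset $S_\ell'$, while simultaneously $T(C,\bt)$ changes by an angle collapse or a reattachment of a new black vertex. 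Matching each insertion $d^j$ in $\tC_\bullet^{fr}\la\R^d\ra$ with the reorganization of $\bigcirc_T$ into $\bigcirc_{T'}$ shows that both one-sided limits agree. Once continuity is established, the aligned condition for the output follows from the same face-matching, since a face of $\Delta^n$ corresponds to a colliding configuration of $t_i$'s and the analysis identifies the restriction of $\alpha_m(C;\phi^1,\dots,\phi^m)$ to that face with the image under a coface map of a map into $\tC_{n-1}^{fr}\la\R^d\ra$.

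For the operad axioms, the identity axiom is direct: the unique element of $\PCact(1)$ has underlying b/w tree with one white leaf, so $T(C,\bt)$ has a single internal non-root vertex above a labeled-$0$ vertex of arity $1$; hence $\bigcirc_T = \circ_1$, $S_1(C,\bt)=[n]$, $\rho_1(C)=\mathrm{id}_\I$, and $\mathbf{e}^{|b(C)|}=\mathbf{e}^1$ is the operadic unit for the Kontsevich operad, giving $\alpha_1(\mathrm{id};\phi) = \phi$. The $\SS_m$-equivariance follows because the right $\SS_m$-action on $\PCact(m)$ relabels lobes of $C$, which induces the corresponding permutation of the white-vertex labels in $T(C,\bt)$ and of the tuple $(\phi^\ell_{S_\ell(C,\bt)}(\rho_\ell(C)(\bt^{C,\ell})))_{\ell=1}^m$ fed into $\bigcirc_{T(C,\bt)}$, matching the built-in equivariance of the Kontsevich operad. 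For associativity, given $C\in\PCact(m)$, $D^\ell\in\PCact(k_\ell)$, and aligned maps $\psi^{\ell,j}$, one checks that $T(C\circ(D^1,\dots,D^m),\bt)$ is obtained by grafting each $T(D^\ell,\rho_\ell(C)(\bt^{C,\ell}))$ above the white vertex $\ell$ of $T(C,\bt)$ (after removing that vertex's $t_i$-leaves), that the reparametrizations $\rho_{\ell,j}(D^\ell)\circ\rho_\ell(C)$ compose to the $\rho$-maps of the composite cactus on the relevant arcs, and that the subsets $S$ match accordingly. Associativity of operadic composition in the Kontsevich operad then delivers the commutative square of Definition \ref{D:operad-action}(c).

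The main obstacle will be the continuity in $\bt$ at transition strata and the parallel continuity in $C$ at cell boundaries of $\PCact(m)$, because the combinatorial conventions in the construction of $T(C,\bt)$ and $S_\ell(C,\bt)$ — most notably the ``as high up and far to the right as possible'' rule for placing $t_i$ at local roots — must be carefully reconciled with the aligned-map condition in each scenario. Once this matching between angle collapses of $T(C,\bt)$, coface maps $d^j$ in $\tC_\bullet^{fr}\la\R^d\ra$, and boundary behavior of $\rho_\ell(C)$ is systematically worked out, the alignment of the output and the associativity axiom both reduce to the analogous axioms in the Kontsevich operad, while the identity and equivariance axioms follow directly from inspection.
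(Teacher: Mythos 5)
Your plan is correct and follows essentially the same route as the paper's proof: stratify the domain by the combinatorial data $(T(C,\bt),\, S_\ell(C,\bt),\, \bt^{C,\ell})$, match one-sided limits at the transition strata using the face behavior of aligned maps together with the Kontsevich insertion maps, and reduce the identity, equivariance, and associativity axioms to the operad structure on the spaces $\tC_\bullet^{fr}\la \R^d \ra$. The one ingredient your sketch uses only implicitly is that the outer cofaces $d^0$ and $d^n$ of the infinitesimal cosimplicial model are insertions at $\mp\infty$ (Remark \ref{R:AM-behavior-at-pm-1}), which is exactly why the limit-matching at local roots works in $\tAM_n^{fr}$ and would fail in the spatial model.
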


\begin{proof}
We must check the following:
\begin{enumerate}
\item 
$\alpha_m$ is well defined, i.e., the map $\phi$ defined by \eqref{Eq:Action} is indeed an element of $\tAM_n^{fr}$, 
\item $\alpha_m$ is continuous, and
\item $\alpha_m$ satisfies the three conditions for an operad action.
\end{enumerate}


{\bf (1)  Proof that the action is well defined}:  We fix $C$ and $\phi^1, \dots, \phi^m$ and view $\phi$ as a map $\Delta^n\to \tC_n^{fr}\la\R^d\ra$.  We need to check that it satisfies the appropriate conditions on the boundary faces.  Since the condition for an intersection of faces is the logical conjunction of the conditions for those faces, it suffices to check the behavior on codimension-1 faces.  On the face $d^i\Delta^{n-1}$ where $t_i=t_{i+1}$, the condition is that $\phi(\bt)$ is the result of applying the coface $d^i$ to a configuration in $\tC_{n-1}^{fr}\la \R^d\ra$.  At each point $\bt$ in this face, $t_i$ lies in some lobe $\ell$ of $C$.  Now $\phi^\ell$ satisfies the required boundary condition on the corresponding face of $\Delta^{|S_\ell(C, \bt)|-1}$; suppose that this face is given by $t_j=t_{j+1}$.  The proof of this step is completed by noting that the insertion of a configuration in the image of $d^j$ is in the image of $d^i$ for the larger configuration.  Indeed, a coface map is a special case of the insertion maps, which are structure maps for the framed Kontsevich operad and in particular satisfy the operad associativity condition.  
Thus we have shown that $\phi\in \tAM_n^{fr}$. 


{\bf (2) Proof of the continuity of the action}:
It suffices to show that the adjoint 
\[
\widehat{\alpha}_m: \PCact(m) \x (\tAM_n^{fr})^m \x \Delta^n \longrightarrow \tC_n^{fr}\la\R^d\ra
\]
of $\alpha_m$ is continuous.
Let $T$ be a b/w tree with vertices labeled by $\{1,\dots, m\}$, and let $P$ be an ordered partition of $\{1,\dots, n\}$ into $m$ parts.
We partition the domain of $\widehat{\alpha}_m$ into subsets $\mathcal{S}(T,P)$ of elements $(C, \phi^1, \dots, \phi^m, \bt)$ such that $C$ has corresponding b/w tree $T$ and such that $C$ and $\bt=(t_1,\dots,t_n)$ determine the partition $P$, using the parametrization of the perimeter of $C$ by $\I$ and the labels of the lobes of $C$ by $1,\dots,m$.  On each $\mathcal{S}(T,P)$, the continuity of $\widehat{\alpha}_m$ follows from the continuity of each $\phi^\ell$ as a function of $\bt$, each $\rho_\ell$ as a function of $C$ and $\bt$, and each insertion operation $\circ_i$ as a function of the two input configurations.

The sets $\mathcal{S}(T,P)$ are neither open nor closed, so we consider their closures $\overline{\mathcal{S}(T,P)}$ and extend $\widehat{\alpha}_m$ to them by continuity.   We just need to check the agreement of these extensions on each intersection $\overline{\mathcal{S}(T,P)} \cap \overline{\mathcal{S}(T',P')}$.  
One of the two extensions is given by $\widehat{\alpha}_m$ itself, while the other is determined by a limit of its values.
An intersection point falls into one or both of the two cases below.
The main point of the proof of continuity at a boundary point will be the behavior of an aligned map in $\tAM_n^{fr}$ when input configuration points approach the boundary of $\I$.  The required behavior in turn comes from the property of the first and last coface maps in $\tC_\bullet^{fr}\la\R^d\ra$ stated in Remark \ref{R:AM-behavior-at-pm-1}.  
Thus for continuity, it is crucial that we are using the infinitesimal rather than spatial mapping space model.  
We provide a more detailed discussion below.

{\bf Case (a)}: Suppose some $t_i$ on a lobe $\ell$ approaches the local root of a lobe $k$ directly above $\ell$.  This corresponds to a change of the partition $P$.
In the limit, the corresponding output configuration point $x_i$ has entered the infinitesimal configuration $\bx^k$ that is 
plugged into $x^\ell_j$ for some $j$, but $x_i$ lies infinitely far from all the other points in $\bx^k$.
More precisely, with reference to the defining data of an element in $\tC_n^{fr}\la\R^d\ra$, all the points in $\bx^k$ have the same location as $x_i$, they share the same unit vector direction to $x_i$, and the frame at $x_i$ is the frame at $x^\ell_j$.
Because of the boundary behavior of infinitesimal aligned maps, 
the same is true of the configuration $\widehat{\alpha}_m(C, \phi^1, \dots, \phi^m,\bt)$ when $t_i$ is at that local root, and the values of all other direction vectors and frames are also their limits along this approach.  

More generally, at any point in an intersection $\overline{\mathcal{S}(T,P)} \cap \overline{\mathcal{S}(T,P')}$, some nonzero number of $t_i$ lie at local roots of the cactus $C$.  Similar arguments show that the value of $\widehat{\alpha}_m$ at such a point agrees with the limit as any number of those $t_i$ approach those local roots along the lobes directly below them.  Not much modification is needed even if multiple $t_i$ approach the same local root from the same direction. 
Indeed, relative rates of approach are not recorded in $\tC_n^{fr}\la\R^d\ra$, so collinear configurations of a given number of points in a given direction can differ only by the framings on the points, and the framings corresponding to points $t_i$ which have collided coincide under an aligned map.  This completes the proof of case (a).

{\bf Case (b)}: Suppose that the b/w tree $T$ changes.  There are two subcases according to the two types of angle collapses described in Definition \ref{D:cat-of-bw-trees}, i.e., according to whether $e_0$ is not or is among the two edges that are glued.  The first subcase corresponds to two conglomerations of points colliding, while the second corresponds to a conglomeration of points going to infinity relative to the other ones on the same lobe.   

For the subcase (i), suppose lobes $k$ and $k'$ approach each other on lobe $\ell$.  
In this case the formula for the action is unchanged.  Indeed, in constructing $T(C,\bt)$ from the b/w tree underlying $C$, both black vertices involved are collapsed to the same white vertex.
Thus continuity holds in this case.

For the subcase (ii), suppose the local root $r_k$ of a lobe $k$ approaches the local root $r_\ell$ of the lobe $\ell$ directly below it.
Along such an approach, $\bx^k$ remains infinitely far from points $x_i$ coming from those $t_i$ on lobe $\ell$.  That is, for any such point $x_i$, a single unit vector specifies the direction from it to any point in $\bx^k$.  The same is true of $\widehat{\alpha}_m(C;\, \phi^1, \dots, \phi^m)$ at the cactus $C$ where $r_k=r_\ell$, by the  properties of infinitesimal aligned maps on $\d \I$.  
In other words, there is no distinction in $\tC_n^{fr}\la\R^d\ra$ between the insertion of $\bx^k$ at infinity in $\bx^\ell$ and the insertion of $\bx^k$ and $\bx^\ell$ into an infinitesimal two-point configuration.  
Thus, the limit equals the value along such an approach, as desired.

In general, multiple angle collapses in $T$ may happen simultaneously, but similar considerations show that in these cases too, the limit of $\widehat{\alpha}_m$ as $C$ approaches such a point is the value of $\widehat{\alpha}_m$ at that point.  
Much like in case (a), little modification of the arguments is needed even if multiple local roots approach the same point.  
This completes the proof of case (b).

Approaches as in cases (a) and (b) may happen simultaneously.  
As before, the same type of argument covers the approach of multiple points or configurations to points at infinity in an infinitesimal configuration.
(For example, suppose two lobes $k$ and $k'$ collide along $\ell$ at a point where some $t_i$ lies, with $k$ approaching from the left and $k'$ from the right. Then the limit along such a path and the value at the endpoint are both the configuration where $\bx^k$, $x_i$, and $\bx^{k'}$ are located at the same point; the direction vector from any point in $\bx^k$ to $x_i$ is the same as that from $x_i$ to any point in $\bx^{k'}$; and the frame at $x_i$ is the frame of the points $x^\ell_j$ and $x^\ell_{j'}$ into which $\bx^k$ and $\bx^{k'}$ are inserted.)
This completes the proof of continuity.


{\bf (3) Proof that the action is an operad action}:  
We need to check the (a) identity, (b) equivariance, and (c) associativity conditions that define an operad action.

First, for the single class of $1$-lobe cactus $C \in \PCact(1)$,  we have $S_1(C;\bt)=[n]$, $\rho_1(C) = \mathrm{id}_{\I}$, and $\bt^1=\bt$, so $\alpha_1(C; \phi^1) = \phi^1$, thus verifying the identity condition (a) of an operad action.

Next, for equivariance, let $\sigma \in \mathfrak{S}_m$.  
Write 
\[
\alpha_m(C; \, \phi^1, \dots, \phi^m)(\bt) 
= \cdots \left[ (\cdots (\phi^\ell_{S_\ell}(\rho_\ell (\bt^\ell))\circ_{j_1(\ell)+1} \cdots ) \cdots) \circ_{j_1(\ell)+\dots + j_{w(\ell)}(\ell) +w(\ell)}  \cdots \right] \cdots
\]
It suffices to consider the action by $\sigma^{-1}$, which we do only for notational brevity below.
Indeed, we have 
\begin{align*}
&\alpha_m(C \sigma^{-1}; \, \phi^1, \dots, \phi^m)(\bt)= \\
  &\cdots \left[ (\cdots (\phi^{\sigma(\ell)}_{S_{\sigma(\ell)}}(\rho_{\sigma(\ell)} (\bt^{\sigma(\ell)}))\circ_{j_1(\sigma(\ell))+1} \cdots ) \cdots) \circ_{j_1(\sigma(\ell))+\dots+j_{w(\sigma(\ell))}(\sigma(\ell))+w(\sigma(\ell))}  \cdots \right] \cdots =\\
&\alpha_m(C; \, \phi^{\sigma(1)}, \dots, \phi^{\sigma(m)})(\bt) 
=\alpha_m(C; \, \sigma^{-1}(\phi^{1}, \dots, \phi^{m}))(\bt)  
\end{align*}
as required.  

Finally, for the associativity condition, suppose that $C \in \PCact(m)$, $C' \in \PCact(m')$, and $\ell \in \{1,\dots, m\}$.
We want to show that
\begin{align}
\label{Eq:AssocAction}
\begin{split}
&\alpha_{m+m'-1}(C \circ_\ell C' ; \, \phi^1, \dots, \phi^{m+m'-1})(\bt)
= \\
&\alpha_m(C; \, \phi^1, \dots, \phi^{\ell-1}, \alpha_{m'}(C'; \, \phi^\ell, \dots, \phi^{\ell+m'-1}), \phi^{\ell+m'}, \dots, \phi^{m+m'-1}).
\end{split}
\end{align}
Equivalently, we must verify that 
\begin{align*}
\begin{split}
&\bigcirc_{T(C\circ_\ell C', \bt)} \left(
\mathbf{e}^{|b|},
\phi^1_{S_1}\left(\rho_1 \left(\bt^1\right)\right),\dots,
\phi^{m+m'-1}_{S_{m+m'-1}}\left(\rho_{m+m'-1} \left(\bt^{m+m'-1}\right)\right)
\right)
= \\
&\bigcirc_{T(C, \bt)} \left(
\mathbf{e}^{|b|},
\phi^1_{S_1}\left(\rho_1 \left(\bt^1\right)\right),\dots,
\alpha_{m'}(C'; \, \phi^\ell, \dots, \phi^{\ell+m'-1}), \dots, 
\phi^{m+m'-1}_{S_{m+m'-1}}\left(\rho_{m+m'-1} \left(\bt^{m+m'-1}\right)\right)
\right).
\end{split}
\end{align*}
Consider the functor from the category $\Upsilon$ determined by the framed Kontsevich operad.
The left-hand side above is an evaluation of the map of spaces arising from the collapse $\chi$ of $T(C \circ_\ell C')$ to the corolla $\gamma_{m+m'-1}$.  The right-hand side above is an evaluation at the same point of the map of spaces arising from the composition of two morphisms: $\chi'$ which collapses the subtree corresponding to $C'$ to a corolla and $\chi''$ which collapses the resulting tree to $\gamma_{m+m'-1}$.
Since $\chi = \chi'' \circ \chi'$, the corresponding maps of spaces determined by the Kontsevich operad are equal too.
In other words, the key point is that the spaces $\tC_n^{fr} \la\R^d\ra$ form an operad.  
Thus associativity holds.
\end{proof}

\begin{proposition}
\label{P:proj-compatible}
The actions of $\PCact$ on the spaces $\tAM_n^{fr}$, $n\in \N$, are compatible with the projections 
$\tAM_n^{fr} \to \tAM_{n-1}^{fr}$.  That is, the following square commutes:
\[
\xymatrix{
\PCact(m) \x \left(\tAM_n^{fr}\right)^m \ar[r] \ar[d] & \tAM_n^{fr}  \ar[d] \\
\PCact(m) \x \left(\tAM_{n-1}^{fr}\right)^m \ar[r] & \tAM_{n-1}^{fr}
}
\]
\end{proposition}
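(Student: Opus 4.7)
The plan is to reduce the commutativity of the square to a pointwise identity between the top-level maps of the two composite actions, and then to establish that identity via the formula in Definition~\ref{D:action}, using the coherence relations \eqref{Eq:aligned-square} for the inputs $\phi^\ell$ together with operadic associativity in the framed Kontsevich operad. Concretely, the projection $\pi:\tAM_n^{fr}\to\tAM_{n-1}^{fr}$ restricts $(\phi_S)_S$ to those $S\subseteq[n-1]$, and the coherence square \eqref{Eq:aligned-square} for the inclusion $\delta^n:[n-1]\hookrightarrow[n]$ gives $\phi_{[n]}\circ d^n = d^n\circ\phi_{[n-1]}$, where $d^n:\Delta^{n-1}\to\Delta^n$ sends $\bt$ to $(\bt,1)$ and $d^n:\tC_{n-1}^{fr}\la\R^d\ra\to\tC_n^{fr}\la\R^d\ra$ is $c\mapsto\tb\circ_1 c$, which appends a point ``at right infinity.'' Because the latter map is injective, it suffices to show, for all $C\in\PCact(m)$, $\bt\in\Delta^{n-1}$, and $\phi^1,\dots,\phi^m\in\tAM_n^{fr}$, that
\[
\alpha_m(C;\phi^1,\dots,\phi^m)(d^n(\bt)) = d^n\bigl(\alpha_m(C;\pi\phi^1,\dots,\pi\phi^m)(\bt)\bigr).
\]

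Next I would compare the ingredients of formula \eqref{Eq:Action} for the input $d^n(\bt)=(t_1,\dots,t_{n-1},1)$ versus $\bt$. The new coordinate $t_n=1$ lies at the right endpoint of the perimeter of $C$, which by the ``as high up and as far to the right as possible'' convention is placed on the unique rightmost lobe $\ell_*$ among those whose local root is the global basepoint. Accordingly, for $\ell\neq\ell_*$ the data $S_\ell$, $\bt^{C,\ell}$, and $\rho_\ell(C)$ are unchanged, while $S_{\ell_*}(C,d^n(\bt))=S_{\ell_*}(C,\bt)\cup\{n\}$, the tuple $\bt^{C,\ell_*}$ gains an entry $1$ at its last position (fixed by $\rho_{\ell_*}(C)$), and $T(C,d^n(\bt))$ is obtained from $T(C,\bt)$ by grafting an extra leaf as the rightmost child of $\ell_*$, which becomes the $n$-th (rightmost) leaf of $\gamma_n$. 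Setting $\bx^\ell:=\phi^\ell_{S_\ell(C,\bt)}(\rho_\ell(C)(\bt^{C,\ell}))$ and applying the coherence square \eqref{Eq:aligned-square} for $\phi^{\ell_*}$ at the inclusion $S_{\ell_*}(C,\bt)\subset S_{\ell_*}(C,\bt)\cup\{n\}$ (which corresponds to the last coface $d^{|S_{\ell_*}(C,\bt)|}$), the new input at $\ell_*$ is exactly $\tb\circ_1 \bx^{\ell_*}$. Since $(\pi\phi^\ell)_{S_\ell(C,\bt)}=\phi^\ell_{S_\ell(C,\bt)}$ because $S_\ell(C,\bt)\subseteq[n-1]$, the task reduces to establishing the operad-level identity
\[
\bigcirc_{T(C,d^n(\bt))}\bigl(\mathbf{e}^{|b(C)|},\dots,\tb\circ_1 \bx^{\ell_*},\dots\bigr) = \tb\circ_1\bigcirc_{T(C,\bt)}\bigl(\mathbf{e}^{|b(C)|},\dots,\bx^{\ell_*},\dots\bigr).
\]

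The main obstacle is this last identity, whose verification amounts to showing that the operation $\tb\circ_1(-)$ acting on the rightmost slot commutes through all of the other insertions and may be pulled outside. I would prove it by induction on the depth of $\ell_*$ in $T(C,\bt)$: at each step along the rightmost branch from the root down to $\ell_*$, the point being tracked has direction $(1,0,\dots,0)$ from every other point in the partial composition---initially because every direction in $\mathbf{e}^{|b(C)|}$ is $(1,0,\dots,0)$, and inductively because any subsequent insertion occurs at a position strictly to its left and hence inherits this direction---so operadic associativity in the framed Kontsevich operad lets us commute $\tb\circ_1(-)$ through each successive insertion out to the outermost position.
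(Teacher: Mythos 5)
Your proposal is correct and follows essentially the same route as the paper's proof: compare the ingredients of formula \eqref{Eq:Action} at $(\bu,1)$ versus $\bu$, note that only the rightmost root-level lobe is affected, and apply the aligned-map face condition there, with the remaining operad-level bookkeeping (pulling $\tb\circ_1(-)$ to the outside) being exactly what the paper leaves implicit via its reference to \cite{BCKS:2017}. The only remark is that your closing induction on the depth of $\ell_*$ is vacuous, since $\ell_*$ always lies directly above the root of $T(C,\bt)$, so the single verification $\mathbf{e}^{|b|}\circ_{|b|}\tb=\tb\circ_1\mathbf{e}^{|b|}$ (standard directions and identity frames) together with operadic associativity suffices.
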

\begin{proof}
The proof is similar to that of \cite[Proposition 5.2]{BCKS:2017}.
Let $\bu:=(t_1, \dots, t_{n-1})$ and $\bt:=(t_1,\dots,t_{n-1},1)$.  
Each composition is given by a formula of the form \eqref{Eq:Action}.  
The one through the upper-right corner has terms of the form 
$\phi^\ell_{S_\ell(C,\bt)} (\rho_\ell(\bt^\ell))$, 
while the one through the lower-left corner has terms of the form 
$(\phi^\ell|_{d^n \Delta^{n-1}})_{S_\ell(C,\bu)} (\rho_\ell(\bu^\ell))$, where we have suppressed the dependence of $\rho_\ell$, $\bt^\ell$, and $\bu^\ell$ on $C$.  
For all $\ell$ except the one corresponding to the rightmost lobe of $C$, we have 
$\phi^\ell_{S_\ell(C,\bt)} = (\phi^\ell|_{d^n \Delta^{n-1}})_{S_\ell(C,\bu)}$ and 
$\bt^\ell = \bu^\ell$.  Conceptually, this is because in both cases, the $t_i$ on lobe $\ell$ coincide, as do those to the left of $\ell$, those to the right of $\ell$, and those on any branch emanating from $\ell$.
If $\ell$ corresponds to the rightmost lobe of $C$, then $\bt^\ell=(\bu^\ell, 1)$, and 
$(\phi^\ell|_{d^n \Delta^{n-1}})_{S_\ell(C,\bu)}$ is the restriction of $\phi^\ell_{S_\ell(C,\bt)}$ to the face where the last coordinate is 1.
\end{proof}

The statements in Theorem \ref{T:A} about the existence of an action and its compatibility with the projections in the Taylor tower are now proven.  The remaining parts of Theorem \ref{T:A} are proven in Section \ref{S:compatibility}.

\section{The space of normalized overlapping intervals}
\label{S:normalized-overlapping}

Completing the proof of Theorem \ref{T:A} requires the equivalences stated in Theorem \ref{T:B}.
In this Section, we prove Theorem \ref{T:B}.  We construct a space $\Ov^1(m)$ whose inclusion into $\Ov(m)$ is a homotopy equivalence and which projects to $\Cact^1(m)$ by a homotopy equivalence.  In Section \ref{S:Ov^1-defn}, in particular in Definition \ref{D:normalized-overlapping} (and Remark \ref{R:alt-def-Ov1}), we define this space, whose elements we call normalized overlapping intervals.  In Section \ref{S:projection-onto-Cact^1}, we define the projection $\Ov^1(m) \to \Cact^1(m)$ and show that this map and the inclusion $\Ov^1(m) \incl \Ov(m)$ are homotopy equivalences.

\subsection{Definitions of normalized overlapping intervals}
\label{S:Ov^1-defn}

Recall the definitions of {\em above, below, left, and right} in a planted tree  from Section \ref{S:trees}, as well as the category $\T_m$ of b/w trees from Definition \ref{D:cat-of-bw-trees}.  

\begin{definition}
\label{D:T_sigma}
Let $\sigma \in \SS_m$ and $T \in \T_m$.  We say that $T$ and $\sigma$ are {\em compatible} if for every directed path in $T$ from the root to a leaf, the sequence of labels of the white vertices is a subsequence of $(\sigma(1), \dots, \sigma(m))$.  
Let $\T_\sigma$ be the full subcategory of $\T_m$ of trees $T$ compatible with $\sigma$.  For any $j\in \{1,\dots, m\}$, we call $\sigma^{-1}(j)$ the {\em $\sigma$-height} of $w_j$.
\end{definition}

In general, a permutation $\sigma$ is compatible with multiple trees $T$, and a tree $T$ may be compatible with multiple permutations $\sigma$.
If $T' \prec T$ is a morphism in $\T_m$, then the set of $\sigma$ compatible with $T$ are a subset of those compatible with $T'$.  Thus if a cactus $C$ belongs to closed cells $\mathcal{E}(T)$ for multiple trees $T$, its underlying tree (as in Definition \ref{D:underlying-tree}) is compatible with the largest set of permutations $\sigma$.

\begin{definition}
\label{D:lambda-and-rho}
Let $\sigma \in \mathfrak{S}_m$, and let $T \in \T_\sigma$.
 For a vertex $v$ in $T$, let $\alpha(v)$ be the number of white vertices above $v$ if $v$ is a black vertex and the number of white vertices above $v$ plus one if $v$ is a white vertex.
Suppose $i$ is a white vertex in $T$.
Let $\lambda(i)$ be the set of white vertices in $T$ to the left of $i$
and $\rho(i)$ the set of white vertices in $T$ to the right of $i$.
Define $\lambda^+(i)$ as the subset of white vertices $w$ in $\lambda(i)$ such that 
\begin{enumerate}
\item 
$\sigma^{-1}(w) > \sigma^{-1}(i)$ and 
\item
if $w'$ is to the right of $w$ and to the left of $i$, then $\sigma^{-1}(w') > \sigma^{-1}(i)$.
\end{enumerate}
Define  $\rho^+(i) \subset \rho(i)$ similarly, with ``right'' and ``left'' swapped but inequalities in the same directions.
\end{definition}

\begin{example}
\label{Ex:19tree}
We give a few examples of the numbers and sets defined in Definition \ref{D:lambda-and-rho} using the tree $T$ in Figure \ref{F:19tree}, where $m=19$.  We first list them for the white vertices $3$, $4$, $6$, and $11$:
\begin{align*}
i&=3:
& \alpha(3)&=1,
& \lambda(3)&=\{2,1,4\}, & \lambda^+(3)&=\{4\}, & \rho(3)&=\varnothing, & 
\rho^+(3)&=\varnothing.  \\
i&=4:
& \alpha(4)&=4,
& \lambda(4)&=\{2,1\}, & \lambda^+(4)&=\varnothing, & \rho(4)&=\{3\}, & 
\rho^+(4)&=\varnothing.  \\
i&=6:
& \alpha(6)&=6,
& \lambda(6)&=\{10,9\}, & \lambda^+(6)&=\{10,9\}, & \rho(6)&=\{11,13,12\}, & 
\rho^+(6)&=\{11,13,12\}.  \\
i&=11:
& \alpha(11)&=1,
& \lambda(11)&=\{10,9,6\}, & \lambda^+(11)&=\varnothing, & \rho(11)&=\{13,12\}, & 
\rho^+(11)&=\{13,12\}.  
\end{align*}
Considering $\alpha$ for some black vertices, we have $\alpha(v_r)=19$, $\alpha(v_1)=11$, $\alpha(v_2)=1$, $\alpha(v_3)=3$, and $\alpha(v_6)=3$.
\end{example}

\begin{figure}[!h]
		\centering
		\includegraphics[scale=0.58]{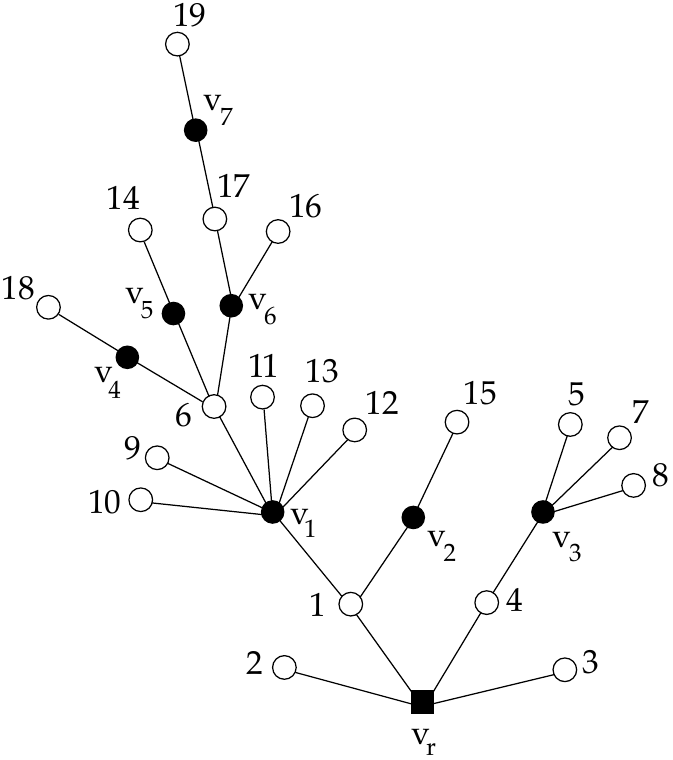}
		\caption{An element $T$ in $\mathcal{T}_{\sigma}$ where $\sigma=\mathrm{id} \in \SS_{19}$. The black vertices are labelled only for easy reference in Example \ref{Ex:19tree}.
		}
		\label{F:19tree}
\end{figure}

We will now define the subspace $\Ov^1(m) \subset \Ov(m)$ of normalized overlapping intervals.  Roughly, the key property of $\Ov^1(m)$ is that for an interval $L_i$ to start sliding past an interval $L_j$, $L_j$ must first grow so that every point in $L_i$ lies above a point in $L_j$.  More intuition may be gained from Figure \ref{F:19-ovin} and a shorter definition in Remark \ref{R:alt-def-Ov1}, but we will rely on the details in Definition \ref{D:normalized-overlapping} to verify that $\Ov^1(m) \simeq \Ov(m)$.

\begin{definition}[Normalized overlapping intervals]
\label{D:normalized-overlapping}
Let $\sigma \in \mathfrak{S}_m$, and let $T\in \T_\sigma$.
Define $\Ov^1(m,\sigma,T)$ as the set of equivalence classes of elements 
$(\bL, \sigma, \bK) = \bigl( ([x_i,y_i])_{i=1}^m, \, \sigma, \, ([x_v,y_v])_{v\in B(T)} \bigr)$ 
such that the endpoints $x_i$, $y_i$, $x_v$, and $y_v$ satisfy conditions defined below inductively in terms of the tree structure of $T$, starting at the root and going upwards:  
\begin{itemize}
\item
If $v$ is the root, we require that $[x_v, y_v]=\I$.
\end{itemize}
\begin{itemize}
\item
Suppose $i$ is a white vertex directly above a black vertex $v$ such that $[x_v, y_v]$ has been chosen.  Set 
\begin{align*}
\begin{split}
a_i&:= x_v + \frac{|\I|}{m} \sum_{j \in \lambda(i) \setminus \lambda^+(i)} \alpha(j), \\
c_i&:= y_v - \frac{|\I|}{m} \sum_{j \in \rho(i)} \alpha(j), \qquad \qquad \text{ and } 
\end{split}
\begin{split}
b_i&:= x_v + \frac{|\I|}{m} \sum_{j \in \lambda(i)} \alpha(j), \\
d_i&:= y_v - \frac{|\I|}{m} \sum_{j \in \rho(i) \setminus \rho^+(i)} \alpha(j), 
\end{split}
\end{align*}
where $\lambda(i)$, $\rho(i)$, $\lambda^+(i)$, and $\rho^+(i)$ are as in Definition \ref{D:lambda-and-rho}.
The conditions on $x_i$ and $y_i$  are that 
\begin{align*}
a_i \leq x_i \leq b_i 
&&\text{and} && 
c_i \leq y_i \leq d_i.
\end{align*}

\item 
Suppose $v_1, \dots, v_n$ are the black vertices directly above a white vertex $i$, listed in the order determined by the planar embedding of $T$ in the upper-half plane.  
Let $v$ be the black vertex directly below $i$, and suppose that $[x_v, y_v]$ has already been chosen.
The conditions on the intervals $[x_{v_j}, y_{v_j}]$ with $j=1,\dots,n$ are that 
\begin{enumerate}
\item each $[x_{v_j}, y_{v_j}]$ has length $\alpha(v_j) {|\I|} / {m}$, 
\item each $[x_{v_j}, y_{v_j}]$ is contained in $[b_i, c_i]$, and 
\item for each $j\in \{1,\dots, n-1\}$, we have $y_{v_j}\leq x_{v_{j+1}}$, i.e., $[x_{v_j},y_{v_j}]$ is to the left of $[x_{v_{j+1}},y_{v_{j+1}}]$ and these two intervals intersect in at most at one point.
\end{enumerate}
Here $b_i$ and $c_i$ are determined by $x_v$ and $y_v$ as above.
\end{itemize}
Finally, define the equivalence relation on these elements by setting 
\[
(\bL, \sigma, \bK) \sim (\bL', \sigma', \bK')
\]
if $(\bL, \sigma)$ and $(\bL', \sigma')$ represent the same element of $\Ov(m)$.  
\end{definition}

For example, Figure \ref{F:19-ovin} shows an element of $\Ov^1(19,\mathrm{id}, T)$ for the tree $T$ shown in Figure \ref{F:19tree}.
By the equivalence relation in Definition \ref{D:normalized-overlapping}, the map $\Ov^1(m,\sigma,T) \to \Ov(m)$ given by $[\bL, \sigma, \bK] \mapsto [\bL, \sigma]$ is injective, so we view $\Ov^1(m,\sigma,T)$ as a subspace of $\Ov(m)$.
We next define
\[
\Ov^1(m):= 
\bigcup_{\sigma\in \mathfrak{S}_m} \ 
\bigcup_{T\in \T_\sigma} 
\Ov^1(m,\sigma,T)
=
\bigcup_{T\in \T_m} \ 
\bigcup_{\sigma: T \in \T_\sigma} 
\Ov^1(m,\sigma,T).
\]
For example, $\Ov^1(2)$ is illustrated in Figure \ref{F:arity2}.
This union is not disjoint.  
Indeed, if $T' \prec T$, 
then $\Ov^1(m,\sigma,T) \cap \Ov^1(m,\sigma,T')\neq \varnothing$, 
much like the cells $\mathcal{E}(T)$ and $\mathcal{E}(T')$ in $\Cact^1(m)$ are not disjoint.  
On the other hand, those cells satisfy the condition $\mathcal{E}(T') \subset \mathcal{E}(T)$, whereas $\Ov^1(m,\sigma,T') \not\subset \Ov^1(m,\sigma,T)$.  For example, for $m=2$, two such spaces are edges which intersect in a vertex.
Similarly, if $\sigma$ and $\sigma'$ are both compatible with $T$, then $\Ov^1(m,\sigma,T) \cap \Ov^1(m,\sigma',T)$ may be nonempty.

\begin{figure}[!h]
	\includegraphics[scale=0.6]{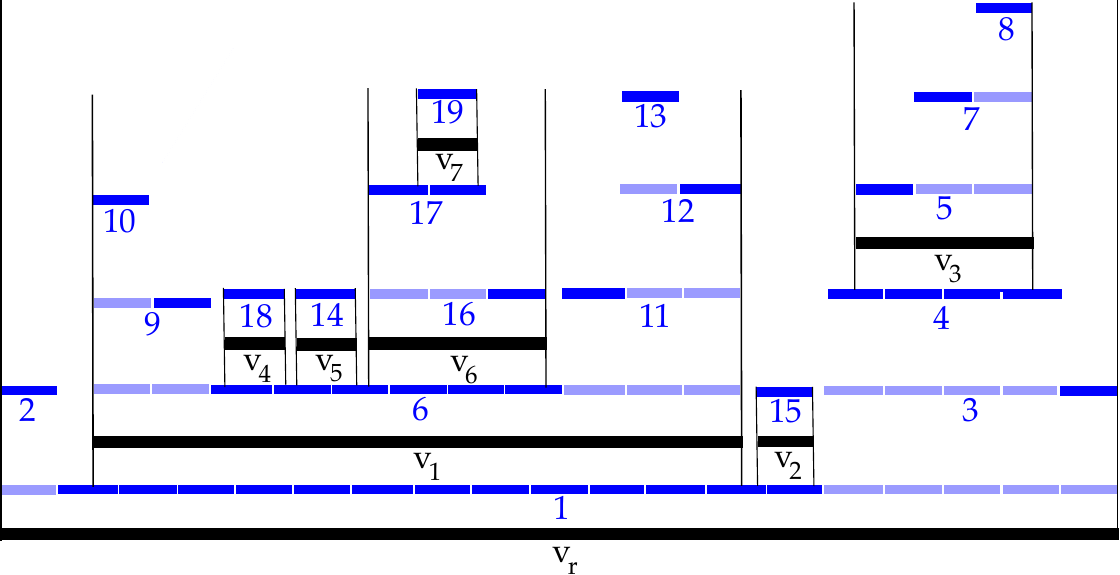}
	\caption{This picture indicates all possible elements of the form $[\bL,\mathrm{id}, \bK] = \left[([x_i,y_i])_{i=1}^m, \, \mathrm{id}, \, ([x_v,y_v])_{v\in B(T)}\right]$ in $\Ov^1(19, \mathrm{id}, T)$ for the tree $T$ in Figure \ref{F:19tree} and a fixed $\bK$.  
	The interval $[x_{v_j},y_{v_j}]$ for each black vertex $v_j$ is colored in black.  
	The interval $[b_i,c_i]$ for each white vertex $w_i$ is colored in dark blue.  
	The intervals $[a_i, b_i]$ and $[c_i, d_i]$ are colored in light blue.  
	The blue intervals are segmented only so that their lengths can be easily read: each segment has length $|\I|/19$. 
	Thus $[x_i,y_i]$ can be any interval containing the dark blue interval $[b_i,c_i]$ and with endpoints in the adjacent light blue intervals. 
	To vary $\bK$, we allow any $[x_{v_j},y_{v_j}]$ to slide within $[b_i,c_i]$ (but not on the light blue part of $[x_i,y_i]$), where $w_i$ is the white vertex directly below $v_j$.  All intervals above $[x_{v_j}, y_{v_j}]$ would move with it. 
	}
	\label{F:19-ovin}
 \end{figure}

The symmetric sequence $\{\Ov^1(m)\}_{m\in \N}$  does {\em not} form an operad.   For example, 
\[
\bigl( \bigl([-1,1], \, [-{1}/{2}, {1}/{2}]\bigr), \, \mathrm{id} \bigr) \circ_1 
\bigl( \bigl([-1,0], \, [0,1]\bigr), \, \mathrm{id} \bigr) \notin \Ov^1(3)
\]
even though both elements on the left-hand side are in $\Ov^1(2)$.

In a representative $(\bL, \sigma, \bK) \in \Ov^1(m, \sigma, T)$, the intervals $[x_v,y_v]$ that constitute $\bK$ are completely determined by the tree $T$ and the intervals $[x_i, y_i]$ that constitute $\bL$.
Indeed, from the definitions of $a_i$, $b_i$, $c_i$, and $d_i$, $x_v$ and $y_v$ are respectively the leftmost $x_i$ and rightmost $y_i$ over all white vertices $i$ directly above $v$.
Some elements of $\Ov^1(m)$ can be represented by multiple trees $T$, and for different choices of $T$, we get different numbers of intervals $[x_v,y_v]$.
We will see in Definition \ref{D:proj-to-cacti} that this parallels the situation of a cactus lying in the closures of multiple cells $\mathcal{E}(T)$ of various dimensions.

With a slight overloading of notation, we define the following two spaces:
\begin{align*}
\Ov^1(m,\sigma) := \bigcup_{T\in \T_\sigma} \Ov^1(m,\sigma,T); \qquad
\Ov^1(m,T) := \bigcup_{\sigma : T \in \T_\sigma} \Ov^1(m,\sigma,T).
\end{align*}
The space $\Ov^1(m)$ is the union of the former over all $\sigma \in \SS_m$ or the union of the latter over all $T \in \T_m$.

\begin{remark}[Alternative definition of $\Ov^1(m)$]
\label{R:alt-def-Ov1}
Here is a way to view $\Ov^1(m)$, without reference to trees.
An element $[(L_1,\dots,L_m), \sigma] \in \Ov(m)$  lies in $\Ov^1(m)$ if and only if for each $i\in \{1,\dots,m\}$, the set of points in $L_i$ for which no $L_j$ lies above $L_i$ has total length $|\I|/m$.  Informally, the defining condition is that ``each little interval receives an equal amount of sunlight.''  To prove that it is a necessary condition for being in $\Ov^1(m)$, one can first check that it holds if each $L_i=[x_i,y_i]$ is $[b_i,c_i]$, whose width is $ \alpha(i) |\I|/m$; then note that if some $L_i$ is any wider, the part outside $[b_i,c_i]$ must lie under another $L_j$.  
We will not need the sufficiency of this condition, but it can be proven as follows.  If $[\bL, \sigma]\in \Ov(m)$ satisfies it,  first construct an associated tree $T$ (and hence the black intervals $[x_v,y_v]$ and the points $a_i$, $b_i$, $c_i$, and $d_i$), from the leaves downward, by roughly considering the lowest $L_j$ directly below each $L_i$; then one can check that since $L_i$ is the highest interval for length $|\I|/m$, it must be at least as wide as $[b_i,c_i]$ and no wider than $[a_i,d_i]$.
\end{remark}

 \begin{figure}[!h]
	\includegraphics[scale=0.5]{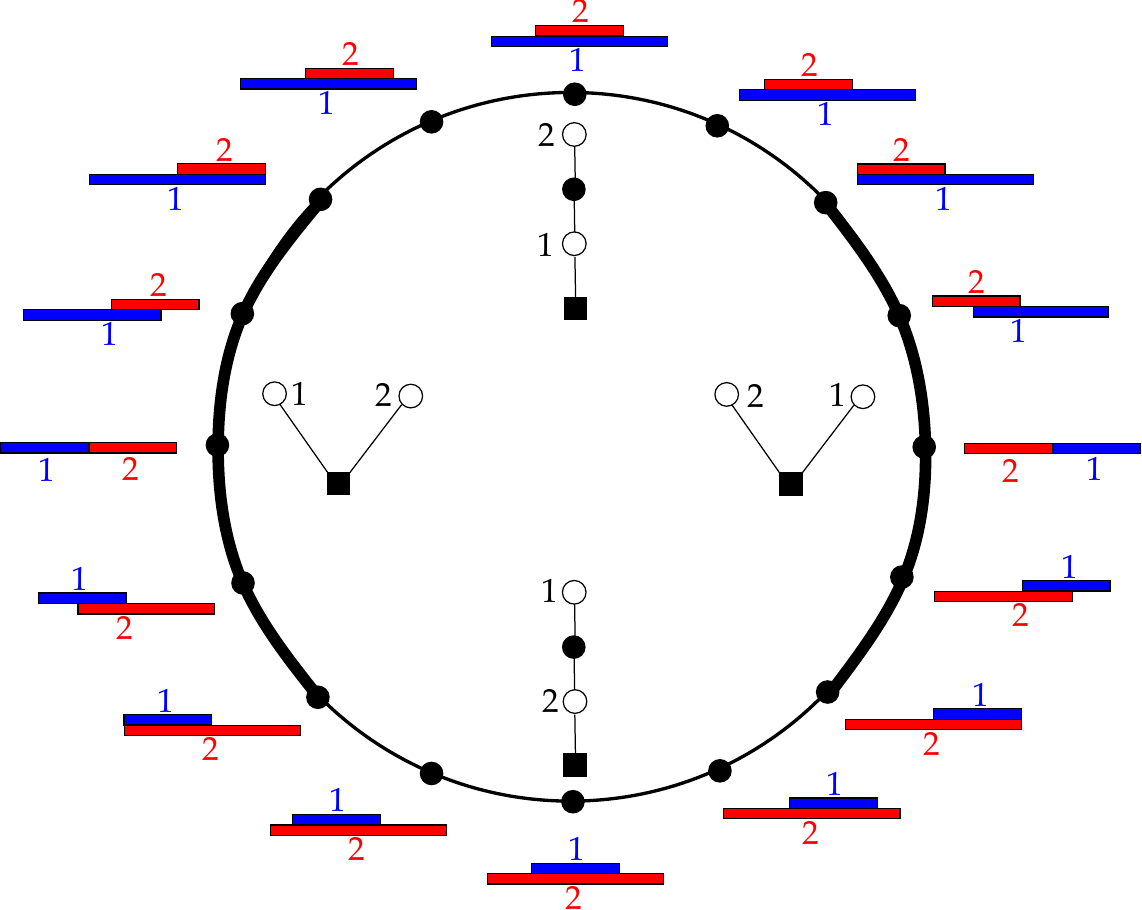} \qquad
	\raisebox{2pc}{\includegraphics[scale=0.5]{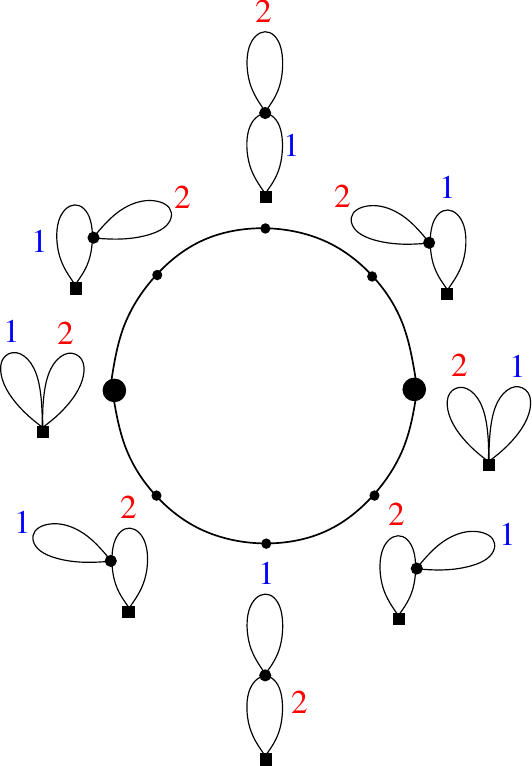}}
	\caption{On the left is the space $\Ov^1(2)$, the corresponding four trees $T$ in $\mathcal{T}_2$, and a sampling of sixteen normalized overlapping intervals elements $[(L_1, L_2),\sigma]$ in $\Ov^1(2)$.  
	The subspaces $\Ov^1(2,\sigma,T)$ are the six closed arcs between the points at angles $0$, $\pi/4$, $3\pi/4$, $\pi$, $5\pi/4$, and $7\pi/4$.
	On the right is the space $\Cact^1(2)$.  The map $\Ov^1(2) \to \Cact^1(2)$ collapses each thick arc in the left-hand circle to one of the 2 large points in the right-hand circle.
	}
	\label{F:arity2}
\end{figure}

\subsection{Maps from the space of normalized overlapping intervals}
\label{S:projection-onto-Cact^1}

We will first define a map $p_m: \Ov^1(m) \to \Cact^1(m)$.  We will then show that it is a homotopy equivalence by showing that it is a cellular map with contractible preimages.  We will finally show that the inclusion $\Ov^1(m) \to \Ov(m)$ is also a homotopy equivalence.

Recall from Definition \ref{D:cell-functor} and Proposition \ref{P:cell-structure} that $\Cact^1(m)$ is a union of cells $\mathcal{E}_m(T):=\Delta^{|w_1|} \x \dots \x \Delta^{|w_m|}$ where $T$ is a b/w tree with white vertices $w_1, \dots, w_m$.

\begin{definition}
\label{D:proj-to-cacti}
For each $m\geq 1$ and each $T \in \T_m$, we define a map $p_m^T: \Ov^1(m,T) \to \Cact^1(m)$.   
Let $[\bL, \sigma, \bK] \in \Ov^1(m,T)$, let $i\in\{1,\dots,m\}$, and suppose that $v_1, \dots, v_n$ are the black vertices directly above $w_i$ in $T$.
Let $t_0, \dots, t_n$ be the gaps between the intervals $[x_{v_j}, y_{v_j}]$; that is, $t_0:=x_{v_1}-b_i$, $t_j:=x_{v_{j+1}}-y_{v_j}$ for each $j\in \{1,\dots, n-1\}$, and $t_n:= c_i - y_{v_n}$.
Then define $p_m^T[\bL, \sigma, \bK]$ as the point in the cell $\mathcal{E}_m(T)$ whose coordinates in the factor $\Delta^{|w_i|}$ are $(mt_0/|\I|, \dots, mt_n/|\I|)$.
\end{definition}

Notice that $p_m^T[\bL, \sigma, \bK]$ is determined completely by $T$ and $\bK$.  Conversely, given $T\in \T_m$ and  $C\in \mathcal{E}_m(T)$, there is a unique list $\bK$ of intervals $[x_v,y_v]$ corresponding to the black vertices of $T$ such that $p_m^T[\bL, \sigma, \bK]=C$; this is because $T$ determines the bounds $b_i$ and $c_i$ in Definition \ref{D:normalized-overlapping}.

One can prove that the maps $p_m^T$ for all $T \in \T_m$ glue together to a well defined function $p_m: \Ov^1(m) \to \Cact^1(m)$ by checking that on $\Ov^1(m,T) \cap \Ov^1(m,T')$, we have $p_m^T=p_m^{T'}$.
We instead reinterpret the definition of $p_m^T$, following Remark \ref{R:alt-def-Ov1}.
The cactus $p_m^T[\bL, \sigma, \bK] \in \Cact^1(m)$ can be described by using the clockwise parametrization of its perimeter and specifying a lobe at each time $t\in [0,m]$. 
First apply the orientation-preserving affine-linear homeomorphism $\I \to [0,m]$.
Then $p_m^T[\bL, \sigma, \bK]$ is the cactus where the lobe $w_i$ at time $t$ corresponds to the interval $L_i$ with the highest $\sigma$-height at $t$.  

It is perhaps even clearer to use the space of partitions of $S^1$, as in the work of Salvatore \cite{Salvatore:2009}.
In that space, $p_m^T[\bL, \sigma, \bK]$ is the partition of 
$S^1 
\cong \I / \d \I$ where a time $t$ lies in the $1$-manifold labeled by the index of the interval with the highest $\sigma$-height at $t$ (see Definition \ref{D:T_sigma}).
From this description, we see that $p_m^T$ is independent of the b/w tree $T$ used in choosing a representative of a domain element.  
In other words, $p_m^T[\bL, \sigma, \bK]$ is completely determined by $\bL$ and $\sigma$.
Thus we obtain a continuous surjective function 
\[
p_m: \Ov^1(m) \to \Cact^1(m).
\]  

\begin{theorem}
\label{T:proj-is-cellular}
The space $\Ov^1(m)$ admits the structure of a finite regular CW complex such that $p_m$ is a cellular map.
\end{theorem}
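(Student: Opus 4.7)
The plan is to exhibit a finite regular CW structure on $\Ov^1(m)$ whose open cells are indexed by pairs $(\sigma, T)$ with $\sigma \in \SS_m$ and $T \in \T_\sigma$, arranged so that each open cell maps under $p_m$ into the interior of $\mathcal{E}_m(T) \subset \Cact^1(m)$. For each such pair, I would define the open cell $e(\sigma, T)$ to be the subset of $\Ov^1(m, \sigma, T)$ where every gap between consecutive black-vertex intervals lying directly above the same white vertex is strictly positive, and where each endpoint $x_i$ (respectively $y_i$) lies strictly between $a_i$ and $b_i$ (resp.~$c_i$ and $d_i$) whenever those bounds differ. The canonical representative of a point of $\Ov^1(m)$ would be obtained by choosing $T$ minimal among b/w trees encoding the point's overlap pattern (analogously to Definition \ref{D:underlying-tree}) and $\sigma$ the unique linear extension of heights consistent with $T$ and the chosen representative of the $\Ov$-equivalence class.

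The first main step is to verify that the closure $\overline{e(\sigma, T)}$ is homeomorphic to a closed disk. Inspecting Definition \ref{D:normalized-overlapping}, the gap data above each white vertex $w_i$ lies in a closed simplex $\Delta^{|w_i|}$, and there are up to two additional closed-interval factors from the allowed ranges of $x_i$ and $y_i$, possibly degenerating to a point when $\lambda^+(i)$ or $\rho^+(i)$ is empty; the closed stratum is the product of all these factors, hence a closed ball whose interior is $e(\sigma, T)$. The second main step is to check that the open cells are pairwise disjoint and that each $\overline{e(\sigma, T)}$ is a union of open cells. Two types of boundary degeneration occur: a gap tending to zero, which induces an angle collapse $T' \angle T$ and lands the point in $e(\sigma, T')$; and an endpoint reaching the edge of its range, which triggers the equivalence relation of Definition \ref{D:normalized-overlapping} and represents the point by a different pair $(\sigma', T')$ in which two non-overlapping intervals have had their $\sigma$-heights swapped. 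Finiteness of the resulting CW structure follows because there are finitely many such pairs $(\sigma, T)$ for each $m$.

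Cellularity of $p_m$ is then immediate: from the description of $p_m$ via partitions of $S^1$ following Definition \ref{D:proj-to-cacti}, $e(\sigma, T)$ projects into the interior of $\mathcal{E}_m(T)$, and since $\dim e(\sigma, T) = \sum_i |w_i| + \epsilon(\sigma, T)$ for some nonnegative $\epsilon(\sigma, T)$ counting the nontrivial endpoint ranges, while $\dim \mathcal{E}_m(T) = \sum_i |w_i|$, the $k$-skeleton of $\Ov^1(m)$ maps into the $k$-skeleton of $\Cact^1(m)$. The main obstacle I foresee is the combinatorial bookkeeping of the second step: one must show that closures of different open cells meet only along lower-dimensional open cells, ensuring that the equivalence identifications of Definition \ref{D:normalized-overlapping} match precisely the attaching data, so that what has been described is a genuine regular CW decomposition rather than a looser stratification. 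A delicate sub-issue is tracking how a single limit can be reached simultaneously by a gap collapse and a height swap, for which one must check that the two descriptions of the limiting cell agree.
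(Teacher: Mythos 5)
Your proposed decomposition — one open cell $e(\sigma,T)$ for each pair $(\sigma,T)$ with $T\in\T_\sigma$, cut out by making the gap and endpoint inequalities of Definition \ref{D:normalized-overlapping} strict — is not a CW decomposition of $\Ov^1(m)$, and the gap is structural rather than bookkeeping. First, the open cells are not pairwise disjoint: because of the equivalence relation in $\Ov(m)$, two strata $\Ov^1(m,\sigma,T)$ and $\Ov^1(m,\sigma',T)$ with $\sigma\neq\sigma'$ can coincide, or overlap in top dimension, on the locus where the intervals whose $\sigma$-heights differ have disjoint interiors; this already happens in the fiber over the corolla cactus in arity $3$ (in the paper's Figure \ref{F:cells-m=3}, two of the pieces $F_{\sigma,T}(C)$ are literally equal, and two others intersect in a $2$-dimensional triangle). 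Your "canonical representative" device does not repair this, since for non-overlapping intervals there is no unique linear extension of heights, and assigning points by convention produces half-open regions rather than open balls. Second, and more fatally, your cell inventory is too small: the frontier points of the strata are not the interior of any $\Ov^1(m,\sigma,T)$, so they lie in no proposed cell. Already for $m=2$ one can compute that each $\Ov^1(2,\sigma,T)$ contributes a single open arc, giving six open $1$-cells and no $0$-cells at all, whereas $\Ov^1(2)$ is a circle whose correct structure (used later in the proof of Theorem \ref{T:browder-compatible-space-level}) has six vertices and six edges. The underlying reason is that the fibers $p_m^{-1}(C)$ are positive-dimensional complexes that must themselves be subdivided into many cells; they cannot be swept into single cells indexed by $(\sigma,T)$.

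The paper's proof supplies exactly this missing subdivision. It realizes each fiber piece $F_{\sigma,T}(C)=p_m^{-1}\{C\}\cap\Ov^1(m,\sigma,T)$ as a subcomplex of the cube $Q=\I^{2m}$, simplicially subdivided by the hyperplanes $t_i=k|\I|/m$ and $t_j=t_i+k|\I|/m$; it glues these into a simplicial structure on $F_T(C)$ using the inequalities $y_i\leq x_j$ that govern the $\Ov(m)$-identifications; it then gives $p_m^{-1}(\mathcal{E}(T))\cong\mathcal{E}(T)\times F_T(C)$ the product cell structure and checks compatibility across angle collapses $T'\angle T$, obtaining a finite regular CW complex on which $p_m$ is a coordinate projection over each $\mathcal{E}(T)$, hence cellular. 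Your dimension comparison for cellularity would be fine once a correct structure is in place, but without a finer subdivision of the fibers you do not yet have a cell structure (let alone a regular one) to which it applies.
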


\begin{proof}

Recall that the structure of a regular CW complex on $\Cact^1(m)$ was described in Definition \ref{D:cell-functor} and Proposition \ref{P:cell-structure}.
For any $C\in \Cact^1(m)$, we will essentially obtain CW structures on the preimages $p_m^{-1}\{C\}$ which fit together to give one on $\Ov^1(m)$.  We do this by restricting $p_m$ first to subspaces $\Ov^1(m,\sigma,T)$ and then to subspaces $\Ov^1(m,T)$.  Figure \ref{F:cells-m=3} gives some indication of the sub-preimages that we will define.

Let $Q:=\I^{2m}$.  First give it the structure of a cubical complex by subdividing each factor of $\I$ into $m$ subintervals of equal length.  Then give $Q$ the structure of a simplicial complex by subdividing each resulting cube (which has side length $|\I|/m$) into $(2m)!$ simplices of dimension $2m$.  If $t_1, \dots, t_{2m}$ are coordinates on $Q$, this subdivision is obtained via all the hyperplanes  $t_j = t_i + k |\I|/m$ where $k$ is an integer, as well as all intersections of them.  

For any $C$ in $\Cact^1(m)$, 
define $F_{\sigma, T}(C):=p_m^{-1}\{C\} \cap \Ov^1(m,\sigma,T)$, and let $[\bL, \sigma, \bK] \in F_{\sigma,T}(C)$. 
Here $\sigma$ is fixed, and as noted just after Definition \ref{D:proj-to-cacti}, 
$\bK=([x_v,y_v])_{v\in B(T)}$
 is completely determined by $C$.
So $F_{\sigma,T}(C)$ can be identified with the set of all possible values of $\bL=([x_i,y_i])_{i=1}^m$.
In $Q$, we identify for each $i \in \{1,\dots,m\}$ the coordinate $t_{2i-1}$ with $x_i-x_v$ and the coordinate $t_{2i}$ with $y_i-x_v$, where $v$ is the black vertex directly below the white vertex $i$.  
From Definition \ref{D:normalized-overlapping}, we see that $F_{\sigma,T}(C)$ is a union of closed cubical cells in $Q$.

Next, with $T$ still fixed, let $F_T(C):=\bigcup_{\sigma} F_{\sigma, T}(C)$ where the union is over those $\sigma\in \SS_m$ compatible with $T$.  
Consider the intersection $\bigcap_{\sigma \in S} F_{\sigma, T}(C)$ where $S$ is any subset of $\SS_m$.  
In any single $F_{\sigma,T}(C)$, this intersection is 
a simplicial subcomplex determined by 
weak inequalities of the 
form
$y_i \leq x_j$.  
Thus $F_T(C)$ has the structure of a simplicial complex.

\begin{figure}[h!]
\includegraphics[scale=0.8]{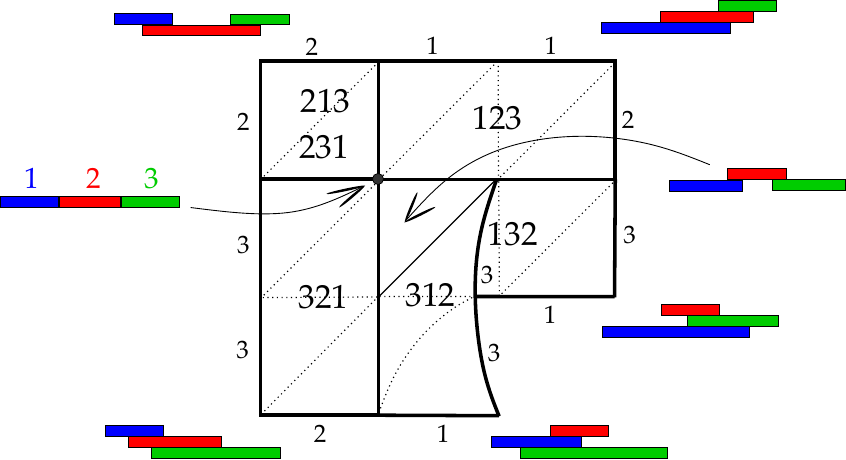}
\caption{Above is the space $p_3^{-1}(C)$ for the corolla cactus $C$ in $\Cact^1(3)$ such that the lobes are labeled 1, 2, and 3 from left to right.  It is also $\Ov^1(3,T)$ for the b/w tree $T$ with just a root and white vertices labeled $1$, $2$, and $3$ from left to right.
The thick solid lines delineate the pieces $F_{\sigma, T}(C)$ for the six possible $\sigma$.  
Each $F_{\sigma, T}(C)$ is labeled by $\sigma(1) \sigma(2) \sigma(3)$.  The dotted lines show the simplices from subdividing the cube $Q$.  A label $i$ on an edge indicates that interval $i$ varies along this edge, with the upward and rightward directions corresponding to the rightward motion of an endpoint.
The upper-left square is equal to $F_{\sigma, T}(C)$ for two different $\sigma$.  
The triangle with a thin solid edge is the intersection of spaces $F_{\sigma, T}(C)$ for two different $\sigma$.  
Any other $F_{\sigma, T}(C) \cap F_{\sigma', T}(C)$ is a vertex or an edge.  
Each picture of intervals is an interior point of $F_{\sigma, T}(C)$, except for the one for the vertex common to all of them.  
For any other tree $T' \in \T_3$, we have $T\prec T'$, and $F_{\sigma, T'}(C)$ is a subcomplex of $F_{\sigma, T}(C)$.
}
\label{F:cells-m=3}
\end{figure}

We now allow the cactus $C$ to vary, still fixing $T$.  
Each $T$ determines a closed cell in $\Cact^1(m)$ of the form 
$\mathcal{E}(T)=\Delta^{|w_1|} \x \dots \x \Delta^{|w_m|}$.  Suppose $C,C' \in \mathcal{E}(T)$.
Let $\bK=([x_v, y_v])_{v\in B(T)}$ and $\bK'=([x_v', y_v'])_{v\in B(T)}$ be the lists of intervals determined by $C$ and $C'$ respectively. 
There is a canonical homeomorphism $F_T(C) \to F_T(C')$ induced by the $|B(T)|$ translations of $\R$ which send $x_v$ to $x'_v$ and thus $y_v$ to $y'_v$.    
Thus $p_m^{-1}(\mathcal{E}(T)) = \Ov^1(m,T)$ is homeomorphic to $\mathcal{E}(T) \x F_T(C)$ for any $C\in \mathcal{E}(T)$.  We give $p_m^{-1}(\mathcal{E}(T))$ the induced product cell structure.  For any morphism $T'\prec T$, the space $(p_m^T)^{-1}(\mathcal{E}(T'))=\Ov^1(m,T) \cap \Ov^1(m,T')$ is a subcomplex of $(p_m^T)^{-1}(\mathcal{E}(T)) = \Ov^1(m,T)$ because $\mathcal{E}(T')$ is a face of $\mathcal{E}(T)$.  


Finally, we just need to show that the cellular structures on the subspaces $p_m^{-1}(\mathcal{E}(T))$ yield one on all of $\Ov^1(m)$.  It suffices to consider an angle collapse $T' \angle T$ where black vertices $v$ and $w$ in $T$ are identified.  We need only consider the case where one of these vertices lies directly to the left of the other and the case where one lies directly above the other.  In each case, one can check that $F_{\sigma, T}(C)$ is a cubical subcomplex of $F_{\sigma, T'}(C)$ where certain $[x_i,y_i]$ lie in $[x_v,y_v]$ and/or certain $[x_i,y_i]$ lie in $[x_w,y_w]$.  Thus $F_T(C)$ is a subcomplex of $F_{T'}(C)$, and hence $(p_m^T)^{-1}(\mathcal{E}(T'))=\Ov^1(m,T) \cap \Ov^1(m,T')$ is a subcomplex of $(p_m^{T'})^{-1}(\mathcal{E}(T'))=\Ov^1(m,T')$.


Putting this all together, we get the structure of a CW complex on $\Ov^1(m)=\bigcup_{T \in \T_m} \Ov^1(m,T)$ as follows.  Start with those $\Ov^1(m,T)$ for which $\mathcal{E}(T)$ is a $0$-cell.  Then attach those $\Ov^1(m,T')$ for which $\mathcal{E}(T')$ is a $1$-cell, one by one in any order.  
Continue in this way until all $T\in \T_m$ have been used.  Indeed, we have shown in the paragraphs above that $\Ov^1(m,T) \cap \Ov^1(m,T')$ is a subcomplex of both $\Ov^1(m,T)$ and $\Ov^1(m,T')$, so at each step we attach one cell complex to another along a subcomplex of each of them.  
For example, Figure \ref{F:cells-m=3} shows $\Ov^1(3,T)$ for the corolla $T$, i.e., the tree $T$ such that $\mathcal{E}(T)$ is 0-dimensional.  
This CW structure is regular, as a union of products of finitely many simplices with a finite simplicial complex.

The projection $p_m:\Ov^1(m) \to \Cact^1(m)$ is cellular because over each cell $\mathcal{E}(T)$, it is a projection onto one of two factors such that the cellular structure on $\Ov^1(m)$ is the product of the cellular structures on those factors.
\end{proof}

\begin{lemma}
\label{L:fibers-contractible}
The preimage of any point under $p_m$ is contractible.
\end{lemma}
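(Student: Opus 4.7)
My plan is to exhibit an explicit strong deformation retraction of $p_m^{-1}\{C\}$ onto a canonical minimal point $p_0$ via linear interpolation of interval endpoints.  First, I would fix $T = T(C)$, the underlying b/w tree of $C$, so that $C$ lies in the interior of $\mathcal{E}(T)$.  By the analysis of $F_T(C) = p_m^{-1}\{C\} \cap \Ov^1(m, T)$ in the proof of Theorem \ref{T:proj-is-cellular}, elements of this part of the fiber are represented by pairs $(\bL, \sigma)$ where $\sigma$ is compatible with $T$, the list $\bK$ is fixed by $C$, and $\bL = ([x_i, y_i])_{i=1}^m$ satisfies the convex constraints $a_i \le x_i \le b_i$ and $c_i \le y_i \le d_i$; crucially, $b_i$ and $c_i$ depend only on $T$ and $\bK$.

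Next I would define $p_0$ as the class of $([b_i, c_i])_{i=1}^m$ with any compatible $\sigma$.  Interpreting $C$ as a partition of $S^1$ as in Section \ref{S:partitions-and-coend}, one sees that $[b_i, c_i]$ is the minimal interval spanning the arcs labeled $i$; so these intervals have pairwise disjoint interiors for sibling lobes of $T$ and are nested for ancestor/descendant pairs.  On the latter, compatibility of $\sigma$ with $T$ forces an ancestor to have strictly lower $\sigma$-height than any of its descendants, so all compatible $\sigma$ represent the same element of $\Ov^1(m)$, making $p_0$ a single well-defined point lying in every $F_{\sigma, T}(C)$.

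The homotopy I would then use is
\[
H_s([\bL, \sigma]) := [\bL(s), \sigma], \qquad L_i(s) := \bigl[(1-s)x_i + s b_i,\ (1-s)y_i + s c_i\bigr].
\]
Convexity keeps $\bL(s)$ admissible, and since neither $T$ nor $\bK$ changes, and since each $L_i(s)$ still contains $[b_i, c_i]$ and hence every point of $S^1$ labeled $i$ in $C$, the projection satisfies $p_m H_s[\bL, \sigma] = C$ throughout.  The main obstacle---and the only real subtlety---will be verifying that $H_s$ descends to a continuous self-map of the quotient $\Ov^1(m)$, i.e., that it respects the equivalence relation inherited from $\Ov(m)$.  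The key observation is that $L_i(s) \subseteq L_i$ for all $s$, so the set of ordered pairs $(i, j)$ for which $L_i(s)$ and $L_j(s)$ have overlapping interiors is monotone non-increasing in $s$; hence if $(\bL, \sigma)$ and $(\bL, \sigma')$ are $\Ov$-equivalent at $s = 0$, they remain so for all $s \in [0, 1]$, yielding the desired retraction.
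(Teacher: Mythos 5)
Your proof is correct, but it takes a genuinely different route from the paper's. The paper never writes down a contraction: it first splits the fiber as a product over the black vertices of $T$ to reduce to a corolla, and then runs a double induction --- on the number of lobes $m$ and on the number of pieces in the cover $F(m)=F_1\cup\dots\cup F_m$ by the subcomplexes $F_I$ of configurations whose $I$-labelled intervals are at lowest height --- identifying each $F_I$ with a product of smaller fibers and simplices and repeatedly using the fact that a union of two contractible complexes along a contractible subcomplex is contractible. Your argument replaces all of this with an explicit strong deformation retraction: for fixed $(T,\bK,\sigma)$ the constraints of Definition \ref{D:normalized-overlapping} on $\bL$ are exactly the box $\prod_i\,[a_i,b_i]\times[c_i,d_i]$, which is convex and contains $(b_i,c_i)_i$, so the straight-line homotopy stays in the fiber ($p_m^T$ depends only on $T$ and $\bK$, which are fixed); your monotonicity observation ($L_i(s)\subseteq L_i$, so interior overlaps can only disappear) is precisely what is needed for the formula to descend to the $\Ov$-equivalence classes; and $p_0$ is well defined because core intervals of incomparable lobes have disjoint interiors while every $\sigma$ compatible with $T$ puts an ancestor before its descendants (Definition \ref{D:T_sigma}). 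Two points you should state explicitly: (i) the retraction must be defined on all of $p_m^{-1}\{C\}$, not just on ``this part of the fiber''; this is harmless, since $p_m^{-1}\{C\}=F_T(C)$ for $T$ the underlying tree of $C$ --- the containments $F_{\sigma,T''}(C)\subseteq F_{\sigma,T}(C)$ for $T\prec T''$ are established in the proof of Theorem \ref{T:proj-is-cellular}, and the paper's own proof opens with exactly this identification; and (ii) continuity on the quotient, which follows by pasting over the finitely many closed pieces $F_{\sigma,T}(C)$, on which your formula visibly agrees. What your approach buys is a short, canonical, $\SS_m$-independent contraction that avoids both the induction and the CW gluing lemma; what the paper's approach buys is that it works entirely inside the simplicial/cubical structure already built for Theorem \ref{T:proj-is-cellular}, which is the structure reused when $p_m$ is upgraded to a simple homotopy equivalence in Theorem \ref{T:proj-is-htpy-eqv} --- but since M.~Cohen's criterion only needs contractible point preimages, your proof plugs into that argument without change.
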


\begin{proof}
Let $C\in \Cact^1(m)$, and let $T$ be the b/w tree associated to $C$.
The space $p_m^{-1}(C)$ ($= F_T(C)$) can be decomposed as a product of spaces indexed by the black vertices in $T$, where the space indexed by $v$ is the space of possible intervals $[x_i,y_i]$ for all white vertices $i$ directly above $v$.  Thus we may assume that $T$ is a corolla $\gamma$ whose only vertices are its root and $m$ white vertices.  We may also assume that the white vertices in $T$ are labeled $1,\dots, m$ from left to right, since a permutation $\sigma\in \SS_m$ of these labels induces a homeomorphism $p_m^{-1}(\sigma \cdot C) \cong p_m^{-1}(C)$.  

Let $F(m):=p_m^{-1}(\star_m)$, where $\star_m$ is the element of $\Cact^1(m)$ with underlying tree a corolla with white leaves labeled by $1,\dots, m$ from left to right.  
By Theorem \ref{T:proj-is-cellular}, $F(m)$ has the structure of a cell complex.
We will prove that it is contractible by induction on $m$.  One can take either $m=0$ or $m=1$ as the basis case, since each of $F(0)$ and $F(1)$ is a point.

Suppose that $F(n)$ is contractible for all $n<m$.  Let $(i_1, \dots, i_k)$ be a sublist of $(1,\dots, m)$.  Define $F_{i_1,\dots,i_k}$ as the subspace of $F(m)$ where all the intervals $L_{i_1}, \dots, L_{i_k}$ are at lowest height.  
For brevity, we will also write $I:=(i_1,\dots,i_k)$ and $F_I:=F_{i_1,\dots,i_k}$.  
Any $F_I$ is a subcomplex of $F(m)$.
Indeed, $F_m = \bigcup_{\sigma \in \SS_m} F_{\sigma, \gamma}(C)$ 
and $F_I$ is the union of subspaces of $F_{\sigma, \gamma}(C)$ determined by inequalities of the form $y_i \leq x_j$ where $i$ and $j$ are indices which either are in $I$ or have lower $\sigma$-height than some index in $I$.
Notice also that $F(m) = F_1 \cup \dots \cup F_m$.  We will now show by induction on $q$ that a $q$-fold union of $F_I$ is contractible for any $q\geq 1$, thus completing the proof.

First, consider a single $F_I$.
Let $n_0:=i_1-1$, let $n_j:=i_{j+1} - i_j - 1$ for $j=1,\dots,k-1$, and let $n_k:=m-i_k$.  
Thus $n_0$ is the number of $L_i$ to the left of $L_{i_1}$; for $j \in \{1,\dots,n-1\}$, $n_j$ is the number of $L_i$ strictly between $L_{i_j}$ and $L_{i_{j+1}}$; and $n_k$ is the number of $L_i$ to the right of $L_{i_k}$.  
Then $F_I$ is homeomorphic to 
\[
F(n_0) \x \Delta^{\chi_+(n_0)} \x 
F(n_1) \x \Delta^{2 \chi_+(n_1)} \x 
\dots \x 
F(n_{k-1}) \x \Delta^{2 \chi_+(n_{k-1})} \x 
F(n_k) \x \Delta^{\chi_+(n_k)}
\]
where $\chi_+(x)$ is $1$ if $x>0$ and $0$ otherwise.  Since $n_0, \dots, n_k<m$, the above space is a product of contractible spaces by induction on $m$, and hence $F_I$ is contractible.

Now suppose that any nonempty union of fewer than $q$ spaces $F_I$ is contractible.  
Consider $F_{I_1} \cup \dots \cup F_{I_q}$.  
We know that $F_{I_q}$ is contractible, and by induction on $q$, so is $F_{I_1} \cup \dots \cup F_{I_{q-1}}$.  
For any two sublists $I$ and $J$ of $(1,\dots,m)$, we have $F_I \cap F_J = F_{I \cup J}$.  
Thus $F_{I_q} \cap (F_{I_1} \cup \dots \cup F_{I_{q-1}}) = F_{I_1 \cup I_q} \cup \dots \cup F_{I_{q-1} \cup I_q}$, which is contractible by induction on $q$.  Hence $F_{I_1} \cup \dots \cup F_{I_q}$ is contractible, as the union of two contractible complexes whose intersection is a contractible subcomplex.
Therefore $F(m)$ is contractible, and we conclude that any preimage $p_m^{-1}(C)$ is contractible.
\end{proof}

\begin{theorem}
\label{T:proj-is-htpy-eqv}
The surjection $p_m: \Ov^1(m) \to \Cact^1(m)$ is a simple, $\SS_m$-equivariant homotopy equivalence.
\end{theorem}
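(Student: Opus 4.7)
The plan is to combine Theorem \ref{T:proj-is-cellular} and Lemma \ref{L:fibers-contractible} with a classical collapsibility criterion for simple homotopy equivalence, and to verify $\SS_m$-equivariance directly from the definitions. For the equivariance, I would unwind Definitions \ref{D:ov-int} and \ref{D:T_sigma} to see that the right $\SS_m$-action on $[\bL, \sigma]$ preserves each physical interval together with its $\sigma$-height in the stack; only the labels are permuted by $\tau^{-1}$. Since $p_m$ picks out, at each point of $S^1$, the label of the interval of highest $\sigma$-height, acting by $\tau$ on the representative produces the same underlying cactus with lobes relabeled by $\tau^{-1}$, which matches the $\SS_m$-action on $\Cact^1(m)$ from Section \ref{S:symm-seq-cacti}.

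For the simple homotopy equivalence, I would apply the theorem (as in Cohen's \emph{A Course in Simple-Homotopy Theory}) that a cellular map $f: X \to Y$ between finite regular CW complexes is a simple homotopy equivalence provided $f^{-1}(\overline{e})$ is collapsible for every closed cell $\overline{e}$ of $Y$. Theorem \ref{T:proj-is-cellular} supplies the cellular structure, so the task is to upgrade the contractibility in Lemma \ref{L:fibers-contractible} to collapsibility of the preimages of all closed cells. I would do this by refining the induction in the proof of Lemma \ref{L:fibers-contractible}: each $F_I$ is a product of lower-dimensional $F(n_j)$'s with simplices, and a product of collapsible complexes is collapsible, so by induction on $m$ each $F_I$ is collapsible. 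Arbitrary intersections $F_{I_1} \cap \cdots \cap F_{I_q} = F_{I_1 \cup \cdots \cup I_q}$ have the same form, and repeatedly applying the standard fact that $A \cup B$ is collapsible when $A$, $B$, and $A \cap B$ are all collapsible subcomplexes of a CW complex yields the collapsibility of $F(m)$. The preimage of a closed cell $\overline{\mathcal{E}(T)}$ has an analogous stratified structure through the product decomposition $p_m^{-1}(\mathcal{E}(T)) \cong \mathcal{E}(T) \x F_T(C)$ from Theorem \ref{T:proj-is-cellular}, and the same inductive collapsing goes through.

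I expect the main obstacle to be exactly this upgrade from contractibility to collapsibility, since the two notions diverge for finite CW complexes in general, with the dunce hat as the classical counterexample. Here the explicit recursive presentation of the fibers rescues us. For the equivariance of the simple homotopy type, I would either arrange the collapses above symmetrically using the free $\SS_m$-action on the whole complex, or descend to the free quotient $\Ov^1(m)/\SS_m \to \Cact^1(m)/\SS_m$ and apply the same criterion to the map between $K(B_m, 1)$-spaces (cf.~Remark \ref{R:eqv-of-cacti-as-K(B,1)-spaces}). An alternative, more hands-on route I would keep in reserve is to write down an explicit section $s: \Cact^1(m) \to \Ov^1(m)$ sending $C$ to the ``tight'' element $[([b_i,c_i])_{i=1}^m, \sigma, \bK]$ and to produce an explicit deformation retract of $\Ov^1(m)$ onto $s(\Cact^1(m))$ by linearly sliding $x_i$ to $b_i$ and $y_i$ to $c_i$; organizing this fiberwise sliding into a sequence of elementary collapses on the simplicial subdivision from Theorem \ref{T:proj-is-cellular} would give the same conclusion.
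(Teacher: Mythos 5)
There is a genuine gap in your main route. You replace the criterion the paper actually needs (contractible \emph{point} preimages) by a stronger one (collapsible preimages), and then bridge the difference with the claim that ``$A\cup B$ is collapsible when $A$, $B$, and $A\cap B$ are all collapsible subcomplexes.'' That is not a standard fact and is not true as stated: the standard gluing statement is that if $B$ \emph{collapses onto} $A\cap B$, then $A\cup B$ collapses onto $A$, and collapsibility of $A$, $B$, and $A\cap B$ separately does not give this relative collapse (contractibility and collapsibility genuinely diverge here, as your own dunce-hat remark indicates). So your induction establishes contractibility of the fibers --- which is exactly Lemma \ref{L:fibers-contractible} --- but not collapsibility, and the upgrade you identify as the main obstacle is left unproved. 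The paper avoids this entirely: after Theorem \ref{T:proj-is-cellular} it barycentrically subdivides to make $p_m$ a simplicial map between finite simplicial complexes and invokes M.~Cohen's theorem (the 1967 paper cited as \cite[Theorem 11.1]{MCohen:1967}) that a simplicial map of finite complexes with contractible point-inverses is a simple homotopy equivalence; Lemma \ref{L:fibers-contractible} then finishes the argument with no collapsibility needed. Your verification of $\SS_m$-equivariance is fine and is all the theorem asks for on that score (the statement does not require an equivariant simple homotopy equivalence, so the symmetric arrangement of collapses is unnecessary).

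Your fallback route also does not work as described: the ``tight'' assignment $C \mapsto [([b_i,c_i])_i,\sigma,\bK]$ is not continuous across cell boundaries, because the bounds $b_i,c_i$ are computed from the underlying tree and jump when the tree changes. For instance in arity $2$, along the edge of $\Cact^1(2)$ where lobe $2$ sits on lobe $1$ the tight element has $L_1=\I$, while at the limiting corolla it has $L_1$ equal to half of $\I$; so there is no section of this form, and the proposed fiberwise sliding retraction onto its image is not defined. If you want a hands-on argument, the honest options are either to verify collapsibility of cell preimages by exhibiting explicit collapses (not by the union principle above), or to do what the paper does and quote Cohen's contractible-point-inverse theorem.
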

\begin{proof}
By Theorem \ref{T:proj-is-cellular}, $p_m$ is a cellular map.  Via barycentric subdivision, we turn it into a simplicial map between finite simplicial complexes.  By Lemma \ref{L:fibers-contractible}, the preimage of every point is contractible.  Then by work of M.~Cohen \cite[Theorem 11.1]{MCohen:1967}, $p_m$ is a simple homotopy equivalence.  The $\SS_m$-equivariance is immediate from the definition of $p_m$.
\end{proof}

The previous Theorem connects to our cactus action, while the next one connects to the action of little 2-cubes on long knots.


\begin{theorem}
\label{T:Ov^1-eqv-to-Ov}
The inclusion $\Ov^1(m) \incl \Ov(m)$ is an $\SS_m$-equivariant homotopy equivalence.  
\end{theorem}

\begin{proof}
The fact that the inclusion is $\SS_m$-equivariant is clear, so we just need to check that it is a homotopy equivalence.  Let $PB_m$ denote the group of $m$-component pure braids.  By \cite[Proposition 3.3.19]{Kaufmann:2005}, $\mathcal{C}act^1(m)$ is a $K(PB_m,1)$ space.  We take as its basepoint the corolla cactus $\star_m\in \Cact^1$ with white vertices labelled $1,2,\dots,m$ from left to right.  Given $1\leq i<j\leq m$, let $\beta_{ij}$ be the loop where lobe travels over lobes $i+1, \dots, j$, then lobe $j$ travels over lobe $i$, and finally lobe $i$ travels over lobes $j-1, \dots, i+1$ to its original position.  Then $\{\beta_{ij}\}_{1\leq i<j\leq m}$ generates $\pi_1(\Cact^1(m), \star_m)$.

By Theorem ~\ref{T:proj-is-htpy-eqv}, $\Ov^1(m)$ is also a $K(PB_m, 1)$ space.  We take the basepoint $\ast_m \in \Ov^1(m)$ to be the increasing list of points $(x_1,y_1,\dots,x_m,y_m)$ in $\I$ where each consecutive pair of points spaced $|\I|/m$ apart.  Given $1\leq i<j\leq m$, let $\alpha_{ij}$ be the loop in $\Ov^1(m)$ defined analogously to $\beta_{ij}$, but with intervals instead of lobes.  By definition of $\Ov^1$, this requires the interval of lower height at each stage of the loop to grow to width $2|\I|/m$ and then shrink back to its original width.  Then $p_m(\alpha_{ij}) = \beta_{ij}$.
Since $p_m$ is 
a homotopy equivalence, $\{\alpha_{ij}\}_{1 \leq i < j \leq m}$ generates $\pi_1(\Ov^1(m), \ast_m)$.

Finally, the image of $\{\alpha_{ij}\}_{1 \leq i,j \leq m}$ under the inclusion $\Ov^1(m) \incl \Ov(m)$ generates $\pi_1(\Ov(m), \ast_m)$.  In fact, its image under a homotopy inverse to the projection $\CC_2(m) \xtwoheadrightarrow{\simeq} \Ov(m)$ generates the fundamental group of the $K(PB_m, 1)$ space $\CC_2(m)$.
Since the inclusion $\Ov^1(m) \incl \Ov(m)$ is a map of aspherical spaces which sends a generating set to a generating set, it is a homotopy equivalence.
\end{proof}

\begin{remark}
We can further show that $\Ov^1(m)$ is a subcomplex and a (strong) deformation retract of $\Ov(m)$, when $\Ov(m)$ is given a suitable cellular structure.  We omit the proofs here.
\end{remark}

Each of the maps below is a pointed 
$\SS_m$-equivariant homotopy equivalence.  The first map is briefly described in Section \ref{S:ov-int-knots}, and $\iota_m$ is described in Remark \ref{R:eqv-of-cacti-as-K(B,1)-spaces}.  Therefore so is their composition:
\[
(\CC_2(m), \bullet_m) \xtwoheadrightarrow{}
(\Ov(m) , \ast_m) \xtwoheadrightarrow{r_m} (\Ov^1(m), \ast_m)
\xtwoheadrightarrow{p_m} (\Cact^1(m), \star_m) 
\xhookrightarrow{\iota_m} (\mathbb{P}\Cact(m), \iota_m(\star_m))
\]
Thus the proof of Theorem \ref{T:B} is complete.  


We conclude this Section with some questions.  If $G$ is a topological group, then the overlapping intervals operad (and hence the little $2$-cubes operad) acts on $\Omega G$, roughly by  concatenating loops according to the interval coordinates and multiplying elements of $G$ in orders given by their heights.  Salvatore \cite{Salvatore:2009} constructed an action on $\Omega G$ of a certain operad of cacti with spines.  Roughly, that operad is the same as $\Cact$ except that each lobe has a basepoint that need not be the local root.  One can map $\Cact$ to this operad by taking each basepoint to be the local root.  Hence $\Cact$ acts on $\Omega G$.

\begin{conjecture}
\label{conj:action-on-Omega-G}
Let $G$ be a topological group.
Via the maps of  symmetric sequences $\Ov \to \Ov^1 \to \Cact^1 \to \Cact$, the actions of $\Ov$ and $\Cact$ on $\Omega G$ are compatible up to homotopy at the space level.
\end{conjecture}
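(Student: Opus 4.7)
The plan is to prove the conjecture arity by arity. For each $m \geq 1$, by Theorems \ref{T:proj-is-htpy-eqv} and \ref{T:Ov^1-eqv-to-Ov} it suffices to compare two maps $\Ov^1(m) \times (\Omega G)^m \to \Omega G$: the restriction of the $\Ov$-action, and the composite of $p_m: \Ov^1(m) \to \Cact^1(m) \hookrightarrow \Cact(m)$ with Salvatore's cacti action. Because spines are placed at local roots, Salvatore's action takes the clean coendomorphism form $(C;\omega_1,\dots,\omega_m)(t) = \omega_1(g_1^C(t)) \cdots \omega_m(g_m^C(t))$, where $g^C: S^1 \to (S^1)^{\times m}$ is the partition map of Section \ref{S:partitions-and-coend} and the product is taken pointwise in $G$.

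The main construction is to put the $\Ov$-action into analogous coendomorphism-style form and then build the homotopy. For $\xi = [(L_1,\dots,L_m),\sigma] \in \Ov^1(m)$, let $g^\xi: S^1 \to (S^1)^{\times m}$ have $i$-th component equal to $L_i^{-1}$ on $L_i(\I)$ and constant at the basepoint elsewhere; then the $\Ov$-action reads $\omega_{\sigma(m)}(g^\xi_{\sigma(m)}(t)) \cdots \omega_{\sigma(1)}(g^\xi_{\sigma(1)}(t))$, with the pointwise product taken in decreasing $\sigma$-height order. Each $g^\xi_i$ is piecewise-linear of degree $1$, so I can construct a homotopy $g^\xi_s$, $s \in [0,1]$, from $g^\xi$ to $g^{p_m(\xi)}$ componentwise by lifting each $g_i$ to a map $\bR \to \bR$ (canonical up to integer translation, hence continuous in $\xi$), taking the straight-line homotopy in the cover, and projecting back. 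Feeding $g^\xi_s$ into the $\sigma$-ordered pointwise product yields a homotopy from the $\Ov$-loop at $s=0$ to a loop at $s=1$ that differs from Salvatore's loop only in the order of pointwise factors. A final Eckmann--Hilton homotopy reorders the $\sigma$-permuted product to the canonical order, producing Salvatore's loop; the Eckmann--Hilton commutativity homotopy for pointwise products in $\Omega G$ is canonical, so this step is continuous in $\xi$ and the $\omega_i$.

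The hard part will be verifying continuity of these homotopies across the cellular strata of $\Ov^1(m)$, since the combinatorial data ($T$, $\sigma$, and the overlap pattern) change at strata boundaries. The cellular structure from Theorem \ref{T:proj-is-cellular} provides the right stratification: the homotopy can be defined cell by cell and compatibility checked on intersections. A subtler point is that the ``pause value'' of $g_i^{p_m(\xi)}$ on an overlap region $[x_j,y_j] \subset L_i(\I)$ (with $L_j$ directly above $L_i$) is $L_i^{-1}(x_j)$, which must degenerate continuously as overlaps collapse; the bounds in Definition \ref{D:normalized-overlapping} are set up precisely to track this behavior. Once the arity-$m$ homotopy is built, it extends to $\Ov(m)$ via the $\SS_m$-equivariant deformation retraction of Theorem \ref{T:Ov^1-eqv-to-Ov}, yielding the required space-level compatibility.
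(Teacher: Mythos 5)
First, a point of order: the statement you set out to prove appears in the paper only as Conjecture \ref{conj:action-on-Omega-G}; the paper offers no proof of it (it only remarks on its relation to Hepworth's comparison theorem), so there is no argument of the authors to compare yours against, and your proposal must stand on its own as an attempt at an open statement.

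On its own terms, your Step (a) is reasonable: recasting the $\Ov$-action \eqref{Eq:ov-action-on-Omega-G} in coendomorphism form and interpolating the degree-one maps $g^\xi \to g^{p_m(\xi)}$ through lifts to $\R$ is a sound reduction, and the stratum-continuity issue you flag there can in fact be handled (the supports of the interpolated maps stay inside the $L_i(\I)$, so at points of $\Ov^1(m)$ where $\sigma$ is ambiguous the relevant factors have disjoint support and commute on the nose). The genuine gap is Step (b). First, your formula for Salvatore's action, with the pointwise product taken in the fixed label order $\omega_1\cdots\omega_m$, is not $\SS_m$-equivariant for noncommutative $G$ (test a two-lobe cactus against the transposition); the actual cacti action multiplies, at each time, the nonidentity factors in an order dictated by the cactus, namely by position along the branch currently being traversed. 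Since the $\Ov$-action orders its factors by $\sigma$-height with the deepest interval leftmost, after Step (a) the two loops are pointwise products of the same group elements in orders whose discrepancy depends on the cell of $\Ov^1(m)$ (roughly a reversal along each branch), not a fixed permutation. Second, and decisively, the ``canonical Eckmann--Hilton commutativity homotopy'' you invoke does not exist: $\Omega G$ is an $E_2$- but not an $E_\infty$-space, so commuting two pointwise factors requires a choice (the two interchange homotopies differ by a braid generator and are not homotopic rel endpoints), and choosing these reordering homotopies continuously and coherently as $\sigma$ and the underlying tree jump across the cellular strata of $\Ov^1(m)$ is precisely the content of the conjecture --- it is the same coherence problem that forced Hepworth to pass through an intermediate operad in the analogous comparison with $\Omega^2 BG$. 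As written, Step (b) assumes the conclusion. To complete the argument you would need to construct the reorderings explicitly cell by cell (for instance, deforming $x\,\omega_2$ to $\omega_2\, x$ along the path from $e$ to $x=\omega_1(1/2)$ traced out by the outer loop, in the local two-lobe model) and then verify agreement of these choices on all intersections of cells; that verification, not the interpolation of $g^\xi$, is where the real work lies.
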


This conjecture is related to work of Hepworth \cite{Hepworth:TAMS} who established a compatibility between Salvatore's action on $\Omega G$ and the action of the framed little $2$-disks operad on $\Omega^2 BG$.  His compatibility involved an intermediate operad and operad maps.  Our conjecture involves only maps of spaces, but the intermediary $\Ov^1$ is 
perhaps more geometrically tractable than the intermediate operad in Hepworth's work.  

The following question is motivated by recent work on infinity operads of cacti \cite{BCLRRW}:

\begin{question}
\label{Q:infinity-operad}
Does the symmetric sequence $\{ \Ov^1(m) \}_{m\in \N}$ of normalized overlapping intervals admit the structure of an infinity operad?  If so, does the projection $p_m: \Ov^1(m) \to \Cact^1(m)$ give rise to a compatibility of such structures on these spaces?
\end{question}

If Conjecture \ref{conj:action-on-Omega-G} holds and the answer to Question \ref{Q:infinity-operad} is affirmative, one could also ask if the compatibility of actions of overlapping intervals and cacti extends to such an infinity operad structure.

\section{The cactus action on the tower and the cubes action on framed long knots}
\label{S:compatibility}

We will now finish the proof of Theorem \ref{T:A}.
That is, we will show compatibility at the level of the multiplication, Browder operation, and Dyer--Lashof operations in homology of the action of $\PCact$ on $T_n \K_d^{fr}$ with the action $\CC_2$ on $\K_d^{fr}$.  

Essentially, it suffices to compare the two actions using diagram \eqref{Eq:compatibility-diagram} below, which we now explain.  For brevity of notation, we write $\K^{fr}$ to mean $\K_d^{fr}$.  Recall that the map $\K^{fr} \to \tAM_n^{fr}$ is the composition of two maps.  The first is  $ev_n: \K^{fr}\to AM_n^{fr}$ (see Definition \ref{D:ev-map}).  The second is the map $\quot: AM_n^{fr} \to \tAM_n^{fr}$, which sends $\phi \in AM_n^{fr}$ to the composition $(\lim_{t\to 1} H_n(t,-)) \circ \overline{\phi} \circ \tau$ of the map $\tau: C_n\la \I \ra \to C_n \la \R\ra$ induced by multiplication by 2, the map $\overline{\phi}: C_n\la \R \ra \to C_n^{fr} \la \R^d\ra$ induced by $\phi : C_n\la \I \ra \to C_n^{fr}\la \I^d, \d \ra$, and the limit of a homotopy $H_n(-,t)$ that shrinks $n$-point configurations (see Section \ref{S:spatial-to-infinitesimal}).

The $\CC_2$-action on $\K^{fr}$ comes from the projection $\CC_2 \to \Ov$ and the $\Ov$-action on $\K^{fr}$ described in Section \ref{S:ov-int-knots}.  The map $r_m: \Ov(m) \to \Ov^1(m)$ is the homotopy inverse to the inclusion from Theorem \ref{T:Ov^1-eqv-to-Ov}.  The map $p_m:\Ov^1(m) \to \Cact^1(m)$ (see Definition \ref{D:proj-to-cacti} or Figure \ref{F:arity2}) is a homotopy equivalence by Theorem \ref{T:proj-is-htpy-eqv}.  The inclusion $\iota_m: \Cact^1(m) \incl \PCact(m)$ is a homotopy equivalence, as mentioned in Remark \ref{R:eqv-of-cacti-as-K(B,1)-spaces}.
\begin{equation}
\label{Eq:compatibility-diagram}
\xymatrix@R1.0pc{
\CC_2(m) \x (\K^{fr})^m \ar@{->>}[d]^-{\simeq} \ar[r]^-{A_m} & \K^{fr} \ar@{=}[d] \\
\Ov(m) \x (\K^{fr})^m  \ar@{->>}[d]_-{r_m \x \mathrm{id}}^-{\simeq} \ar[r]^-{A_m} & \K^{fr} \ar@{=}[d] \\
\Ov^1(m) \x (\K^{fr})^m  \ar[r]^-{A_2} \ar_-{p_m \x (\quot \circ ev_n)^m}[d]  
& \K^{fr} \ar^-{\quot \circ ev_n}[d] \\
\Cact^1(m) \x (\tAM_n^{fr})^m \ar[r]^-{\alpha_m} \ar@{^(->}_-{\iota_m \x \mathrm{id}}^-{\simeq}[d] & \tAM_n^{fr} \ar@{=}[d] \\
\PCact(m)  \x (\tAM_n^{fr})^m \ar[r]^-{\alpha_m} & \tAM_n^{fr} 
}
\end{equation}

The top and bottom squares commute because in each case, the action of the upper-left space of $m$-ary operations is pulled back from the action of the lower-left space of $m$-ary operations.  If we replaced $r_m$ above by its homotopy inverse, namely the inclusion $i_m:\Ov^1(m) \incl \Ov(m)$, then the resulting upward-flowing square would commute for the same reason.   Thus the square immediately below the top one commutes up to homotopy.  
Therefore establishing the homotopy commutativity of the third square will yield the desired compatibility.
We begin with the simpler case $m=2$ in Theorem \ref{T:browder-compatible-space-level}.  
The reader who is interested in a shorter exposition may skip ahead to Theorem \ref{T:dyer-lashof-compatible-space-level}, which is essentially the case of arbitrary $m$ and which we prove in a similar manner to Theorem \ref{T:browder-compatible-space-level} but with fewer details.

\subsection{The multiplication and Browder operation}

\begin{theorem}
\label{T:browder-compatible-space-level}
The square 
\begin{equation}
\label{Eq:browder-square}
\xymatrix@R1.5pc{
\Ov^1(2) \x \K^{fr} \x \K^{fr}  \ar[r]^-{A_2} \ar_-{p_2 \x (\quot \circ ev_n) \x (\quot \circ ev_n)}[d]  & \K^{fr} \ar^-{\quot \circ ev_n}[d] \\
\Cact^1(2) \x \tAM_n^{fr} \x \tAM_n^{fr} \ar[r]^-{\alpha_2}  & \tAM_n^{fr} 
}
\end{equation}
commutes up to homotopy.  
\end{theorem}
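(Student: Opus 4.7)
The plan is to construct an explicit homotopy between the two compositions in \eqref{Eq:browder-square} from a one-parameter family of ``stretch-then-shrink'' maps akin to the one used in defining $\quot$. First I would reduce to showing commutativity on each piece $\Ov^1(2,\sigma,T)$ for fixed $\sigma \in \SS_2$ and $T \in \T_\sigma$ using the cellular structure from Theorem \ref{T:proj-is-cellular}, and arrange the resulting piecewise homotopies to agree on intersections. On such a piece both compositions become explicit maps $\Delta^n \to \tC_n^{fr}\la\R^d\ra$: the composition $\quot \circ ev_n \circ A_2$ evaluates the spliced knot $A_2([\bL,\sigma],f_1,f_2)$ at $\tau(\bt)$ and then applies the single shrinking map $H_n$, whereas $\alpha_2 \circ (p_2 \times (\quot \circ ev_n)^2)$ applies $\quot \circ ev_n$ to each $f_\ell$ separately and combines the resulting infinitesimal aligned maps $\phi^\ell$ via the Kontsevich operad insertions encoded by the b/w tree underlying $p_2[\bL,\sigma]$.

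The key observation is that these two recipes differ only in the \emph{order} of the shrinking and the insertion: the first shrinks the spliced configuration all at once, while the second shrinks each summand separately and then inserts via the Kontsevich operad. Because $\tC_n^{fr}\la\R^d\ra$ records only the limiting direction vectors $v_{ij}$ and the framings (and not absolute positions or scales), the discrepancy between the two outputs is confined to the cross-lobe direction vectors: $\alpha_2$ produces the axial direction $(\pm 1, 0, \ldots, 0)$ exactly (inherited from the skeleton $\mathbf{e}^{|b(C)|}$), whereas $\quot \circ ev_n \circ A_2$ produces cross-$L_\ell$ directions with small off-axis components coming from the $D^{d-1}$ factor. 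The homotopy will wash out this discrepancy by stretching the spatial separation between the $L_\ell$-regions along the long axis until the cross-directions align exactly with the axis.

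Concretely, for $s \in [0,1)$ I would introduce a smooth family of long-axis embeddings $e_s: \R^d \to \R^d$ that fix each region $L_\ell \times D^{d-1}$ setwise but translate them apart (nesting them in the tree-ordered fashion dictated by $T$ when one lobe lies above another), with the between-cluster translation tending to infinity as $s \to 1$. Setting $F_s := \bigl(\lim_{t \to 1} H_n(-,t)\bigr) \circ (e_s)_* \circ \overline{ev_n(A_2)} \circ \tau$ gives a homotopy with $F_0 = \quot \circ ev_n \circ A_2$. Although $e_s$ does not extend as an embedding to $s=1$, the composition $F_s$ extends continuously to $s=1$ into $\tC_n^{fr}\la\R^d\ra$ because the final shrinking collapses the stretching. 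A careful bookkeeping, using that $\rho_\ell(C)$ is the affine-linear inverse of the inclusion of $L_\ell$ into $\I$ and that $S_\ell(C,\bt)$ records precisely the indices $i$ for which $t_i$ lies on lobe $\ell$, shows that $F_1$ coincides pointwise with $\alpha_2(p_2[\bL,\sigma], \phi^1, \phi^2)$.

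The principal obstacles are threefold. First, continuity at $s=1$, where clusters separate to infinity, must be checked; this is handled by the same limiting-direction arguments used in the continuity proof of Theorem \ref{T:action}, together with the boundary behavior of the coface maps noted in Remark \ref{R:AM-behavior-at-pm-1}. Second, each $F_s$ must remain in $\tAM_n^{fr}$: the alignment conditions on $\partial \Delta^n$ (collisions and escapes to $\pm \infty$) must be preserved throughout, which holds because $e_s$ respects the distinguished boundary-point structure and the identifications in $\tC_n^{fr}\la\R^d\ra$ are invariant under long-axis translation. Third, the homotopies on different pieces $\Ov^1(2,\sigma,T)$ must agree on their intersections; I would arrange this by defining $e_s$ uniformly on $\Ov^1(2)$ in terms of the combinatorial data of the underlying b/w tree, which varies continuously across cells thanks to the cellular structure of Theorem \ref{T:proj-is-cellular}. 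The hardest part will be the explicit matching of $F_1$ with the cactus formula \eqref{Eq:Action} when $T$ has nested lobes, which requires the stretching in $e_s$ to be implemented in the correct tree-ordered manner.
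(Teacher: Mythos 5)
Your overall strategy (interpolate between the two compositions by a shrinking-type homotopy whose limit exists only after passing to $\tC_n^{fr}\la\R^d\ra$) is in the right spirit, but the proposal has genuine gaps, the main one being your claim that the discrepancy between the two compositions ``is confined to the cross-lobe direction vectors,'' which is what lets you hope that a single translate-apart-then-shrink family $e_s$ finishes the job. That claim is false, for three reasons. First, within each lobe the knot-level action $L\cdot f = (L\x\mathrm{id}_{D^{d-1}})\circ f\circ(L\x\mathrm{id}_{D^{d-1}})^{-1}$ rescales only the long-axis coordinate, so the direction vectors (and frames) between configuration points coming from a single $f_\ell$ are distorted anisotropically relative to those of $\quot\circ ev_n(f_\ell)$; your $e_s$, being a translation of clusters, never corrects this (the paper's proof devotes Step 2 to replacing $\cdot$ by an isotropic action $\widehat\cdot$). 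Second, the frames disagree: by the chain rule the knot side produces $GS(AB)$ at a point where both knots contribute, whereas the Kontsevich insertion on the cactus side produces $GS(A)GS(B)$; these are homotopic but not equal, so an extra frame-correcting homotopy is unavoidable (the paper's Step 4). Third, the domain parametrizations differ: on the knot side the times $t_i$ are distributed according to the actual lengths of the $L_\ell$, while $\alpha_2\circ(p_2\x(\quot\circ ev_n)^2)$ uses the maps $\rho_\ell$ of the normalized cactus $p_2[\bL,\sigma]$ (whose lobes each have length $1$, independent of $|L_\ell|$); your statement that $\rho_\ell$ is ``the affine-linear inverse of the inclusion of $L_\ell$ into $\I$'' is not what Definition \ref{D:action} gives. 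Consequently $F_1$ cannot coincide pointwise with $\alpha_2(p_2[\bL,\sigma],\phi^1,\phi^2)$; one still needs a reparametrization homotopy, e.g.\ via contractibility of $\Homeo^+(\I)$, which is exactly the paper's Step 5.

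There is also a structural problem with the mechanism itself in the nested case. When one lobe lies above the other (one interval contained in the other), the cactus-side output inserts the inner configuration \emph{infinitesimally} at a point of the outer configuration, rotated by the outer frame; the cross-cluster direction vectors are then directions internal to $\phi^1$, not axial. No long-axis translation $e_s$ can produce this: translating the inner cluster away makes the cross directions axial (wrong), while not translating leaves the two clusters at finite relative scale (also wrong, since in $\tC_n^{fr}\la\R^d\ra$ the inserted cluster must have a single location and a single direction vector to each outer point). What is needed is a \emph{relative} shrinking of the inner knot inside the outer one, which is how the paper proceeds: after passing to cores $L_i^\circ$ (its Step 1, which ensures the shrinking map $H$ has the right behavior at the interval boundaries), it applies the family $H(-,t)$ to each $f_{\sigma(i)}$ \emph{before} composing the embeddings, so that in the limit the composition of embeddings automatically renders the inner configuration infinitesimal and correctly rotated, and renders side-by-side configurations infinitely far apart. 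Your reduction to cells $\Ov^1(2,\sigma,T)$ and your continuity discussion at $s=1$ are reasonable, but without the relative shrinking, the isotropy correction, the frame correction, and the final reparametrization, the proposed homotopy does not terminate at the cactus-side composition.
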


\begin{figure}[h!]
\raisebox{2.5pc}{\includegraphics[scale=0.5]{cactus-2-circle.pdf}}
\qquad
\includegraphics[scale=0.18]{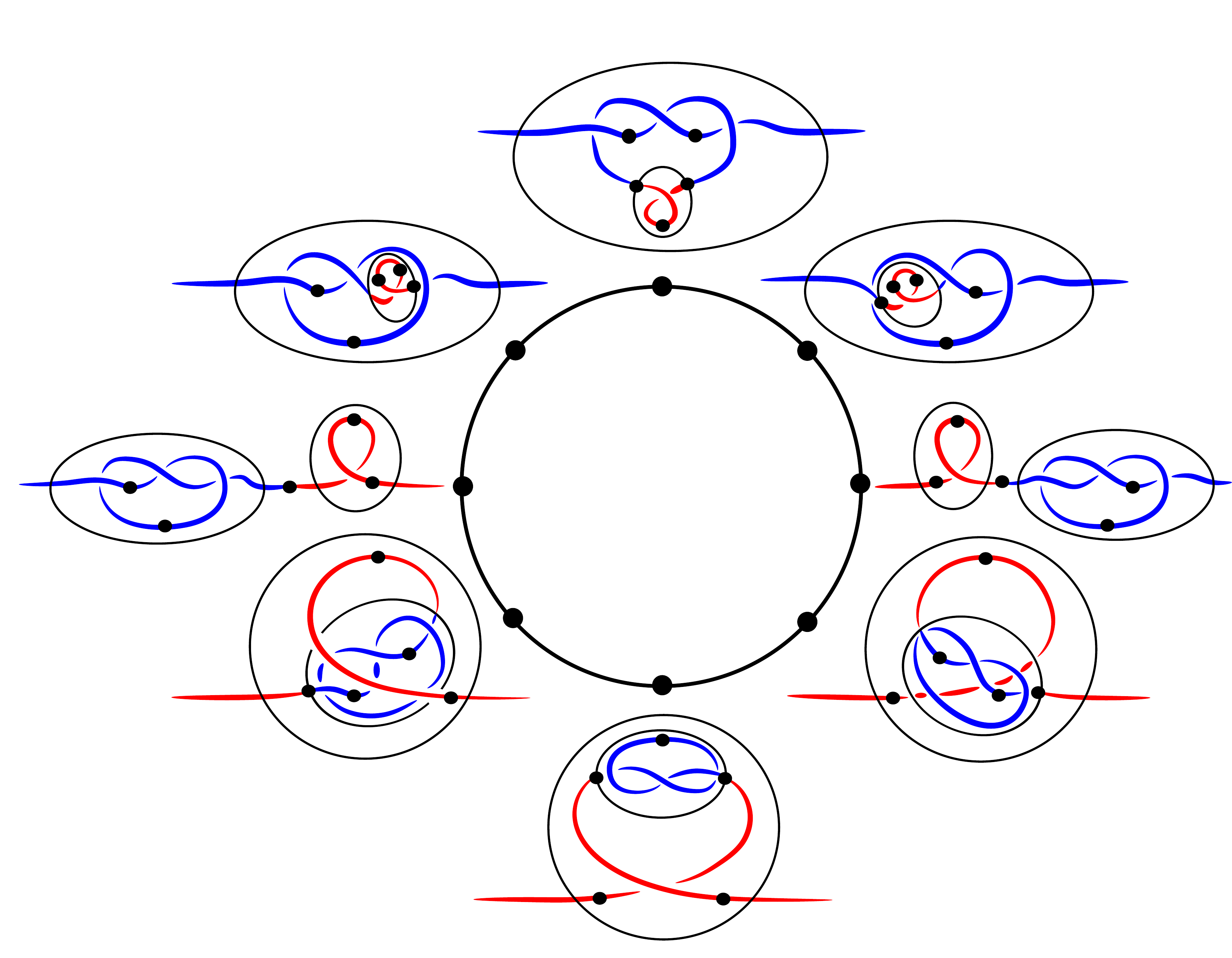}
\caption{On the left are eight sample points in $\Cact^1(2)$, and on the right is shown the action of each one on the images of two (framed) long knots under $q \circ ev_5$, i.e., the composition through the lower-left corner of the square \eqref{Eq:browder-square} for $n=5$.  Because of the map $q$, both the images of both knots under $ev_5$ are shrunk to infinitesimal size.  A circle around a knot represents a magnifying glass that makes this infinitesimal knot visible.  Without it, each knot would occupy just a point.  Near that point, where one interpolates between finite and infinitesimal scales, the embedding is either standard or given by the the larger-scale knot.  Each knot is colored by the color of the corresponding interval of top height; see also Figure \ref{F:arity2}.  The points along each knot are the images of the following points in $\I$: 
\[t_1=-0.75, \quad t_2=-0.5, \quad t_3=0, \quad t_4=0.5, \quad t_5=0.75.\]
The composition through the upper-right corner of the square \eqref{Eq:browder-square} yields configuration points along knots obtained by Budney's overlapping intervals action.
It gives pictures similar to the ones shown, but with no magnifying glasses (see Step 3 in the proof of Theorem \ref{T:browder-compatible-space-level}), some rescaling by factors less than 1 along the long axis (see Step 2), different framings (not shown, see Step 4), and a reparametrization of $\I$ whose configuration space is the domain of an aligned map (see Step 5).  Since the map $\Ov^1(2) \to \Cact^1(2)$ collapses two arcs onto the large points on the horizontal diameter, this composition through the upper-right corner gives an interval's worth of outputs for each of the these points. 
}
\label{F:cactus-knots-action}
\end{figure}

\begin{proof}
At a conceptual and informal level, the two compositions in the square \eqref{Eq:browder-square} differ in a handful of ways.  To describe the most significant difference, let $\phi\in \tAM_n^{fr}$ be an element in the image of one of the two compositions.  We loosely view a configuration in the output of $\phi$ as made up of two configurations, each coming from one of the two framed knots.  For the composition through the lower-left corner, one is at infinitesimal scale relative to the other, as indicated by the magnifying glasses in Figure \ref{F:cactus-knots-action}.  At the two vertices of $\Cact^1(2)$ on the horizontal diameter in Figure \ref{F:cactus-knots-action}, one can view either configuration as infinitesimal relative to the other one because relative to their scales they are infinitely far apart here.  For the composition through the upper-right corner however, the relative scale of the two configurations is finite, i.e., the ratio of the distance between their centers of mass to the diameter of either one is finite.  Along the two arcs in $\Ov^1(2)$ that are collapsed to two points in $\Cact^1(2)$ (as shown in Figure \ref{F:arity2}), there are two 1-parameter families of output configurations which vary in the composition through the upper-right corner but are constant families when we go through the lower-left corner.

There are a few more minor differences.  The action of an interval on a framed knot scales it only in the first coordinate, whereas the action of cacti on an aligned map scales it by the same factor in each coordinate.  The frames at each point differ between the two compositions.  Finally, the two compositions may differ by a reparametrization of $\I$, whose configuration space is the domain of an aligned map.

The homotopy that we will construct is similar to the one in the proof of compatibility of little intervals actions on $AM_n^{fr}$ and $\tAM_n^{fr}$ \cite[Proposition 4.12]{BCKS:2017}.
It will consist of a handful of steps, i.e., we will concatenate multiple homotopies, and these will mostly correspond to the handful of differences we outlined between the two compositions.  
The boxed formulas below show starting points of the homotopies we will concatenate.  
All but one of the steps below will factor through $K^{fr}$, i.e., they will arise from continuous maps
 $\Ov(2) \x (\K^{fr})^2 \x [0,1] \to \K^{fr}$.
The starting point of the homotopy is the map 
\begin{equation}
\boxed{
\bigl( [(L_1,L_2),\sigma] , (f_1, f_2) \bigr) \mapsto (\quot \circ ev_n) \bigl( L_{\sigma(1)} \cdot f_{\sigma(1)} \circ 
L_{\sigma(2)} \cdot f_{\sigma(2)} \bigr)
}
\end{equation}
where $L \cdot f$ is defined as in formula \eqref{Eq:single-interval-action}.

{\bf Step 1:}
We start by shrinking the two intervals $L_i$ to certain sub-intervals $L_i^\circ$, which we view as their cores.  
Ultimately, we will shrink configurations associated to the $L_i$, and this step will allow us to interpolate between a shrinking map and the identity map outside of $L_i$.  A key feature of the core will be that for any pair of intervals mapping to a corolla cactus and $i\neq j \in \{1,2\}$, the center of $L_i$ is sent to a point at infinity by $L_j^\circ \cdot H(-,1)$, where $H$ is the shrinking homotopy defined in formula \eqref{Eq:shrinking-H}.  Indeed the factor of $4$ below comes from the fact that $H(-,t)$ is the identity only outside $[-4,4]$.

For a little interval $L:\I \to \I$, define $L^\circ: \I \to \I$ by 
\[
L^\circ(t):=L(t/4).
\]
Thus $L^\circ(\I)$ is the inner fourth of $L(\I)$.
Perform a straight-line homotopy $\Ov^1(2) \x [0,1] \to \Ov(2)$ from $[(L_1, L_2), \sigma]$ to $[(L_1^\circ, L_2^\circ), \sigma]$.  Since each $L_i^\circ$ is a subinterval of $L_i$, $L_1(\I)^\circ \cap L_2^\circ (\I) \neq \varnothing$ implies $L_1(\I) \cap L_2(\I) \neq \varnothing$, so this homotopy is well defined, no matter which permutation $\sigma$ is chosen in a representative.  
Applying the maps $\Ov^1(2) \x (\K^{fr})^2 \to \K^{fr}$ and then $\quot \circ ev_n$ gives a homotopy of maps $\Ov^1(2) \x (\K^{fr})^2 \to \tAM_n^{fr}$ starting at the composition through the upper-right corner in square \eqref{Eq:browder-square}.
Its ending point is the map 
\begin{equation}
\label{Eq:end-step-1}
\boxed{\bigl( [(L_1,L_2),\sigma] , (f_1, f_2) \bigr) \mapsto
(\quot \circ ev_n) \bigl( L_{\sigma(1)}^\circ \cdot f_{\sigma(1)} \circ 
L_{\sigma(2)}^\circ \cdot f_{\sigma(2)} \bigr)
}.
\end{equation}

{\bf Step 2:}  
We will now correct for the fact that the little intervals action scales only the first coordinate rather than all $d$ coordinates by the same factor.
Let $S_{\sigma(i)}^\circ$ be the linear map that scales by the same factor as $L_{\sigma(i)}^\circ$ but fixes 0.
Apply a path of maps parametrized by $t\in [0,1]$ which ends at a map where each of the two factors
$L_{\sigma(i)}^\circ \cdot f_{\sigma(i)}=
(L_{\sigma(i)}^\circ \x \mathrm{id}_{D^{d-1}}) \circ f_{\sigma(i)} \circ (L_{\sigma(i)}^\circ \x \mathrm{id}_{D^{d-1}})^{-1}$
is replaced by an element of $\K^{fr}$ satisfying 
\begin{equation}
\label{Eq:scale-by-same-factor}
\bx \mapsto \left\{
\begin{array}{ll}
(L_{\sigma(i)}^\circ \x  S_{\sigma(i)}^\circ \dots \x S_{\sigma(i)}^\circ) \circ f_{\sigma(i)} \circ (L_{\sigma(i)}^\circ \x \mathrm{id}_{D^{d-1}})^{-1}(\bx) &  \text{ if } x_1 \in L_{\sigma(i)}^\circ(\I) \\
(L_{\sigma(i)}^\circ \x \mathrm{id}_{D^{d-1}}) \circ f_{\sigma(i)} \circ (L_{\sigma(i)}^\circ \x \mathrm{id}_{D^{d-1}})^{-1}(\bx) & \text{ if } x_1 \notin L_{\sigma(i)}(\I).
\end{array}
\right.
\end{equation}
and which at any point in $L_{\sigma(i)}(\I) \setminus L^\circ_{\sigma(i)}(\I)$ is a weighted average of the two maps above.  
Since all the functions $\R \to \R$ in \eqref{Eq:scale-by-same-factor} are affine-linear and increasing, the resulting map is a smooth embedding, for any choice of smooth partition of unity that interpolates between the two formulas shown.
(Notice also that the long axis is mapped to itself outside of $L^\circ_{\sigma(i)}$.)
A path to this map can be obtained by a straight-line homotopy.  
We compose the result with $\quot \circ ev_n$ and write the ending point as
\begin{equation}
\label{Eq:end-step-2}
\boxed{\bigl( [(L_1,L_2),\sigma] , (f_1, f_2) \bigr) \mapsto
(\quot\circ ev_n) \bigl(
L_{\sigma(1)}^\circ \, \widehat{\cdot} \, f_{\sigma(1)} \circ 
L_{\sigma(2)}^\circ \, \widehat{\cdot} \, f_{\sigma(2)}
\bigr)
} 
\end{equation}
where $L_{\sigma(i)}^\circ \, \widehat{\cdot} \, f_{\sigma(i)}$ is shorthand for the map satisfying the properties in \eqref{Eq:scale-by-same-factor}.
That is, $L \, \widehat{\cdot}\, f$ denotes an action of a single interval $L$ on a framed knot $f$ which essentially differs from the action $\cdot$ in that it scales all $d$ coordinates by the same factor throughout $L$.

{\bf Step 3:}  
This is the key step in the homotopy.
Roughly, we will shrink to infinitesimal size the configuration associated to each interval $L_i$.  
We will describe the shrinking as a map to $\K^{fr}$, which will be defined just up to, but not including, the time when the size becomes infinitesimal.
The shrinking for each $L_i$ will ensure first that for a pair of intervals mapping to a corolla cactus, the two knots appear infinitely far apart, roughly because the $L_i^\circ$ are sufficiently far apart.  Second, for any other pair of intervals, it will ensure that the knot $f_{\sigma(2)}$ for the smaller interval $L_{\sigma(2)}$ appears infinitesimal relative to $f_{\sigma(1)}$ because it incurs an extra application of the shrinking homotopy.
The map $\quot: AM_n^{fr} \to \tAM_n^{fr}$ already introduces a shrinking of the whole configuration, but this poses no problems because configurations in $\tC_n^{fr}\la \R^d\ra$ are considered modulo scaling and translation.

Recall the family $H$ of smooth embeddings defined by formula \eqref{Eq:shrinking-H}.
By abuse of notation, let $H$ now denote the restriction of the latter family to a map $(\R \x D^{d-1}) \x [0,1] \to \R \x D^{d-1}$.  We apply the path $t \mapsto H(-,t)$ within each of the two factors $L_{\sigma(i)} \, \widehat{\cdot} \ f_{\sigma(i)}$ in \eqref{Eq:end-step-2}.  
More specifically, we consider the path of maps with $t \in[0,1]$ given for $t \in [0,1)$ by 
\begin{equation}
\label{Eq:step-3-part-1}
\bigl( [(L_1,L_2),\sigma] , (f_1, f_2) \bigr) \longmapsto 
(\quot \circ ev_n) 
\Bigl(
\bigl(L_{\sigma(1)}^\circ \, \widehat{\cdot} \ (H(-,t) \circ f_{\sigma(1)} ) \bigr)  \, \circ \,
\bigl(L_{\sigma(2)}^\circ \, \widehat{\cdot} \ ( H(-,t) \circ f_{\sigma(2)} ) \bigr)
\Bigr)
\end{equation}
and for $t=1$ by the limit of this map as $t$ approaches 1.  
This limit will exist essentially because we have post-composed with $(\quot\circ ev_n)$ to take us from $\K^{fr}$ to $\tAM_n^{fr}$.  
Indeed, by adjunction, we are considering a map $\Ov^1(2) \x (\K^{fr})^2 \x \Delta^n \x [0,1) \to \tC_n^{fr} \la \R^d\ra$, where the limit as $t\to 1$ exists, for the same reasons as indicated in the paragraph immediately after formula \eqref{Eq:shrinking-H}.
Here we again indicate a proof by describing the limit map in terms of its effect on points $(t_1,\dots,t_n)\in \Delta^n$.  

The $t_i$ in $L_{\sigma(1)}^\circ[-2,2]$ produce configuration points in an infinitesimal configuration at $L_{\sigma(1)}(0)$.
The $t_i$ in $L_{\sigma(1)}^\circ(-2,2)$ yield points at finite distance from each other in the infinitesimal configuration, while the $t_i$ at $L_{\sigma(1)}^\circ(\pm 2)$ yield points at $(\pm \infty, 0^{d-1})$ in it.
Any $t_i$ which approach $L_{\sigma(1)}^\circ(\pm 2)$ from the right or left respectively yield points which approach these two points at infinity from outside of the infinitesimal configuration.   
Similarly, those $t_i$ in or near $L_{\sigma(2)}^\circ[-2,2]$ are mapped to points in or near an infinitesimal configuration at 
$f_{\sigma(1)}(L_{\sigma(2)}(0),0^{d-1})$.  
This configuration lies within the first infinitesimal configuration if $L_{\sigma(1)}^\circ[-2,2] \cap L_{\sigma(2)}^\circ[-2,2] \neq \varnothing$, in which case it is rotated by the orthonormal frame of $f_{\sigma(1)}$ at $(L_{\sigma(2)}(0),0^{d-1})$.
The $t_i$ at $L_{\sigma(2)}^\circ(\pm 2)$ yield points at $(\pm \infty, 0^{d-1})$ in this second infinitesimal configuration, along the line tangent to $f_{\sigma(1)}|_{\R \x \{0^{d-1}\}}$ at $f_{\sigma(1)}(L_{\sigma(2)}(0),0^{d-1})$.  

At the end of Step 3, we have the map
\begin{equation}
\label{eq:end-step-3}
\boxed{
\bigl( [(L_1,L_2),\sigma] , (f_1, f_2) \bigr) \longmapsto
\lim_{t\to 1}(\quot \circ ev_n) 
\Bigl(
\bigl(L_{\sigma(1)}^\circ \, \widehat{\cdot} \ (H(-,t) \circ f_{\sigma(1)}) \bigr)  \ \circ \
\bigl(L_{\sigma(2)}^\circ \, \widehat{\cdot} \ (H(-,t) \circ f_{\sigma(2)}) \bigr)
\Bigr)
}
\end{equation}
where $H$ is the family of smooth shrinking maps defined in formula \eqref{Eq:shrinking-H}.

It is worth noting that if $L_{\sigma(2)} \cap \d \I \neq \varnothing$, then after this step, $f_{\sigma(2)}$ appears infinitely far from $f_{\sigma(1)}$, even if the interiors of the intervals overlap, i.e., even if $|L_{\sigma(1)}|>1$.  Indeed, $f_{\sigma(2)}$ is shrunk to a point $\pm 1/2$ on the long axis.  This point lies outside of $L^\circ_{\sigma(1)}((-2,2))$, which by the definition of $H$ is the interval corresponding to configuration points that appear to be a finite distance from $f_{\sigma(1)}$.

{\bf Step 4:}  
We have now obtained the desired output configurations up to reparametrization, as we will see in Step 5.  However, the frames still need to be modified.  
Let $F$ denote the composition $\alpha_2 \circ (p_2 \x (ev_n)^2)$ through the lower-left corner in \eqref{Eq:browder-square}, and let $G$ denote the map $\Ov^1(2) \x (\K^{fr})^2$ obtained at the end of Step 3.
Fix $t\in \I$.  Let $A:=Df_{\sigma(1)}( (L^\circ_{\sigma(1)})^{-1}(t), 0^{p-1})$, and let $B:=Df_{\sigma(2)}( (L^\circ_{\sigma(2)})^{-1}(t),0^{p-1})$. 
Recall that the Gram--Schmidt map $GS$ appears in the definition of ${ev}_n$ and that it is essentially applied before shrinking a framed knot to infinitesimal size.
An aligned map in the image of $G$ thus sends $t$ to $GS(AB)$, whereas an aligned map in the image of $F$ sends a time corresponding to $t$ (under an appropriate reparametrization) to $GS(A)GS(B)$.
Although these two matrices are not equal, they are homotopic in $GL(d)$.  Indeed, $GS: GL(d) \to GL(d)$ is homotopic to the identity, and thus each of $GS(AB)$ and $GS(A)GS(B)$ is homotopic to $AB$.  
In this step we perform a homotopy $\Ov^1(2) \x (\K^{fr})^2 \x \Delta^n \x [0,1] \to \tC_n^{fr} \la\R^d\ra = \tC_n\la\R^d\ra \x GL(d)^n$ which implements the homotopy from $GS(AB)$ to $GS(A)GS(B)$ at each of the $n$ frames, over all overlapping intervals elements and all framed knots.

{\bf Step 5:}  
Again let $F$ denote the composition $\alpha_2 \circ (p_2 \x (ev_n)^2)$ through the lower-left corner in \eqref{Eq:browder-square}, but now let $G$ denote the map $\Ov^1(2) \x (\K^{fr})^2$ obtained at the end of Step 4.
In this last step, we will connect $G$ to $F$ by essentially reparametrizing $\I$.
Let $[\bL, \sigma] \in \Ov^1(2)$ and $f_1, f_2 \in \K^{fr}$.
Suppose for now that $\sigma=\mathrm{id}$.  
Let $\phi, \phi' \in \tAM_n^{fr}$ be the images of $([\bL, \sigma] , f_1, f_2)$ under $F$ and $G$ respectively.  
A reparametrization will suffice because the output configuration points and frames from both $\phi$ and $\phi'$ roughly speaking trace out the shape of $f_1$ with $f_2$ inserted at infinitesimal scale at a certain point.

We now more carefully justify the claim that $\phi$ and $\phi'$ agree up to reparametrization.
Fix $\bt = (t_1, \dots, t_n)$ in $\Delta^n$, and partition the configuration points in $\phi(\bt)$ and $\phi'(\bt)$ into two parts: 
\begin{enumerate}[(I)]
\item those corresponding to $t_i$ which lie outside of $L_2$ and 
\item those corresponding to $t_i$ which lie in $L_2$.  
\end{enumerate}
Suppose that $k$ of the $n$ points lie in part (I).

For the following key observations, recall that the output configuration of an element of $\tAM_n^{fr}$ is considered only up to translation and scaling.  
For $\phi$, the collection of direction vectors between either a pair of points in part (I) or a point in part (I) and a point in part (II) can be obtained by applying $\quot \circ ev_k$ to $f_1 \circ (\rho_1 \x \mathrm{id}_{D^{d-1}})$ for a nondecreasing piecewise-smooth surjection  $\rho_1: \R \to \R$ that is the identity outside of $\I$.
The frame at a point in part (I) corresponding to time $t$ is $GS(Df_1 \circ (\rho_1(t), 0^{d-1}))$.
Similar statements are true for $\phi'$, with the role of $\rho_1$ played by a possibly different function $\rho_1':\R\to \R$ which satisfies the same properties as $\rho_1$.

Likewise, for $\phi$, the collection of direction vectors between points in part (II) can be obtained by applying $\quot \circ ev_{n-k}$ to $f_2 \circ (\rho_2 \x \mathrm{id}_{D^{d-1}})$ for a nondecreasing piecewise-smooth surjection $\rho_2: \R \to \R$ which is the identity outside of $\I$ and such that $\I$ is partitioned into closed intervals on which one of $\rho_1$ and $\rho_2$ is constant and one is strictly increasing.  
The frame at a point in part (II) corresponding to time $t$ is $GS(Df_1 \circ (\rho_1(t), 0^{d-1})) GS(Df_2 \circ (\rho_2(t), 0^{d-1}))$.
Similar statements are true for $\phi'$, with the role of $\rho_2$ played by a possibly different map $\rho_2': \R \to \R$.

Here the functions $\rho_1$ and $\rho_2$ are those defined in Section \ref{S:lobe-param}.  The claimed effect of $\phi'$ on configuration points is guaranteed by Steps 2 and 3, while the claimed effect of $\phi'$ on frames is guaranteed by Step 4.

The last point in checking that $\phi$ and $\phi'$ agree up to reparametrization is that for a fixed $\bt \in \Delta^n$, the conglomerations of points in part (II) in $\phi(\bt)$ and $\phi'(\bt)$ have the same location.  Indeed, suppose these conglomerations for $\phi(\bt)$ and $\phi'(\bt)$ are located at $f_1(t,0^{d-1})$ and $f_1(t',0^{d-1})$ respectively.  Let $s_0$ (respectively $s_1$) be the distance between the left (respectively right) endpoints of $\I$ and $L_2$.  That is, $s_0$ and $s_1$ are the gaps outside $L_2$.  Then the ratios $(t+1):(1-t)$ and $s_0:s_1$ are equal by Definition \ref{D:AM-quot}, while $(t'+1):(1-t')$ is the ratio of the two subintervals of $L^\circ_1([-2,2]) = L_1([-1/2, 1/2])$ on either side of the midpoint of $L_2$, which is also $s_0:s_1$.  So $t=t'$, as claimed.

Finally, we describe the overall reparametrization of $\I$ that will complete the homotopy.
Let $\rho: \R \to \R$ be the piecewise-smooth homeomorphism which is the identity outside of $\I$ and whose derivative agrees with that of $\frac{1}{2}(\rho_1+ \rho_2)$.
Similarly define a homeomorphism $\rho': \R \to \R$ with the roles of $\rho_1$ and $\rho_2$
played by $\rho_1'$ and $\rho_2'$.
By the constructions of $F$ and $G$, the functions $\rho_i$ and $\rho_i'$ do not depend on $f_1$, $f_2$, or $\bt$.  They depend continuously on $\bL \in \Ov^1(2, \mathrm{id})$ because $\phi$ and $\phi'$ do.  
Thus so do $\rho$ and $\rho'$.  
If $\sigma \neq \id$, an analogous description applies, but with the roles of the indices $1$ and $2$ reversed, so we obtain families of homeomorphisms $\rho([\bL, \sigma])$ and $\rho'([\bL, \sigma])$ parametrized by $\Ov^1(2) \cong S^1$.
Since the space $\mathrm{Homeo}^+(\I)$ of orientation-preserving homeomorphisms $\I \to \I$ is contractible, there is a homotopy $\Ov^1(2) \x [0,1] \to \mathrm{Homeo}^+(\I)$ from $\rho'([\bL, \sigma])$ to $\rho([\bL, \sigma])$.  
Pre-composing the output of $G$ by the induced map 
$\Delta^n \x [0,1] \to \Delta^n$  
gives the required homotopy from $G$ to $F$. 
We thus obtain a homotopy from the composite through the upper-right corner to the composite $F$ through the lower-left corner in \eqref{Eq:browder-square}.
\end{proof}

\begin{remark}
A similar statement \cite[Proposition 4.12]{BCKS:2017} appeared for the compatibility of $\CC_1$-actions on $AM_n^{fr}$ and $\tAM_n^{fr}$.  The proof there is essentially very similar to the one above.  The latter is longer not so much because it is conceptually more difficult but rather because here we have thoroughly covered some details that were omitted in the former.
\end{remark}

\begin{corollary}
The multiplications and Browder brackets on $\K^{fr}$ from the $\CC_2$-action and on $\tAM_n^{fr}$ from the $\PCact$-action are compatible via the evaluation map.
\end{corollary}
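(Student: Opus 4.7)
The plan is to deduce the corollary from Theorem \ref{T:browder-compatible-space-level} by passing the full diagram \eqref{Eq:compatibility-diagram} (with $m=2$) to homology. First I would observe that the top, second, and bottom squares of \eqref{Eq:compatibility-diagram} each commute up to homotopy for essentially formal reasons: the top and bottom squares commute on the nose, since each pulls the action back along the left vertical map; the second square commutes strictly if $r_2$ is replaced by the inclusion $i_2: \Ov^1(2) \hookrightarrow \Ov(2)$, and so commutes up to homotopy as written because $r_2$ and $i_2$ are mutually inverse homotopy equivalences by Theorem \ref{T:Ov^1-eqv-to-Ov}. Concatenating these with the third square, supplied by Theorem \ref{T:browder-compatible-space-level}, yields an outer rectangle
\[
\xymatrix@R1.2pc{
\CC_2(2) \x (\K^{fr})^2 \ar[r]^-{A_2} \ar[d]_-{\simeq} & \K^{fr} \ar[d]^-{\quot \circ ev_n} \\
\PCact(2) \x (\tAM_n^{fr})^2 \ar[r]^-{\alpha_2} & \tAM_n^{fr}
}
\]
which commutes up to $\SS_2$-equivariant homotopy, and whose left vertical arrow is a homotopy equivalence by Theorem \ref{T:B}.

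With this square in hand, the corollary reduces to a formal verification in homology. For the multiplication, I would choose any point $c_0 \in \CC_2(2)$; the resulting homotopy-commutative square of maps $(\K^{fr})^2 \to \tAM_n^{fr}$ translates, after applying $H_*(-)$ and the Künneth map, into $(\quot \circ ev_n)_*(a \cdot b) = (\quot \circ ev_n)_*(a) \cdot (\quot \circ ev_n)_*(b)$, where the two products are induced by $c_0$ and by its image in $\PCact(2)$ respectively. For the Browder operation, I would fix a generator $\beta \in H_1(\CC_2(2);\Z) \cong \Z$; its image $\beta'$ under the left vertical homotopy equivalence is automatically a generator of $H_1(\PCact(2);\Z) \cong \Z$. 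Since the Browder bracket $[a,b]$ on an $E_2$-algebra $X$ is by definition the image of $\beta \otimes a \otimes b$ under the homology of the arity-$2$ action map composed with the Künneth map, the homotopy commutativity then yields $[(\quot \circ ev_n)_* a,\, (\quot \circ ev_n)_* b] = (\quot \circ ev_n)_*[a,b]$.

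The main substantive work was already accomplished in Theorem \ref{T:browder-compatible-space-level}; the remaining task is purely bookkeeping. The only mild subtlety is matching the chosen generators of $H_1$ on the two sides, which is automatic up to sign because the left vertical arrow is a homotopy equivalence between spaces each equivalent to $S^1$, and any sign can be absorbed into the choice of $\beta'$.
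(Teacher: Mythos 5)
Your proposal is correct and takes essentially the same route as the paper: both deduce the corollary by splicing Theorem \ref{T:browder-compatible-space-level} into the full diagram \eqref{Eq:compatibility-diagram}, condensing to a single homotopy-commutative square, and then reading off the multiplication (by fixing a point of the arity-$2$ space) and the Browder bracket (by using the degree-one class of the arity-$2$ spaces, each equivalent to $S^1$) after applying homology. The only small inaccuracy is your claim that the resulting homotopy is $\SS_2$-equivariant---Theorem \ref{T:browder-compatible-space-level} does not assert this (equivariance is only addressed later for the Dyer--Lashof case)---but it is not needed for either the multiplication or the Browder operation, so the argument stands.
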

\begin{proof}
The homotopy commutativity of the square \eqref{Eq:browder-square} established in Theorem \ref{T:browder-compatible-space-level} implies the homotopy commutativity of the rectangle \eqref{Eq:compatibility-diagram}.  
Although we do not have the full structure of an operad map between  $\CC_2$ and $\PCact$, it is easily verified from the definition of the product and Browder bracket in homology that the latter commutativity implies the desired compatibilities.
\end{proof}

The proof of the assertions about the multiplication and Browder operation in Theorem \ref{T:A} is now complete.  
It remains to address the compatibility of the Dyer--Lashof operations.

\subsection{The Dyer--Lashof operations}

\begin{theorem}
\label{T:dyer-lashof-compatible-space-level}
For any $m\geq 1$, the square 
\begin{equation}
\label{Eq:dyer-lashof-square}
\xymatrix@R1.5pc{
\Ov^1(m) \x_{\SS_m} \left(\K^{fr}\right)^m   \ar[r]^-{A_m} \ar[d]
\ar_-{p_m \x (\quot \circ ev_n)^m}[d]  
& \K^{fr} \ar^-{\quot \circ ev_n}[d] \\
\Cact^1(m) \x_{\SS_m} \left(\tAM_n^{fr}\right)^m \ar[r]^-{\alpha_m}  & \tAM_n^{fr} 
}
\end{equation}
commutes up to homotopy.  
\end{theorem}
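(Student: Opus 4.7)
The plan is to adapt the five-step homotopy from the proof of Theorem \ref{T:browder-compatible-space-level} to arbitrary arity $m$. As in that proof, the strategy is to construct a concatenation of homotopies of maps $\Ov^1(m) \times (\K^{fr})^m \to \tAM_n^{fr}$ starting at the composition through the upper-right corner of \eqref{Eq:dyer-lashof-square} and ending at the composition through the lower-left corner. All but one of the intermediate homotopies will factor through $\K^{fr}$ via the map $\quot \circ ev_n$. The $\SS_m$-equivariance of each step, combined with the $\SS_m$-equivariance of the maps in the diagram, yields the descent to the quotient by $\SS_m$ used to state the theorem.

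The steps proceed as follows. \textbf{Step 1:} Shrink each little interval $L_i$ to its inner fourth $L_i^\circ$ by a straight-line homotopy in $\Ov(m)$; this is well defined since the shrunk intervals inherit the overlap conditions on $[\bL, \sigma]$. \textbf{Step 2:} Homotope the single-interval action $L_i^\circ \cdot f_i$ to a modified action $L_i^\circ \, \widehat{\cdot} \, f_i$ that scales all $d$ coordinates by the same factor within $L_i^\circ$, using a straight-line homotopy between the two formulas interpolated by a smooth partition of unity on the transition region $L_i(\I) \setminus L_i^\circ(\I)$. \textbf{Step 3:} Apply the shrinking family $H(-,t)$ of formula \eqref{Eq:shrinking-H} simultaneously within each knot factor $f_{\sigma(i)}$, and take the limit as $t \to 1$ after post-composing with $\quot \circ ev_n$. \textbf{Step 4:} Correct the frames by a homotopy in $GL(d)^n$ from $GS(A_1 \cdots A_k)$ to $GS(A_1) \cdots GS(A_k)$ at each configuration point, using the fact that the Gram--Schmidt retraction is homotopic to the identity. \textbf{Step 5:} Reparametrize $\I$ by a homotopy of elements in $\mathrm{Homeo}^+(\I)$ parametrized by $\Ov^1(m)$, using the contractibility of $\mathrm{Homeo}^+(\I)$ to go from the piecewise-linear speed profile implicit in the cactus-action formula \eqref{Eq:Action} to the smooth speed profile produced by the knot composition.

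The main obstacle for general $m$ lies in Step 3, where we must verify that the limit map produces precisely the iterated nested infinitesimal configurations prescribed by the tree $T(C,\bt)$ in Definition \ref{D:action}. The key geometric fact is that for a point $[\bL, \sigma] \in \Ov^1(m, \sigma, T)$ projecting to a cactus $C \in \Cact^1(m)$ with underlying b/w tree $T$, the structure of $T$ dictates the nesting of the intervals $L_i$: if a white vertex $j$ lies above a white vertex $i$ in $T$, then by Definition \ref{D:normalized-overlapping} the interval $[x_j, y_j]$ is contained in $[x_i, y_i]$ and has strictly higher $\sigma$-height. Under the iterated shrinking induced by Step 3, this nesting translates into a hierarchy of infinitesimal configurations, one per level of the tree, with each one contained in the previous at infinitesimal scale. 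Moreover, for two white vertices directly above a common black vertex, the corresponding configurations appear at finite relative distance, mirroring the insertion of multiple lobes on the same arc of their parent lobe. The $\sigma$-height ordering ensures that the order of composition in \eqref{Eq:ov-action-on-knots} agrees with the order of insertion along the tree in \eqref{Eq:Action}.

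For Steps 1, 2, 4, and 5 the arguments are essentially formal extensions of the arity $2$ case. Continuity of the limit in Step 3 requires the same type of case-by-case analysis as in the proof of Theorem \ref{T:action}, treating approaches corresponding to angle collapses $T' \angle T$ as well as to changes of partition of $\{1, \dots, n\}$ across lobes; the crucial properties of infinitesimal aligned maps on $\partial \I$ recorded in Remark \ref{R:AM-behavior-at-pm-1} again allow configuration points to move continuously to and from infinity within each nested infinitesimal configuration. Taking integral homology of the resulting homotopy-commutative square \eqref{Eq:dyer-lashof-square}, combined with the equivalences of operads from Theorem \ref{T:B} and the commutativity of the remaining squares in \eqref{Eq:compatibility-diagram}, yields the claimed compatibility of Dyer--Lashof operations at all primes $p$ by specializing to $m = p$ and using a generator of $H_*(\PCact(p)/\SS_p)$ that matches the generator of $H_*(\CC_2(p)/\SS_p)$ under the maps in Theorem \ref{T:B}.
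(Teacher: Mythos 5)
Your reduction to an $\SS_m$-invariant homotopy and your Steps 1--4 follow the right strategy, but two interlocking problems appear at the end, and they are genuine gaps rather than omitted routine details. First, you keep the inner-fourth core $L^\circ(t)=L(t/4)$ for every $m$. The point of shrinking to a core is that after Step 3 the limit configuration must be \emph{exactly} of the form $\bigcirc_{T}\bigl(\mathbf{e}^{|b|},\phi^1_{S_1}(\rho_1'(\bt^1)),\dots,\phi^m_{S_m}(\rho_m'(\bt^m))\bigr)$ for the \emph{same} tree $T$ used by the cactus action; in particular, clusters belonging to two lobes neither of which lies above the other must end up infinitely far apart (at worst at points at infinity of each other's infinitesimal configurations). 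With the $1/4$ core this fails for $m\geq 3$: for a corolla cactus arising from, say, $L_1=[-1,1]$, $L_2=[-1/3,1]$, $L_3=[1/3,1]$, the center of $L_2$ pulls back under $L_1^\circ$ to $4/3\in(-2,2)$, so in the limit the $f_2$-cluster sits at a \emph{finite} point inside the $f_1$-cluster's infinitesimal configuration, while the corolla action places these clusters infinitely far apart with all cross direction vectors equal to $(\pm1,0,\dots,0)$. (This is why the paper's proof rescales the core to $L^\circ(t)=L(t/(2m))$.) Your statement that configurations of two white vertices above a common black vertex ``appear at finite relative distance'' also misdescribes the target: the cactus action inserts them at the same (collided) point, so all cross vectors coincide and each is infinitely far from the other relative to its scale.

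Second, and more seriously, your Step 5 imports the $m=2$ argument verbatim: one homotopy in $\Homeo^+(\I)$ parametrized by $\Ov^1(m)$. That argument worked for $m=2$ only because of a location-matching fact verified there (the inserted cluster sits at the same proportional position along the outer knot for both compositions, via the ratio $s_0:s_1$), so that $\phi$ and $\phi'$ genuinely agree up to a single reparametrization of the domain. For $m>2$ this fails in general: the attachment positions along a parent lobe dictated by $p_m$ (gaps between the black-vertex intervals, scaled by $m$) need not coincide with the positions where the shrunk knot composition actually deposits the inner clusters, and precomposing by a self-homeomorphism of $\I$ changes neither those output locations nor the constant values taken by the reparametrizations $\rho_i$ on the lobes above $i$; note also that the $\rho_i$ themselves are not homeomorphisms. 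So no single $h\in\Homeo^+(\I)$ can satisfy $\rho_i'=\rho_i\circ h$ for all $i$ simultaneously. The paper's proof handles this by writing both $\phi(\bt)$ and $\phi'(\bt)$ via the common tree formula and homotoping each $\rho_i'$ to $\rho_i$ \emph{separately} in the contractible space of (non-decreasing) maps $\I\to\I$ fixing $\d\I$ --- a homotopy that changes the values of the $\rho_i$, not merely the domain parametrization. As written, your Step 5 cannot bridge this discrepancy (nor repair the location defect created by the $1/4$ core), so the homotopy from the end of Step 4 to the cactus-action composite is not established.
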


\begin{proof}
First, $p_m$ is $\SS_m$-equivariant, so the left-hand vertical map in \eqref{Eq:dyer-lashof-square} is well defined.  It suffices to show that the square 
\[
\xymatrix@R1.5pc{
\Ov^1(m) \x \left(\K^{fr}\right)^m   \ar[r]^-{A_m} \ar[d]
\ar_-{p_m \x (\quot \circ ev_n)^m}[d]  
& \K^{fr} \ar^-{\quot \circ ev_n}[d] \\
\Cact^1(m) \x \left(\tAM_n^{fr}\right)^m \ar[r]^-{\alpha_m}  & \tAM_n^{fr} 
}
\]
commutes up to a homotopy $\Ov^1(m) \x (\K^{fr})^m \x[0,1] \to \tAM_n^{fr}$ which at each time is $\SS_m$-invariant, i.e., respects the relation given by the quotient $\Ov^1(m) \x (\K^{fr})^m \to \Ov^1(m) \x_{\SS_m} (\K^{fr})^m$.

We proceed as in the proof of Theorem \ref{T:browder-compatible-space-level}, so our exposition will be more concise.  
Again, we build a homotopy in four steps.  The starting point is the composition in \ref{Eq:dyer-lashof-square} through the upper-right corner: 
\begin{equation}
\bigl( [(L_1,\dots, L_m),\sigma] , (f_1,\dots, f_m) \bigr) \mapsto 
(\quot \circ ev_n) \bigl( L_{\sigma(1)} \cdot f_{\sigma(1)} \circ \dots \circ
L_{\sigma(m)} \cdot f_{\sigma(m)} \bigr)
\end{equation}

In {\bf Step 1}, we perform a preliminary shrinking to facilitate a shrinking to infinitesimal size by the map $H$ from formula \eqref{Eq:shrinking-H}.  By a homotopy, we will replace the intervals $L_i$ by their cores $L_i^\circ$, where we now define $L^\circ$ as the composition of the map $t \mapsto t/(2m)$ followed by $L$.  That is, we generalize Step 1 from $m=2$ to arbitrary $m$ by replacing $1/4$ by $1/(2m)$.  
This modification is not strictly necessary, but it has the feature that 
for any $[(L_1, \dots, L_m), \sigma]$ which maps to a corolla cactus, 
the $L_i^\circ$ are pairwise disjoint.  
(If instead we used $1/4$ for all $m$, the location of the resulting aligned maps would be corrected in Step 5.)
We achieve this step by a straight-line homotopy in $\Ov(m)$ from the $L_i$ to the $L_i^\circ$, and we end at the map 
\begin{equation}
\bigl( [(L_1,\dots, L_m),\sigma] , (f_1, \dots, f_m) \bigr) \longmapsto
(\quot \circ ev_n) \bigl( L_{\sigma(1)}^\circ \cdot f_{\sigma(1)} \circ \dots \circ
L_{\sigma(m)}^\circ \cdot f_{\sigma(m)} \bigr).
\end{equation}

In {\bf Step 2}, we arrange for knots and hence the aligned maps to be scaled by the same factor in all $d$ directions.  We perform a homotopy that replaces each factor of $L^\circ_{\sigma(i)} \cdot f_{\sigma(i)}$ above by a smooth embedding  satisfying 
\[
\bx \mapsto \left\{
\begin{array}{ll}
(L_{\sigma(i)}^\circ \x  S_{\sigma(i)}^\circ \dots \x S_{\sigma(i)}^\circ) \circ f_{\sigma(i)} \circ (L_{\sigma(i)}^\circ \x \mathrm{id}_{D^{d-1}})^{-1}(\bx) &  \text{ if } x_1 \in L_{\sigma(i)}^\circ(\I) \\
(L_{\sigma(i)}^\circ \x \mathrm{id}_{D^{d-1}}) \circ f_{\sigma(i)} \circ (L_{\sigma(i)}^\circ \x \mathrm{id}_{D^{d-1}})^{-1}(\bx) & \text{ if } x_1 \notin L_{\sigma(i)}(\I)
\end{array}
\right.
\]
where $S_{\sigma(i)}^\circ$ is the linear map that scales by the same factor as $L_{\sigma(i)}^\circ$ but fixes 0.  We denote this result $L_{\sigma(i)}^\circ \, \widehat{\cdot} \, f_{\sigma(i)}$.  This homotopy can be achieved by straight-line homotopies from $L_{\sigma(i)}^\circ \x \mathrm{id}_{D^{d-1}}$ to a smooth map with the same behavior for $\bx$ with $x_1 \in \I$ but which agrees with $L_{\sigma(i)}^\circ \x  S_{\sigma(i)}^\circ \dots \x S_{\sigma(i)}^\circ$ 
for $\bx$ with $x_1 \notin [-2m, 2m]$. 
Its endpoint is the map
\begin{equation}
\bigl( [(L_1,\dots, L_m),\sigma] , (f_1, \dots, f_m) \bigr) \longmapsto
(\quot\circ ev_n) \bigl(
L_{\sigma(1)}^\circ \, \widehat{\cdot} \, f_{\sigma(1)} \circ \dots \circ
L_{\sigma(m)}^\circ \, \widehat{\cdot} \, f_{\sigma(m)}
\bigr).
\end{equation}

In {\bf Step 3}, we shrink each aligned map to infinitesimal size.  
We use the map $H$ defined by the formula \eqref{Eq:shrinking-H} but with domain and codomain restricted so that it is a smooth self-embedding of $\R \x D^{d-1}$.  
Each $f_{\sigma(i)}$ incurs $i$ applications of $H$.  This ensures that configurations at higher lobes in the cactus appear at a smaller scale.  On the other hand, configurations at the same height appear at the same scale infinitely far apart, even though one will incur more applications of $H$ than the other. 
We perform the homotopy 
\begin{align*}  
\bigl( [(L_1,\dots, L_m),\sigma] , \ (f_1, \dots, f_m), \ t \bigr) \longmapsto \\
(\quot \circ ev_n) 
\Bigl(
\bigl(L_{\sigma(1)}^\circ \, \widehat{\cdot} \ (H(-,t) \circ f_{\sigma(1)}) \bigr)  \ \circ \ \dots \ \circ \
\bigl(L_{\sigma(m)}^\circ \, \widehat{\cdot} \ (H(-,t) \circ f_{\sigma(m)}) \bigr)
\Bigr)
\end{align*}
for $t\in [0,1)$ and extend to $t=1$ by taking the limit.  We end at the map 
\begin{align}  
\begin{split}
\bigl( [(L_1,\dots, L_m),\sigma] , \ (f_1, \dots, f_m) \bigr) \longmapsto \\
\lim_{t\to 1} \ 
(\quot \circ ev_n) 
\Bigl(
\bigl(L_{\sigma(1)}^\circ \, \widehat{\cdot} \ (H(-,t) \circ f_{\sigma(1)}) \bigr)  \ \circ \ \dots \ \circ \
\bigl(L_{\sigma(m)}^\circ \, \widehat{\cdot} \ (H(-,t) \circ f_{\sigma(m)}) \bigr)
\Bigr).
\end{split}
\end{align}

In {\bf Step 4}, we correct the frames by performing a homotopy in $GL(d)$ from $GS(A_1 \dots A_m)$ to the product $GS(A_1) \cdots GS(A_m)$, where $GS$ is the Gram--Schmidt map and where $A_i = Df_{\sigma(i)}( (L^\circ_{\sigma(i)})^{-1}(t), 0^{p-1})$.  
This homotopy is a map $\Ov^1(m) \x (\K^{fr})^m \x \Delta^n \x[0,1] \to \tC_n\la\R^d\ra \x GL(d)^n$ built out of the straight-line homotopy from $GS$ to the identity map.

In {\bf Step 5}, we complete the homotopy via reparametrizations of $\I$.  Let $F$ denote the composition in \eqref{Eq:dyer-lashof-square} through the lower-left corner, and let $G$ denote the map obtained at the end of Step 4.  Let $\phi, \phi' \in \tAM_n^{fr}$ be the images of a fixed element $([\bL, \sigma], f_1, \dots, f_m)$ under the maps $F$ and $G$ respectively.  

Fix $\bt \in \Delta^n$.
As in Definition \ref{D:action}, we can describe $\phi(\bt)$ by the formula 
\begin{equation}
\label{Eq:phi(t)}
\phi(\bt)=\bigcirc_{T} \left(
\mathbf{e}^{|b|}, 
\phi^1_{S_1}\left(\rho_1 \left(\bt^1\right)\right),\dots,
\phi^m_{S_m}\left(\rho_m \left(\bt^m\right)\right)
\right)
\end{equation}
where now $\phi^i = \quot \circ ev_n (f_i)$ and thus $\phi^i_{S_i}  = \quot \circ ev_{|S_i|-1} (f_i)$.  Recall that the $\bt^i$ are roughly sublists of $\bt$, but they may also include coordinates of local roots determined by $[\bL, \sigma]$ and the map $p_m:\Ov^1(m) \to \Cact^1(m)$.  As in Section \ref{S:lobe-param}, the symbols $\rho_1, \dots, \rho_m$ stand for both maps $\I \to \I$ and the induced maps $\Delta^n \to \Delta^n$ on non-decreasing $n$-tuples in $\I$.

The configuration $\phi'(\bt)$ is obtained via operadic insertion, given by the same tree $T$, of framed configurations in the images of $(\quot \circ ev_{|S_i|-1})(f_i \circ (\rho_i' \x \mathrm{id}_{D^{d-1}}))$, for $i\in \{1,\dots, m\}$, where $\rho_i': \R \to \R$ is a nondecreasing piecewise-smooth surjection that is the identity outside $\I$.
This fact is guaranteed at the level of configuration points by Steps 2 and 3, while it is guaranteed at the level of frames by Step 4.
Hence 
\begin{equation}
\label{Eq:phi'(t)}
\phi'(\bt)=\bigcirc_{T} \left(
\mathbf{e}^{|b|}, 
\phi^1_{S_1}\left(\rho_1' \left(\bt^1\right)\right),\dots,
\phi^m_{S_m}\left(\rho_m' \left(\bt^m\right)\right)
\right)
\end{equation}
where by abuse of notation,  $\rho_1', \dots, \rho_m'$ above denote the maps $\Delta^n \to \Delta^n$ induced by restricting domain and range to $\I$ and considering the effect on the compactified space of $n$-point configurations.
Note that both the $\rho_i$ in \eqref{Eq:phi(t)} and the $\rho_i'$ in \eqref{Eq:phi'(t)} depend continuously on $[\bL, \sigma] \in \Ov^1(m)$.

For $m=2$, we were able to arrange the homotopy so that only one overall reparametrization between maps $\rho$ and $\rho'$ was needed.  For $m>2$, the discrepancy between $G$ and $F$ can become more complicated, so we instead use homotopies between $\rho_i'$ and $\rho_i$ for each $i$, viewing all of them as maps $\I \to \I$.
Because $G$, and thus $\phi'$, is obtained as a limit of evaluation maps, each $\rho_i'$ is independent of $\bt$.  
Since $G$ does not depend on $f_1, \dots, f_m$, the same is true of $\rho_i'$.
Both $\rho_i$ and $\rho_i'$ depend continuously on $[\bL, \sigma]$.  
Finally,
the space $\Map(\I, \d \I)$ of continuous maps $\I \to \I$ which fix $\d \I$ pointwise is contractible, 
by the same elementary proof that $\Homeo^+(\I)$ is.
Thus we can find a homotopy 
$\Ov^1(m) \x [0,1] \to \Map(\I, \d I)$ from $[\bL, \sigma] \mapsto \rho_i'([\bL, \sigma])$ to $[\bL, \sigma] \mapsto \rho_i([\bL, \sigma])$ for each $i\in \{1,\dots,m\}$.  
(One could replace $\Map(\I, \d \I)$ by the subspace of non-decreasing such maps.)
Implementing these $m$ homotopies yields the necessary homotopy from $G$ to $F$.

Having completed the homotopy of maps $\Ov^1(m) \x (\K^{fr})^m \to \tAM_n^{fr}$, we just have to check the $\SS_m$-invariance throughout each of the four steps.  For this first three steps, the formulas are manifestly $\SS_m$-invariant.  In Step 4, the $\SS_m$-invariance of the required reparametrizations follows from that of the first three steps and of the action of $\PCact$ on $\tAM_n^{fr}$.
\end{proof}

\begin{corollary}
\label{C:dyer-lashof}
The Dyer--Lashof operations on $\K^{fr}$ from the $\CC_2$-action and on $\tAM_n^{fr}$ from the $\PCact$-action are compatible via the evaluation map.
\end{corollary}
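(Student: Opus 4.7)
The plan is to deduce the corollary from Theorem \ref{T:dyer-lashof-compatible-space-level} by invoking the standard construction of Dyer--Lashof operations from an $E_2$-action, taking $m=p$ prime. Recall that for an $E_2$-algebra $Y$ and a prime $p$, the Dyer--Lashof operation $Q^s$ on $H_*(Y;\mathbb{F}_p)$ is obtained as follows: the $\SS_p$-equivariance of the action map $\CC_2(p)\times Y^p\to Y$ (with the diagonal left action on $Y^p$) yields a map out of the Borel construction $E\SS_p\times_{\SS_p}(\CC_2(p)\times Y^p)$; pushing forward a class $e_j\otimes x^{\otimes p}$, where $e_j$ is a standard generator of $H_j(\CC_2(p)/\SS_p;\mathbb{F}_p)\cong H_j(B\SS_p;\mathbb{F}_p)$ and $j$ is determined by $s$ and $|x|$, gives $Q^s(x)$. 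An entirely parallel construction applied to the $\PCact(p)$-action produces Dyer--Lashof operations $\widetilde{Q}^s$ on $H_*(\tAM_n^{fr};\mathbb{F}_p)$.

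First I would apply Theorem \ref{T:dyer-lashof-compatible-space-level} with $m=p$ and combine its conclusion with the chain of $\SS_p$-equivariant homotopy equivalences in \eqref{eq:equivalences-of-operad-spaces} to obtain an $\SS_p$-equivariant homotopy-commutative square comparing the $\CC_2(p)$-action on $\K^{fr}$ with the $\PCact(p)$-action on $\tAM_n^{fr}$, intertwined by the vertical map $\quot\circ ev_n$. Then I would pass to the Borel construction, where the $\SS_p$-equivariant homotopy descends to an ordinary homotopy of maps out of $E\SS_p\times_{\SS_p}(\CC_2(p)\times(\K^{fr})^p)$, and apply $H_*(-;\mathbb{F}_p)$ to obtain a commutative square of graded $\mathbb{F}_p$-vector spaces.

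Second, I would verify that under the chain of $\SS_p$-equivariant equivalences in \eqref{eq:equivalences-of-operad-spaces}, the standard generator $e_j\in H_j(\CC_2(p)/\SS_p;\mathbb{F}_p)$ corresponds to the analogous class in $H_j(\PCact(p)/\SS_p;\mathbb{F}_p)$ used to define $\widetilde{Q}^s$. This is immediate: each map in \eqref{eq:equivalences-of-operad-spaces} is an $\SS_p$-equivariant homotopy equivalence, hence induces an isomorphism on the $\mathbb{F}_p$-homology of the $\SS_p$-orbit spaces that respects the canonical $K(B_p,1)$-structure discussed in Remark \ref{R:eqv-of-cacti-as-K(B,1)-spaces}. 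Evaluating the commutative homology square on $e_j\otimes x^{\otimes p}$ therefore yields
\begin{equation*}
(\quot\circ ev_n)_*\bigl(Q^s(x)\bigr)=\widetilde{Q}^s\bigl((\quot\circ ev_n)_*(x)\bigr)
\end{equation*}
for every $x\in H_*(\K^{fr};\mathbb{F}_p)$ and every admissible $s$, as asserted.

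The only nontrivial point is the $\SS_p$-equivariance of the homotopy produced in Theorem \ref{T:dyer-lashof-compatible-space-level}, which is precisely what was checked in the final paragraph of its proof; the rest is the standard translation from an $E_2$-action compatible up to $\SS_p$-equivariant homotopy to the compatibility of Dyer--Lashof operations in $\mathbb{F}_p$-homology.
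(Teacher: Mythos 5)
Your overall strategy is the same as the paper's: apply Theorem \ref{T:dyer-lashof-compatible-space-level} with $m=p$, descend to the $\SS_p$-quotient (your Borel construction $E\SS_p\times_{\SS_p}(\CC_2(p)\times(\K^{fr})^p)$ is harmless here, since $\SS_p$ acts freely on $\CC_2(p)$ and on $\PCact(p)$, so it is equivalent to the strict quotient $\CC_2(p)\times_{\SS_p}(\K^{fr})^p$ that the paper uses), take mod-$p$ homology, and push forward the class $e_j\otimes x^{\otimes p}$, using that the $\SS_p$-equivariant equivalences of \eqref{eq:equivalences-of-operad-spaces} identify the relevant classes on the two sides. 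That much matches the paper's proof.

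However, your description of the operations being compared contains a genuine error. The space $\CC_2(p)/\SS_p$ is a $K(B_p,1)$ for the braid group $B_p$, not a model of $B\SS_p$, and $H_j(\CC_2(p)/\SS_p;\mathbb{F}_p)\not\cong H_j(B\SS_p;\mathbb{F}_p)$ in general: already for $p=2$ one has $\CC_2(2)/\SS_2\simeq S^1$, whose mod-$2$ homology vanishes above degree $1$, whereas $H_*(B\SS_2;\mathbb{F}_2)$ is nonzero in every degree. Consequently an $E_2$-algebra does \emph{not} carry the full family of operations $Q^s$ for ``every admissible $s$''; that requires an $E_\infty$ (or at least higher $E_n$) structure. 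What a $\CC_2$-algebra has, and what the corollary asserts compatibility for, are only the bottom operations $\xi_1$ and (for odd $p$) $\zeta_1$, arising from the classes $e_{p-1}$ and $e_{p-2}$ in $H_*(\CC_2(p)/\SS_p;\pm\Z/p)$, where the coefficients are twisted by the sign representation composed with $B_p\to\SS_p$ (a point your untwisted $\mathbb{F}_p$ formulation also glosses over, though for $p=2$ or even-degree classes it is invisible). If you restrict your argument to these classes in the braid-group homology with the appropriate (possibly twisted) coefficients, the rest of your reasoning goes through and coincides with the paper's proof; as written, though, the claim about $B\SS_p$ and arbitrary admissible $s$ is not correct for an $E_2$-action.
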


\begin{proof}
In the diagram \eqref{Eq:compatibility-diagram}, the horizontal maps are $\SS_m$-invariant, and the vertical maps on the left-hand side are $\SS_m$-equivariant.  After taking quotients by $\SS_m$ on the left-hand side, 
the resulting rectangle still commutes up to homotopy by Theorem \ref{T:dyer-lashof-compatible-space-level}, 
yielding the following homotopy commutative square:
\begin{equation}
\label{Eq:dyer-lashof-square-2}
\xymatrix@R1.5pc{
\CC_2(m) \x_{\SS_m} \left(\K^{fr}\right)^m   \ar[r]^-{A_m} \ar_-{p_m \x (\quot \circ ev_n)^m}[d]  
& \K^{fr} \ar^-{\quot \circ ev_n}[d] \\
\PCact(m) \x_{\SS_m} \left(\tAM_n^{fr}\right)^m \ar[r]^-{\alpha_m}  
& \tAM_n^{fr} 
}
\end{equation}
Although we do not have an operad map between $\CC_2$ and $\PCact$, the Dyer--Lashof operations are defined by applying $H_*(-;\,\Z/p)$ to a row of square \eqref{Eq:dyer-lashof-square-2} with $m=p$.  
See work of F.~Cohen \cite[III.5, Definition 5.6]{FCohen} or Turchin \cite{Turchin:DLC} for more details.
The commutativity of this square thus gives the desired result.
\end{proof}

The proof of Theorem \ref{T:A} is now complete.

\begin{remark}
Theorem \ref{T:A} and its proof apply equally well when $d=1$ or $2$.  The spaces of framed long knots in $\R^1$ and $\R^2$ are contractible, so these settings are less interesting than those where $d\geq 3$.
\end{remark}

\section{Applications}
\label{S:applications}

We will now compute the action of cacti on certain classes in the Taylor tower and prove Theorem \ref{T:C}, thus recovering a result of Sakai \cite{Sakai:Nontrivalent}.  He used configuration space integrals to show the nontriviality of the Browder bracket $[e,f]$ of two specific cycles $e$ and $f$, which we define in Section \ref{S:cycles-of-knots}, after reviewing the space of long knots modulo immersions.  We will instead show its nontriviality using its class in the homology spectral sequence for the space of knots, described in Section \ref{S:spectral-sequence}.  There we relate $e$ and $f$ to corresponding classes in the spectral sequence; our proof of Theorem \ref{T:C} follows a similar argument.  The nontriviality of a certain class in the spectral sequence was first established by Turchin \cite{Turchin:sur-l'homologie}.  In Section \ref{S:geom-vs-alg-bracket}, we prove Theorem \ref{T:C} by showing that $[e,f]$ gives rise to this class, using Theorem \ref{T:browder-compatible-space-level}.  That is, the latter result helps us identify the bracket of cycles of knots with a certain bracket of classes in the spectral sequence.

\subsection{Knots modulo immersions and cycles of framed knots}
\label{S:cycles-of-knots}

\subsubsection{The space of knots modulo immersions}
Let $d \geq 3$.  A convenient auxiliary object will be the space of long knots modulo immersions, i.e., $\overline{\K}_d := \operatorname{hofiber}(\K_d \to \Imm_d)$.  A point in $\overline{\K}_d $ is a knot $f \in \K_d$ together with a path $\gamma$ through immersions from $f$ to the standard long unknot.  The Hirsch--Smale theorem \cite{Hirsch:1959, Smale:1959} gives an equivalence $\Imm_d \simeq \Omega S^{d-1}$ by taking the unit tangent vector at each point of the knot.  The forgetful map $\K_d \to \Imm_d$ is nullhomotopic \cite[Proposition 5.17]{Sinha:Operads}, so there is an equivalence $\overline{\K}_d \xrightarrow{\simeq} \K_d \x \Omega^2 S^{d-1}$.  The map to $\Omega^2 S^{d-1}$ is given by pre-concatenating the path $\gamma$ with a path $\gamma_0$ induced by a nullhomotopy of the forgetful map, thus yielding a loop $\gamma_0 \cdot \gamma$ of immersions based at that unknot.  

There is an equivalence $\K_d^{fr} \xrightarrow{\simeq} \K_d \x \Omega SO(d-1)$.  For $d \geq 4$, in which case $\K_d$ is path-connected, it is due to the fact that $\K_d \to \Imm_d$ is nullhomotopic \cite[\S1]{Salvatore:Knots}.  The map to $\Omega SO(d-1)$ is given by using such a nullhomotopy to standardize the first vector in the frame and thus obtain element of $SO(d-1)$ at each point; equivalently, it is given by lifting the path induced by a nullhomotopy from $\Imm_d$ to the space $\Imm_d^{fr}$ of framed long immersions.  For $d=3$, the equivalence $\K_d^{fr} \xrightarrow{\simeq} \K_d \x \Omega SO(d-1)$ comes from the framing number \cite[Proposition 9]{Budney:Cubes}.

There is a map $h: \overline{\K}_d \to \K_d^{fr}$ corresponding to the map $\Omega^2 S^{d-1} \to \Omega SO(d-1)$ induced by the fibration sequence $SO(d-1) \to SO(d) \to S^{d-1}$.  Alternatively, $h$ is given by constructing from $(f, \gamma) \in \overline{\K}_d$ a based loop $\gamma_0 \cdot \gamma$ of immersions (as above), lifting it to a path of framed immersions, and taking the framing at the midpoint of the path.  We thus have a commutative diagram, where the right-hand horizontal maps are forgetful maps:
\begin{equation}
\label{eq:3-knot-spaces}
\xymatrix{
\overline{\K}_d \ar[r]^-h\ar[d]_-\simeq & \K^{fr}_d \ar[r]^-p\ar[d]_-\simeq & \K_d \ar@{=}[d] \\
\K_d \x \Omega^2 S^{d-1} \ar[r] & \K_d \x \Omega SO(d-1)  \ar[r] & \K_d \\
}
\end{equation}

\subsubsection{Cycles of framed long knots}
We now construct cycles of framed long knots.
We start by constructing a $(d-3)$-cycle $e$ in $\K_d^{fr}$ for odd $d\geq 3$.
Consider this part of the long exact sequence in homotopy groups of the fibration $SO(d) \to S^{d-1}$:
\[
\dots \to \pi_{d-1} S^{d-1} \to \pi_{d-2} SO(d-1) \to \pi_{d-2} SO(d) \to
\dots
\]
The first map is injective because post-composing by the map $\pi_{d-2} SO(d-1) \to \pi_{d-2} S^{d-2}$ is essentially multiplication by the Euler characteristic of $S^{d-1}$ \cite[\S1,B)]{Levine:1985}.  
Fix the standard orientation on $S^{d-1}$, and identify the image of the positive generator under $\pi_{d-1} S^{d-1} \to \pi_{d-2} SO(d-1)$ with an element in $\pi_{d-3} \Omega SO(d-1)$.  
This element is also given by the clutching function for the tangent bundle of $S^{d-1}$, as described by Sakai \cite[\S 2.2.2]{Sakai:Nontrivalent}.  
Via the equivalence $\K_d^{fr} \simeq \K_d \x \Omega SO(d-1)$, we obtain a class in $\K_d^{fr}$.  
Let $e$ denote its image under the Hurewicz map in $H_{d-3} \K_d^{fr}$.

For our second cycle, we only require $d\geq 3$.  We take a resolution of a singular long immersion $g:\R \to \R^d$ with two transverse double-points.  Let $t_1^*, \dots, t_4^*$ be the preimages of the double-points of $g$, where $-1< t_1^* < \dots < t_4^* < 1$.  We require that $g(t_1^*)=g(t_3^*)$ and $g(t_2^*)=g(t_4^*)$.  Define a map $S^{d-3} \x S^{d-3} \to \K_d$ where each factor of $S^{d-3}$ parametrizes a resolution of a double-point.  Each $S^{d-3}$ corresponds to the unit sphere in the orthogonal complement of the two tangent vectors at a double-point.  As two configuration points pass through a resolved double-point, one on each strand, the resulting $(d-1)$-parameter family defines a degree-1 map of $S^{d-1}$, which we may take to be smooth.  This class represents the first nontrivial element of the homotopy groups of $\K_d$ \cite{Budney:Family}, so it factors through the quotient $S^{d-3} \x S^{d-3} \to S^{2d-6}$.  For more details, see the work of various authors \cite[\S 2.2.1]{Sakai:Nontrivalent}, \cite[\S 2]{CCRL:AGT}, \cite[Figure 3]{Budney:Family}.  We map this cycle to a cycle $f$ in $\K_d^{fr}$ by fixing the constant loop at $I_{d-1} \in SO(d-1)$ and using the equivalence  $\K_d \x \Omega SO(d-1) \simeq  \K_d^{fr}$.  We have thus defined classes $e \in H_{d-3} \K_d^{fr}$ and $f \in H_{2d-6} \K_d^{fr}$.  

\subsection{The homology spectral sequence of the cosimplicial model}
\label{S:spectral-sequence}
Recall the cosimplicial spaces for the three spaces of embeddings $\overline{\K}_d$, $\K_d$, and $\K_d^{fr}$ from Section \ref{S:cosimplicial-models}.  We consider the induced homology spectral sequences, which arise from a general construction for cosimplicial spaces.

Any cosimplicial space $X^\bullet$ gives rise to a second quadrant spectral sequence via the double complex 
$C_*(X^\bullet)$, where the vertical differential is given by the boundary map $\d$ on chains, and the horizontal differential is given by $\delta=\sum_{i=0}^{n+1} (-1)^i d^i_*$.  
Thus $E^0_{-p,q} = C_q(X^p)$, and $E^1_{-p,q}= H_q(X^p)$.
This spectral sequence converges \cite[\S5.6, p.~142]{Weibel} to the homology of the (direct product) total complex $\Tot C_*(X^\bullet)$, where $\Tot C_*(X^\bullet)_n = \prod_{q-p=n} C_q(X^p)$ and the differential on each factor is given by $\d + (-1)^p \delta$.  

The spectral sequence for $\K_d^{fr}$ is of most interest to us, but we will use those for $\overline{\K}_d$ and $\K_d$ as auxiliary objects.  So we now briefly describe and fix notation for all three.  We have the following for all $r \geq 1$:
\begin{itemize}
\item
In the spectral sequence for $\barK_d$, each term $\barE^r_{-p,q}$ is a subquotient of $H_q (C_p (\R^d))$.
\item
In the spectral sequence for $\K_d^{fr}$, each term $(E^r_{-p,q})^{fr}$ is a subquotient of $H_q (C_p (\R^d) \x O(d)^p)$.  
\item
In the spectral sequence for  $\K_d$, each term $E^r_{-p,q}$ is a subquotient of $H_q (C_p (\R^d) \x (S^{d-1})^p)$.  
\end{itemize}
More details about these spectral sequences can be found in various references \cite[\S7]{Sinha:Top}, \cite[\S2, \S7]{Sinha:Operads}, \cite[\S4]{Salvatore:Knots}, \cite[\S2]{Pelatt-Sinha}, \cite[\S9.6.4, \S10.4.4]{Munson-Volic:Book}.

\subsubsection{Evaluation and the spectral sequence}
Evaluating a framed knot on at most $p$ points 
induces a map
\begin{equation}
\label{eq:map-to-total-complex}
C_k(\K_d^{fr})
\to 
\prod_{i=0}^p C_{i+k} \left( \tC_i^{fr} \la \R^d \ra \right)
\end{equation}
from chains on $\K_d^{fr}$ to the total complex of chains on the cosimplicial model, 
by applying
the adjunctions
\[
\Delta^k \to \Map \left(\Delta^i, \tC^{fr}_i \la \R^d \ra\right)
\quad \rightsquigarrow \quad
\Delta^k \x \Delta^i \to \tC_i^{fr} \la \R^d \ra.
\]
The map \eqref{eq:map-to-total-complex} is a chain map because in the total complex, the boundary faces of $\Delta^i$ appearing in the vertical boundary map $\d$ are canceled by the images of $\Delta^{i-1}$ under the horizontal boundary map $\delta$.  
The map $C_*(\K_d^{fr}) \to \Tot C_*(X^\bullet)$ can also be described as the composition of two maps.  The first is the map on chains induced by an evaluation map $\K_d^{fr} \to \Tot^p X^\bullet$ to the (honest) partial totalization of the cosimplicial model $X^\bullet = \tC^{fr}_\bullet \la \R^d \ra$ for $\K_d^{fr}$, while the second is the map $C_*(\Tot^p X^\bullet) \to  \Tot_p C_*(X^\bullet)$ described by  Bousfield \cite[Lemma 2.2]{Bousfield:1987}.
The map \eqref{eq:map-to-total-complex} then induces a map 
\begin{equation}
\label{eq:map-to-E-infinity}
H_k \K_d^{fr} \to \prod_{i=0}^p (E^\infty_{-i, i+k})^{fr}
\end{equation}
 since the latter term is part of the associated graded group of the homology of $\Tot C_*(X^\bullet)$.  
Unlike elements of $\Tot X^\bullet$, elements of $\tAM_p^{fr}$ may have different restrictions to different faces of the same dimension.  However, the map \eqref{eq:map-to-E-infinity} factors through the map $(ev_p)_*: H_k \K_d^{fr}$ in $H_k \tAM_p^{fr}$, so we may study the former map via the map induced by $ev_p$ on chains.
This also means that we can use the action of cacti on that image to understand its image in $(E^\infty_{*,*})^{fr}$.  

On cosimplicial models, the maps $h: \overline{\K}_d \to \K_d^{fr}$ and $p:\K_d^{fr} \to \K_d$ in diagram \eqref{eq:3-knot-spaces} correspond to maps 
\[
C_p \la \I^d, \d \ra \incl C_p \la \I^d, \d \ra \x O(d)^p \to C_p \la \I^d, \d \ra \x (S^{d-1})^p
\]
where the first map is given by fixing  $(I_d, \dots, I_d) \in O(d)^p$, while the second is induced by the fibration $O(d) \to S^{d-1}$.  These induce maps that we denote by the same letters as the corresponding maps on embedding spaces, 
and there is a commutative diagram with the vertical arrows induced by evaluation maps similar to \eqref{eq:map-to-E-infinity}:
 \begin{equation}
 \label{eq:evaluation-h-p}
 \xymatrix{
 H_*\overline{\K}_d \ar[r]^-h\ar[d] &  H_*\K^{fr}_d \ar[r]^-p\ar[d] &  H_*\K_d \ar[d] \\
\barE^\infty_{*,*} \ar[r]^-{h_*} & (E^\infty_{*,*})^{fr} \ar[r]^-{p_*} & E^\infty_{*,*}
 }
 \end{equation}
The commutativity of the first square on the factors of $O(d)$ comes from the fact that the composition 
$\Omega^2 S^{d-1} \to \Omega SO(d-1) \to \Omega SO(d)$ is nullhomotopic.

\subsubsection{Bracket expressions in the spectral sequence}

Crucial to our calculations is a geometric interpretation of classes in the spectral sequence.  They will come from the spectral sequence for $\overline{\K_d}$, so it suffices to understand $H_q(C_p (\R^d))$.  Algebraically, this group is spanned by products of brackets of elements $x_1, \dots, x_p$ in which no $x_i$ is repeated \cite{FCohen, Sinha:Disks}.  Geometrically, a product $A \cdot B$ is represented by the juxtaposition in disjoint balls in $\R^d$ of representative chains for the expressions $A$ and $B$.  A bracket $[x_i, x_j]$ is represented by the $i$-th configuration point tracing a $(d-1)$-sphere around the $j$-th point (or vice-versa).  More generally, a bracket $[A,B]$ of expressions $A$ and $B$ is represented by taking representative chains for $A$ and $B$ to be contained in small disjoint balls in $\R^d$, and then having $A$ orbit $B$ in a $(d-1)$-sphere.  
In the compactification $C_p \la \R^d \ra$, instead of putting a configuration in a small ball, one can insert it into a configuration point $x_i$; that is, one can represent iterated brackets via the insertion maps $\circ_i$.
This gives a geometric interpretation of any class in $\overline{E}^r_{-p,q}$ for any $r \geq 1$.

Specifically, we start with the bracket expressions $\alpha:=[x_1, x_2]$ and $\beta:=[x_1,x_3][x_2,x_4]$,  which represent classes in $\barE^*_{*,*}$.  We also consider their images under $h_*$ in $(E^*_{*,*})^{fr}$, which we sometimes denote by the same bracket expressions, since $h_*$ is induced by inclusions $C_p \la \R^d \ra \incl C_p \la \R^d \ra \x O(d)^p$.
Both $\alpha$ and $\beta$ can be defined at $\barE^1_{*,*}$, and they lie in degrees low enough for one to verify that they support no nonzero differentials and hence survive to $\barE^\infty_{*,*}$, as indicated by Salvatore \cite[Proof of Theorem 2]{Salvatore:Knots}.  
More important to us than that verification is the following geometric interpretation of these bracket expressions.
Salvatore's work mentions an analogue of it for knots modulo immersions, which by diagram \eqref{eq:evaluation-h-p} implies this result for framed knots.  Nonetheless, we prove it in detail because our detection of Sakai's Browder bracket class will be proven similarly.

\begin{proposition} 
\label{P:eval-brackets}
Let $d \geq 3$.
\begin{enumerate}
\item[(a)] If $d$ is odd, the image in $({E}^\infty_{-2,d-1})^{fr}$ of the class $e$ defined in Section \ref{S:cycles-of-knots} 
is represented by  $h_*\alpha$.
\item[(b)] The image in $({E}^\infty_{-4,2(d-1)})^{fr}$ of the class $f$ defined in Section \ref{S:cycles-of-knots} 
is represented by $h_*\beta$.
\end{enumerate}
\end{proposition}

\begin{proof}
A priori, evaluating either $k$-cycle (where $k=d-3$ or $2d-6$) on at most $p$ points gives an element of
$\prod_{i=0}^p C_{i+k} \left( \tC_i \la \R^d \ra \x O(d)^i \right)$ that is a cycle in the total complex, 
rather than a cycle in $\tC_p \la \R^d \ra \x O(d)^p$.  
We will find a homotopy of the map 
\[
\Map (\Delta^k, \K_d^{fr}) \to 
\prod_{i=0}^p \Map (\Delta^i \x \Delta^k, \tC_i \la \R^d \ra \x O(d)^i)
\]
that stays within the subspace determined by compatibility with the coface maps.  
It will modify the resulting maps from $\Delta^p \x \Delta^k$
near the boundary 
to yield a $(p+k)$-cycle in $\tC_p \la \R^d \ra \x O(d)^p$.  That is, our argument will be to essentially 
find lifts to the $E^1$-page of the images of $e$ and $f$ under the map \eqref{eq:map-to-E-infinity}.

{\bf Proof of part (a):}  Here $p=2$.  We start by unwinding the data from evaluation.
The chain $ev_2(e) \in C_{d-3}(\widetilde{AM}_2^{fr})$ induces a map $\Delta^{d-3} \x \Delta^2 \to \tC_2\la \R^d \ra \x O(d)^2$.
We get a map $\Delta^{d-3} \x \Delta^2 \to S^{d-1} \x O(d)^2$ by taking the direction between the two configuration points, and its projection to $S^{d-1}$ is constant at the at the basepoint $(1,0,\dots,0)$.  
In the factors of $O(d)$, this map is given by evaluating at $(t_1, t_2) \in \Delta^2$ the image in $C_{d-3}(\Omega SO(d))$ of $e \in C_{d-3}(\Omega SO(d-1))$.
In particular, for any fixed $s \in \Delta^{d-3}$, it is given on each edge of $\Delta^2$ by the evaluation $t \mapsto e(s)(t)$, followed by one of the three maps $x \mapsto (\ast, x)$, $x \mapsto (x, \ast)$, and $x \mapsto (x,x)$.

Since $e$ is (the image under the Hurewicz map of) an element in $\ker (\pi_{d-3} \Omega SO(d-1) \to \pi_{d-3} \Omega SO(d))$, there is a nullhomotopy in $O(d)$ of these maps from each edge of $\Delta^2$.
Performing this nullhomotopy on a collar going from each edge of $\Delta^2$ to the corresponding edge of a larger 2-simplex $D = \Delta^2 \cup (\I \x \d \Delta^2)$ (and applying a reparametrization $\Delta^2 \xrightarrow{\cong} D$) yields a map $\Delta^{d-3} \x \Delta^2 \to O(d)^2$ that is constant on $\Delta^{d-3} \x \d \Delta^2$.
To ensure compatibility with the coface maps throughout this homotopy, the map to $S^{d-1}$ along the collar of the edge where $t_1=t_2$ 
must be given by the first vector of either frame in $O(d)$.  Over $\Delta^2$, we thus obtain for each $s \in \Delta^{d-3}$ an element of $\Omega^2 S^{d-1} = \operatorname{hofiber}(\Omega SO(d-1) \to \Omega SO(d))$.  Allowing $s \in \Delta^{d-3}$ to vary gives precisely a lift of $e \in \ker (\pi_{d-3} \Omega SO(d-1) \to \pi_{d-3} \Omega SO(d))$ which by the definition of $e$ is a generator of $\pi_{d-1}S^{d-1}$.  

In summary, we have performed a homotopy of the map $ev_2(e):\Delta^{d-3} \to \widetilde{AM}_2^{fr}$ such that the final endpoint is a family of aligned maps that send all of $\d \Delta^2$ to the basepoint.  The resulting homology class in $H_{d-1}( \tC_2^{fr}\la \R^d \ra)$ corresponds to the fundamental class of $S^{d-1} \simeq \tC_2\la \R^d \ra$ and hence to  $[x_1, x_2]$, i.e., to $h_*\alpha$.


{\bf Proof of part (b):} Here $p=4$.  By construction, $f$ is represented by a $(2d-6)$-parameter family that is constant at the standard long unknot outside small neighborhoods $U_1, \dots, U_4$ of the preimages in $t_1^*, \dots, t_4^*\in \I$ of the double-points that are resolved.  

For $1 \leq i < j \leq 4$, let $V_{i,j}$ be the set of $(t_1, \dots, t_4) \in \Delta^4$ such that either ${U_i}$ or ${U_j}$ contains none of $t_1, \dots, t_4$.  We need to consider only $V_{1,3}$ and $V_{2,4}$.  On each of these $V_{i,j}$, we can perform a homotopy of the output configurations in $\tC_4^{fr}\la \R^d \ra$ resulting from a nullhomotopy of the resolution of the double-point where $g(t_i^*)=g(t_j^*)$.  On $V_{1,3} \cap V_{2,4}$, we may perform these homotopies simultaneously.  
Since any point $(t_1, \dots, t_4) \in \d\Delta^4$ has at most three distinct coordinates in the interior of $\I$, we have $\d \Delta^4 \subset V_{1,3} \cup V_{2,4}$.  We thus implement a homotopy of the restriction of $ev_4(f)$ to $\Delta^{2d-6} \x \d \Delta^4$.  In more detail, this homotopy gives a homotopy of an extension of $ev_4(f)$ to a larger domain $\Delta^{2d-6} \x D$, where $D=\Delta^4 \cup (\I \x \d \Delta^4)$, which we pre-compose with a reparametrization $\Delta^4 \xrightarrow{\cong} D$, to obtain a homotopy of maps $\Delta^{2d-6} \x \Delta^4 \to \tC_4^{fr} \la \R^d \ra$ compatible with the coface maps.

Evaluation of $f$ on $4$ points gives a chain in $\tC_4^{fr}\la \R^d \ra$ of dimension $2(d-3)+4 = 2d-2$.  
At the final endpoint of the homotopy, there is at most one resolution at each point in $\d \Delta^4$, specified by $d-3$ parameters.  That is, at each point, the family factors through one of the two projections $S^{d-3} \x S^{d-3} \to S^{d-3}$.
Hence evaluation over all of $\d \Delta^4$ is given by $(d-3)+3=d$ parameters.  
Thus we ultimately obtain a class in $H_{2d-2}(\tC_4^{fr}\la \R^d \ra, \, A)$ where $A$ is a subspace with no homology above degree $d$.

If $d\geq 4$, then $(2d-2) - d \geq 2$, and hence $H_{2d-2}(\tC_4^{fr}\la \R^d \ra, \, A) \cong H_{2d-2}(\tC_4^{fr}\la \R^d \ra)$.
If $d=3$, we can represent $f \in H_0\K_3^{fr}$ by an alternating sum of four knots, where one is a trefoil and the other three are unknots.  For the trefoil summand, we can perform a homotopy on each of the two $V_{i,j}$ that implements the effect of a crossing change on output configurations in $\tC_4^{fr}\la \R^d \ra$.  Changing either or both crossings yields an unknot, so by a further homotopy, we can achieve a constant map on $\d \Delta^4$.  The three unknot summands yield aligned maps that are homotopic to maps constant on all of $\Delta^4$.  All of these homotopies remain within the space of aligned maps because they are induced by embeddings (away from double-points where crossing changes occur).

In summary, for any $d\geq 3$, we have constructed a homotopy of $ev_4(f): \Delta^{2d-6} \to \tAM^{fr}_4$ such that the final endpoint is a family that is either constant on $\d \Delta^4$ or maps it to a subspace of codimension at least 2.  We thus obtain a class in $H_{2d-2}(\tC_4^{fr}\la \R^d \ra)$.  The class $f$ is constructed so that $x_1$ orbits $x_3$ in a degree-1 spherical trajectory, and so that $x_2$ similarly orbits $x_4$.  Thus the class in $H_{2d-2}(\tC_4^{fr}\la \R^d \ra)$ corresponds to $[x_1, x_3][x_2,x_4]$, i.e., to $h_*\beta$.
\end{proof}

\begin{remark}
\label{R:framed-unknot}
Here is another construction of the class $e \in H_{d-3} \K_d^{fr}$, which is similar to that of $f$ and further indicates its connection to the spherical class corresponding to the bracket expression $[x_1, x_2]$.  
We do not need this construction for our results, so we only sketch the details.

Take an immersion $\R \to \R^d$ with one double-point, and resolve the double-point in all the normal directions to obtain a family of embeddings parametrized by $S^{d-3}$.  
This family is homotopic to the constant family at the standard long knot.  By taking the unit tangent vector, we get from such a homotopy a $(d-3)$-parameter family of paths in $\Omega S^{d-1}$ from the element $\omega$ induced by the immersion with a double-point (or more precisely, a slight perturbation of it) to the constant based loop.  One can choose this family of paths so as to induce a degree-$1$ map of $S^{d-1}$.  If $d=3$, one can do so by using the two Reidemeister I moves, corresponding to the two embeddings parametrized by $S^0$.  For $d>3$, one can generalize this construction.  
We thus get an element of $H_{d-3}\overline{\K}_d$.  
By the choice of family of paths, this element corresponds to a generator of $\pi_{d-3}(\Omega^2 S^{d-1})$ (via a nullhomotopy of $\omega$, which is homotopic to the image of an embedding).  We then take its image under the map $\overline{\K}_d \to \K_d^{fr}$, which is thus $e$.
\end{remark}

\subsection{Compatibility of the knot and configuration bracket operations}
\label{S:geom-vs-alg-bracket}

The brackets appearing in expressions in the spectral sequence essentially come from the bracket operation on the homology of configuration spaces, while the Browder bracket we consider is an operation on the homology of the space of knots.  To show their compatibility, we use an avatar of the latter operation in the spectral sequence.  Namely, Turchin \cite[\S 2]{Turchin:NATO} identified $\barE^2_{*,*}$ with the Hochschild homology of the Poisson algebras operad and showed that as such, it has product and bracket operations.  
We write $\llbracket \cdot, \cdot \rrbracket$ for this bracket operation to distinguish it from the brackets on configuration points that generate the spectral sequence entries.
Since the classes $\alpha=[x_1,x_2]$ and $\beta=[x_1,x_3][x_2,x_4]$ survive to $\barE^2_{*,*}$, it makes sense to consider $\llbracket \alpha, \beta \rrbracket$.
The compatibility established in the following theorem will readily lead to the proof of Theorem \ref{T:C}, our last main result.

\begin{theorem}
\label{T:bracket-example} 
Let $d\geq 3$ be odd.  
Let $e \in H_{d-3} \K_d^{fr}$ and $f \in H_{2(d-3)} \K_d^{fr}$ be the cycles defined in Section \ref{S:cycles-of-knots}.
The Browder bracket $[e,f]$ and $h_*\llbracket \alpha, \beta \rrbracket \in (E^2_{-5, 3(d-1)})^{fr}$ have the same image in  $(E^\infty_{-5, 3(d-1)})^{fr}$.
%
\end{theorem}

\begin{proof}
The image of $[e,f]$ in $(E^\infty_{-5, 3(d-1)})^{fr}$ is determined by 
its image under $ev_5 : \K^{fr}_d \to \tAM_5^{fr}$.  
The evaluation map has the form $K \mapsto (ev_5(K): \Delta^5 \to \tC^{fr}_5 \la \R^d \ra)$.   
The class $(ev_5)_*[e,f]$ is a $(3d-8)$-cycle in $\tAM_5^{fr}$,
 and by an adjunction involving $\Delta^5$, it gives rise to a $(3d-3)$-chain in $\tC^{fr}_5 \la \R^d \ra$.  
Our first goal is to obtain a cycle from this chain, essentially lifting it from the $E^\infty$-page to the $E^1$-page.
Our remaining goal will then be to show that it agrees with $h_*\llbracket \alpha, \beta \rrbracket$.

To understand $(ev_5)_*[e,f]$, we use Theorem \ref{T:browder-compatible-space-level}, which implies that $(ev_5)_*[e,f]$ and $[(ev_5)_*(e), (ev_5)_*(f)]$ are homologous, where the bracket in the latter expression comes from the action of cacti.  
Thus it suffices to the image of the latter chain in $(E^\infty_{-5, 3(d-1)})^{fr}$.  Since $e$ and $f$ are parametrized by $S^{d-3}$ and $(S^{d-3})^2$ respectively, $[(ev_5)_*(e), (ev_5)_*(f)]$ corresponds to a map 
\begin{equation}
\label{eq:bracket-of-ev5}
(S^{d-3})^3 \x \Cact^1(2) \x \Delta^5 \to \tC^{fr}_5 \la \R^d \ra
\end{equation}
where $\Cact^1(2) \cong S^1$.  
As $C \in \Cact^1(2)$ varies, the local root of the higher of the two lobes moves, and this local root will essentially serve as a 6th point on the perimeter of $C$ along with the images of $t_1, \dots, t_5$.

As in the proof of Proposition \ref{P:eval-brackets}, we will show that up to homotopy of aligned maps, the restriction of the map \eqref{eq:bracket-of-ev5} to $(S^{d-3})^3 \x S^1 \x \d \Delta^5$ is either constant or lies in a subspace of  codimension at least 2.

First, let $X_1$ be the subspace of elements $(v_1, v_2, v_3; C; t_1, \dots, t_5) \in (S^{d-3})^3 \x \Cact^1(2) \x \Delta^5$ such that among the images of $t_1, \dots, t_5$ on $C$ and the local root of lobe 2, fewer than 2 distinct points lie in the interior of lobe 1.  The map \eqref{eq:bracket-of-ev5} is homotopic through aligned maps to one whose restriction to $X_1$ factors through the projection that kills the first factor of $S^{d-3}$; this is because, as in the proof of Proposition \ref{P:eval-brackets}, $e$ comes from $\ker(\Omega SO(d-1) \to \Omega SO(d))$, and with at most one configuration point, only the data in $O(d)$ is recorded.  

Next, we turn a subspace where lobe 2 of $C$ lacks the needed configuration points.  As before, let $U_1, \dots, U_4$ be small neighborhoods of the preimages of the two double-points resolved to obtain $f$ that are large enough to contain the support of the resolutions.  
Recall from Section \ref{S:lobe-param} that $\rho_i = \rho_i(C): \I \to \I$ is roughly a proper map that parametrizes the $i$-th lobe of  $C$ while being constant outside of the $i$-th lobe.  
Let $X_2$ be the subspace of elements $(v_1, v_2, v_3; C; t_1, \dots, t_5) \in (S^{d-3})^3 \x \Cact^1(2) \x \Delta^5$ where at least one $\rho_2(C)^{-1}(U_i)$ contains neither a preimage of the local root of lobe 1 of $C$ nor any of $t_1, \dots, t_5$.  
(Since $U_i$ is open, each such set contains either both or neither of the preimages of this local root.)  
There is a homotopy of the map \eqref{eq:bracket-of-ev5} through aligned maps to one whose restriction to $X_2$ factors through the projection that kills at least one of the second and third factors of $S^{d-3}$; this is because, as in the proof of Proposition \ref{P:eval-brackets}, one can implement a nullhomotopy of the resolution of the double-point to which the unoccupied interval $U_i$ corresponds.  

If $d=3$, then as before, three of the four components indexed by the second and third factors of $S^0$ are induced by unknots and hence yield aligned maps that are homotopic to constant maps.  The fourth is induced by a trefoil, and for that component, we can implement one or two crossing changes at each point where some $U_i$ is unoccupied.  These yield unknots, so through a homotopy of aligned maps, we obtain a map that is constant on $X_2$ when $d=3$.

Since we need 6 distinct points, including the local root of the higher lobe of $C$, for an element to lie outside $X_1 \cup X_2$, we have $(S^{d-3})^3 \x \Cact^1(2) \x \d \Delta^5 \subset X_1 \cup X_2$.  
The homotopies of aligned maps defined for $X_1$ and $X_2$ may be performed simultaneously.
The final endpoint is a map that is either constant on $(S^{d-3})^3 \x \Cact^1(2) \x \d \Delta^5$ (if $d=3$) or takes it into a subspace $A \subset \tC^{fr}_5 \la \R^d \ra$ of codimension $(d-3)+1>1$ (if $d >3$).  Thus we obtain a homology class in $H_{3d-3} (\tC^{fr}_5 \la \R^d \ra)$, using in the case where $d>3$ the fact that $H_{3d-3} (\tC^{fr}_5 \la \R^d \ra, \, A) \cong H_{3d-3} (\tC^{fr}_5 \la \R^d \ra)$.

It remains to check that this homology class corresponds to $h_* \llbracket \alpha, \beta \rrbracket$.  
Let $\mathcal{S}$ be the complement of $X_1 \cup X_2$ in $(S^{d-3})^3 \x \Cact^1(2) \x \d \Delta^5$.
It consists of multiple path components determined by the position of the local root of the higher lobe of the cactus $C$: either the local root of lobe 1 lies in one of $\rho_2^{-1}(U_1), \dots, \rho_2^{-1}(U_4)$, or the local root of lobe 2 lies in $\rho_1^{-1}((-1,1))$, in which case it lies either before or after the single $t_i$ that must lie on lobe 1.  See Figure \ref{F:cactus-bracket}.
We will show that these components correspond to bracket expressions.  

Letting a number $i$ stand for $x_i$, we write $h_*\alpha=[1,2]$ and $h_*\beta=[1,3]\cdot [2,4]$.  Turchin's bracket formula on the spectral sequence \cite[equations (2.1), (2.3), (2.5)]{Turchin:NATO} roughly says that $\llbracket \alpha, \beta \rrbracket$ is a signed sum of substituting $\alpha$ for each symbol in $\beta$ and vice-versa, increasingly labels as necessary to obtain distinct symbols.  Since $h_*$ is induced by inclusions $C_p \la \R^d \ra \incl C_p \la \R^d \ra \x O(d)^p$, the class $h_*\llbracket \alpha, \beta \rrbracket$ is given by the same symbols as $\llbracket \alpha, \beta \rrbracket$, and the precise formula yields 
\begin{align}
\begin{split}
\label{eq:bracket-formula}
h_*\llbracket \alpha, \beta \rrbracket = 
&- [[1,3]\cdot[2,4], 5] 
 + [1, [2,4]\cdot[3,5]] 
 - [[1,2],4]\cdot [3,5]
+ [1,4] \cdot [[2,3],5] \\
&- [1,[3,4]] \cdot [2,5]
+[1,3] \cdot [2,[4,5]].
\end{split}
\end{align}

Let  $B_1, \dots, B_6$ be the bracket expressions above, in the order given and without any minus signs.  We claim that each of the homology classes in $H_{3d-3}(\tC^{fr}_5\la \R^d \ra)$ indexed by the $B_i$ is parametrized by exactly one path component of $\mathcal{S}$, up to sign.
Indeed, having modified our aligned map on $X_1 \cup X_2$ and identified $H_* (\tC^{fr}_5 \la \R^d \ra, \, A)$ with $H_* (\tC^{fr}_5 \la \R^d \ra)$, we see that on any component of $\mathcal{S}$, each factor of $S^{d-3}$ together with two of the parameters $(s, t_1, \dots, t_5) \in S^1 \x \Delta^5$ parametrize a homology class $[j,k]$ in $\tC^{fr}_5\la \R^d \ra$.
The action of $\Cact^1(2)$ is defined so that configurations in the cycle parametrized by the higher lobe are inserted via some $\circ_i$ operation into configurations in the cycle parametrized by the lower lobe.
Thus each component of $\mathcal{S}$ parametrizes a class indexed by three bracket pairs, two of which are nested.  

\begin{figure}[h!]
\includegraphics[scale=0.9]{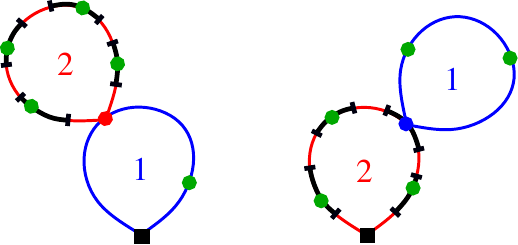}
\caption{Above are two elements of $\Cact^1(2) \x \Delta^5$.  The black intervals on lobe 2 correspond to $U_1, \dots, U_4$.  In either element, the two points on lobe 1, together with one factor of $S^{d-3}$, parametrize $\alpha=[1,2]$, while the four points on lobe 2, together with $(S^{d-3})^2$, parametrize $\beta=[1,3] \cdot [2,4]$.  
Hence the element on the left, together with $(S^{d-3})^3$, parametrizes a cycle where one substitutes $\beta$ for 1 in $\alpha$ and accordingly relabels 2 by 5. The result is $B_1=[[1,3]\cdot [2,4], 5]$, up to sign.  
Similarly, the element on the right, together with $(S^{d-3})^3$, parametrizes a cycle where one substitutes $\alpha$ for 3 in $\beta$, relabeling the inserted $[1,2]$ by $[3,4]$ and the original 4 by 5.  The result is $B_5=[1,[3,4]] \cdot [2,5]$, up to sign.}
\label{F:cactus-bracket}
\end{figure}

To fully understand these components, we consider the possible types of configurations of 6 points on the two types of cacti.
For example, there is a component $\mathcal{S}_1$ where the local root of lobe 2 lies in the interior of lobe 1
before $t_5$ (and sufficiently far from both $t_5$ and the global root). 
Similarly, there is a component $\mathcal{S}_5$ is where the local root of lobe 1 lies in $\rho_2^{-1}(U_3)$.  
Both are illustrated in Figure \ref{F:cactus-bracket}, where it is explained how $\mathcal{S}_1$ parametrizes $\pm B_1$ and $\mathcal{S}_5$ parametrizes $\pm B_5$.  After a similar analysis and an appropriate labeling for the remaining four components, we see that for each $i\in \{1,\dots, 6\}$, the component $\mathcal{S}_i$ parametrizes the cycle $B_i$, up to sign. 

Finally, to calculate the signs, we compare the parametrization of $\pm B_i$ by $\mathcal{S}_i \subset (S^{d-3})^3 \x S^1 \x \Delta^5$ to $(S^{d-1})^3$ to that by $(S^{d-1})^3$.  Since $d$ is odd, $d-3$ is even, these parametrization differ by the sign of the permutation of coordinates $(s, t_1, \dots, t_5) \in S^1 \x \Delta^5$ that groups them with the appropriate factors of $S^{d-3}$, where the order of these factors does not matter.  For example, for $B_1$ and $\mathcal{S}_1$, it is the odd permutation $(t_1, t_3, t_2, t_4, s, t_5)$; for $B_5$ and $\mathcal{S}_5$, it is the odd permutation $(t_1, s, t_3, t_4, t_2, t_5)$.  Thus $\mathcal{S}_1$ and $\mathcal{S}_5$ parametrize $-B_1$ and $-B_5$ respectively.  A similar verification shows that $\mathcal{S}_i$ parametrizes $\pm B_i$ with exactly the sign on $B_i$ in formula \eqref{eq:bracket-formula}.  
Thus $(ev_5)_*[e,f]$ maps to $h_*\llbracket \alpha, \beta \rrbracket \in (E^2_{-5, 3(d-1)})^{fr}$.
\end{proof}

\begin{remark}
The right-hand side of Figure \ref{F:cactus-knots-action} illustrates for $d=3$ the cycle detected in Theorem \ref{T:bracket-example}.  More accurately, it depicts $[f, \frac{1}{2}e]$, where $f$ is a trefoil with framing number 0, with two of its three crossings arising as resolved double-points, and $\frac{1}{2}e$ is an unknot with framing number 1.  The factor of $\frac{1}{2}$ arises because the image of $\pi_2S^2 \to \pi_1 SO(2)$ is spanned by twice a generator of the target group, which corresponds to an unknot with framing number 2.  This interpretation of $e$ aligns with Remark \ref{R:framed-unknot}.

This figure also shows evaluation at 5 points, albeit not quite at the configurations relevant to the homology calculations in the proof of Theorem \ref{T:bracket-example}.  The terms $B_1$ and $B_2$ correspond to two knots almost as in the SW and SE corners respectively.  The corresponding 5-point configurations would have one point on a red strand above (respectively below) the trefoil, while the other 4 points would be on the blue part, with 2 points at each of the 2 crossings arising from resolution of double-points.  The terms $B_3$ and $B_4$ correspond to knots  almost as in the NW corner, and $B_5$ and $B_6$ to knots almost as in the NE corner.  The corresponding 5-point configuration would have one point on a blue strand above (respectively below) the red portion, 2 others at the crossing in the red knot, and the remaining 2 at the other resolved double-point in the blue trefoil.
\end{remark}

We can now finish the proof of our last main result.

\begin{proof}[Proof of Theorem \ref{T:C}]
It suffices to show that $h_*\llbracket \alpha, \beta \rrbracket$ yields a nontrivial class in $(E^\infty_{-5, 3(d-1)})^{fr}$, for then the cycle $[e,f]$ must be nontrivial in homology.
We will consider the subgroups generated by $\llbracket \alpha, \beta \rrbracket$ and its images under $h_*$ and $p_*h_*$, and we will use the maps $\barE^r_{-p,q} \xrightarrow{h_*} (E^r_{-p,q})^{fr} \xrightarrow{p_*} E^r_{-p,q}$.
  
By calculations of Turchin \cite[(2.9.21)]{Turchin:sur-l'homologie}, $\barE^2_{-5,3(d-1)}$ is generated over $\Q$ by $\llbracket \alpha, \beta \rrbracket$ and $\alpha \llbracket \alpha, \alpha \rrbracket$; see also work of Salvatore \cite[Proof of Theorem 2]{Salvatore:Knots}.  In particular, $\llbracket \alpha, \beta \rrbracket$ is nontrivial in $\barE^2_{-5,3(d-1)}$.  
(Alternatively, we verified that $\llbracket \alpha, \beta \rrbracket$ is not in the image of $d^1$, using a formula from Pelatt's Ph.D.~thesis \cite[Theorem 3.1, Example 3.9]{Pelatt:PhD}.  It is a $d^1$-cycle because $\llbracket \cdot, \cdot \rrbracket$ is compatible with the differential.  It was insufficient to compute the image of $d^1$ modulo 2, consistent with Turchin's calculation that $\llbracket \alpha, \beta \rrbracket$ is twice a generator over $\Z$.)
From the same references, we know that $E^2_{-5,3(d-1)}$ is generated over $\Q$ by $p_*h_*\llbracket \alpha,\beta \rrbracket$, which is hence nontrivial.  Thus $h_*\llbracket \alpha,\beta \rrbracket \in (E^2_{-5,3(d-1)})^{fr}$ is also nontrivial.  

We now use $\barE^r_{-p,q}$ and $E^r_{-p,q}$ to rule out nonzero differentials involving $h_*\llbracket \alpha,\beta \rrbracket$.    
The fact that any outgoing differential from $\barE^r_{-5,3(d-1)}$ lands below the vanishing line rules out nonzero differentials from $h_*\llbracket \alpha,\beta \rrbracket$.
(On the other hand, $(E^r_{-p,q})^{fr}$ has a lower vanishing line, due to low-degree homology in $O(d)$.)
Because of the low bidegree, only a handful of incoming differentials need be considered.  
One can verify that all but one come from trivial groups in $E^r_{-p,q}$, which implies that those differentials are zero in 
$(E^r_{-p,q})^{fr}$.  The remaining one is $d^2: (E^2_{-3,5})^{fr} \to (E^2_{-5,6})^{fr}$ for $d=3$, which actually has a trivial source in $(E^2_{-3,5})^{fr}$.
(Alternatively, one could use the collapse of $\barE^r_{p,q}$ and $E^r_{p,q}$ at the second page for both $d>3$ \cite{LTV} and $d=3$ \cite{Songhafouo:2013, Moriya:2015}.)
Since $h_* \llbracket \alpha, \beta \rrbracket$ is nontrivial at $(E^2)^{fr}$ and supports no nonzero differentials, it is nontrivial in $(E^\infty_{-5, 3(d-1)})^{fr}$.  
\end{proof}

\section{Future directions}
\label{S:future}

We now briefly describe a possible future direction (in addition to the purely operadic ones indicated in Conjecture \ref{conj:action-on-Omega-G} and Question \ref{Q:infinity-operad} at the end of Section \ref{S:projection-onto-Cact^1}).
It concerns extensions of Theorem \ref{T:C} to classes of higher dimensions.
More specifically, one could use Theorem \ref{T:browder-compatible-space-level} to detect the nontriviality of Browder brackets of further classes in $H_*\K_d^{fr}$.  Classes $a$ and $b$ in dimensions that are multiples of $d-3$ can be obtained by resolving singular knots with $k$ double points, as done by Cattaneo, Cotta-Ramusino, and Longoni \cite{CCRL:AGT}.  These would correspond to expressions $\alpha$ and $\beta$ of the form $[x_{i_1}, x_{j_1}] \dots [x_{i_k}, x_{j_k}]$.  (In more recent work, Kosanovi\'c \cite{Kosanovic:graspers} also realizes classes in these dimensions via graspers.)  One could use Theorem \ref{T:browder-compatible-space-level} to show that the Browder bracket $[a,b]$ maps to $\llbracket \alpha, \beta \rrbracket$ in the homology spectral sequence, as in Theorem \ref{T:bracket-example}.  The least immediate step would be showing that $\llbracket \alpha, \beta \rrbracket$ is nontrivial 
in the spectral sequence.  It could be completed by applying the formula of Pelatt \cite{Pelatt:PhD} or emulating calculations of Turchin.

Such a class $[a,b]$ lies in a dimension $k(d-3)+1$, where few explicit constructions of nontrivial classes are known.
Nonetheless, calculations of Turchin \cite[Appendix B, Table 5]{Turchin:Bialgebra} and of Scannell and Sinha \cite[Table 2]{Scannell-Sinha} show that for even $d \geq 4$, $H_{5d-14}(\K^{fr}_d; \Q)$ is 1-dimensional, hence possibly generated by the bracket of known classes in dimensions $2(d-3)$ and $3(d-3)$.  For odd $d\geq 5$, Turchin's calculations \cite[Appendix B, Table 6]{Turchin:Bialgebra} show that $H_{4d-11}(\K^{fr}_d; \Q)$ is 2-dimensional (respectively, $H_{5d-14}(\K^{fr}_d; \Q)$ is 3-dimensional).  In this case, each of $H_{k(d-3)}(\K^{fr}_d; \Q)$ is nontrivial for all $k=1,\dots,4$ and the bracket of any two classes for which the values of $k$ add up to 4 (respectively, 5) lies in this group.

\bibliographystyle{alpha}   
\bibliography{refs}

\end{document}